\newcommand{\mylabel}[2]{#2\def\@currentlabel{#2}\label{#1}}
\newtheorem{theorem}[equation]{Theorem}
\newtheorem{lemma}[equation]{Lemma}
\newtheorem{proposition}[equation]{Proposition}
\theoremstyle{definition}
\newtheorem{example}[equation]{Example}
\newtheorem{definition}[equation]{Definition}
\newtheorem*{Result}{Theorem}
\numberwithin{equation}{section}
\newcommand{\R}{\mathbb{R}}
\newcommand{\Q}{\mathbb{Q}}
\newcommand{\cX}{\mathcal{X}}
\newcommand{\Vol}{\operatorname{Vol}}
\newcommand{\vol}{\mathrm{vol}}
\newcommand{\N}{\mathrm{N}} 
\newcommand{\Hess}{\mathrm{Hess}}
\newcommand{\bSigma}{{\bm{\Sigma}}}
\newcommand{\bX}{\bm{\mathcal{X}}}
\renewcommand{\phi}{\varphi}
\begin{document}
\title{Lorentzian fans}
\author[D.~Ross]{Dustin Ross}
\address{Department of Mathematics, San Francisco State University}
\email{rossd@sfsu.edu}

\begin{abstract}
We introduce the notion of Lorentzian fans, which form a special class of tropical fans that are particularly well-suited for proving Alexandrov--Fenchel type inequalities. To demonstrate the utility of Lorentzian fans, we prove a practical characterization of them in terms of their two-dimensional star fans. We also show that Lorentzian fans are closed under many common tropical fan operations, and we discuss how the Lorentzian property descends to the underlying tropical variety, allowing us to deduce Alexandrov--Fenchel type inequalities in the general setting of tropical intersection theory on tropical fan varieties.
\end{abstract}

\maketitle

\section{Introduction}

This paper investigates an analogue of the Alexandrov--Fenchel (AF) inequalities in the setting of tropical fans. More precisely, given a tropical $d$-fan $\bSigma=(\Sigma,\omega)$, comprised of a simplicial $d$-fan $\Sigma$ and a positive Minkowski $d$-weight $\omega$, there is an open cone $K(\Sigma)\subseteq D(\Sigma)$ of strictly convex divisors, and for $D_\heartsuit,D_\diamondsuit,D_3,\dots,D_d\in K(\Sigma)$, we aim to understand when their mixed degrees satisfy the following inequalities
\begin{equation}
\tag{AF}\deg_{\bSigma}(D_\heartsuit D_\diamondsuit D_3\cdots D_d)^2\geq \deg_\bSigma(D_\heartsuit^2D_3\cdots D_d)\cdot \deg_\bSigma(D_\diamondsuit^2D_3\cdots D_d).
\end{equation}
In the setting of complete fans, these inequalities are a special case of the classical Alexandrov--Fenchel inequalites for mixed volumes of polytopes, but recent developments have suggested the importance of generalizing these inequalities beyond the complete setting. For example, the celebrated resolution of the Heron--Rota--Welsh conjecture by Adiprasito, Huh, and Katz \cite{AdiprasitoHuhKatz} proceeds by way of proving AF inequalites in the setting where $\Sigma$ is the (incomplete, in general) Bergman fan of a matroid.

The ingenious approach to AF inequalities developed by Adiprasito, Huh, and Katz relied on building a notion of \emph{combinatorial Hodge theory} in the setting of Bergman fans, and the structure they developed has since been further refined and generalized in a number of ways \cite{BHMPW,BHMPW2,ArdilaDenhamHuh,AP1,AP2}. While the Hodge-theoretic approach has been powerful and illuminating,  it is also a much bigger hammer than is necessary if one is simply interested in AF inequalities, which follow from just one small part of the so-called K\"ahler package developed in \cite{AdiprasitoHuhKatz}. With an aim of developing more refined tools for studying AF inequalities, Br\"and\'en and Huh \cite{BrandenHuh} and Br\"and\'en and Leake \cite{BrandenLeake} have recently developed the notion of \emph{Lorentzian polynomials}, which hone in on the specific part of the K\"ahler package that is most relevant to AF inequalities. This paper builds on these recent developments of Lorentzian polynomials by introducing a notion of \emph{Lorentzian fans}, which we propose as a refined tool that gets at the heart of AF inequalities in the setting of tropical fans. As a tool for demonstrating AF inequalities in the setting of tropical fans, our thesis is that  the theory of Lorentzian fans introduced here is simple to employ and applies in settings that were previously inaccessible using combinatorial Hodge theory techniques.

To motivate the definition of Lorentzian fans, we make two important observations. First, we note that it has long been understood---indeed, it is already apparent in Aleksandrov's original proof \cite{Aleksandrov}---that a particularly useful way to prove AF inequalities is to show that, for all $D_3,\dots,D_d\in K(\Sigma)$, the quadratic form
\begin{align*}
\Phi:D(\Sigma)\times D(\Sigma)&\rightarrow\R\\
\nonumber (D_1,D_2)&\mapsto\deg_\bSigma(D_1 D_2 D_3\cdots D_d)
\end{align*}
has exactly one positive eigenvalue. To see why this implies the AF inequalities, consider the following $2\times 2$ principal minor of $\Phi$ associated to a fixed pair $D_\heartsuit,D_\diamondsuit\in K(\Sigma)$:
\[
M=\left[
\begin{array}{cc}
\deg_\bSigma(D_\heartsuit^2D_3\cdots D_d) & \deg_\bSigma(D_\diamondsuit D_\heartsuit D_3\cdots D_d)\\
\deg_\bSigma(D_\heartsuit D_\diamondsuit D_3\cdots D_d) & \deg_\bSigma(D_\diamondsuit^2D_3\cdots D_d)
\end{array}
\right].
\]
If $\Phi$ has exactly one positive eigenvalue, then Cauchy's interlacing theorem implies that $M$ has \emph{at most one positive eigenvalue}. On the other hand, elementary computations with the characteristic polynomial imply that $M$ has \emph{at least one positive eigenvalue}, simply because it is a symmetric $2\times 2$ matrix with positive entries. Thus, $M$ has \emph{exactly one positive eigenvalue}, implying that its determinant is nonpositive, and the AF inequalites follow.

The second observation that motivates us is that most proofs of the AF inequalities in the classical polytope setting---again, Aleksandrov's original proof is an example---use an induction argument, where the induction step reduces dimension by computing mixed volumes of polytopes in terms of the mixed volumes of their facets. Since faces of polytopes translate to stars of normal fans, this suggests that any study of AF inequalities in a more general tropical fan setting should be stable under taking star fans. Importantly, we note that if $\bSigma=(\Sigma,\omega)$ is a tropical fan and $\tau\in\Sigma$ is a cone, we can aways endow the star fan $\Sigma^\tau$ with a compatible tropical structure $\omega^\tau$ (only determined up to positive scaling).

From the two observations above, we now arrive at the definition of a \textbf{Lorentzian fan} (Definition~\ref{def:lorentzian}): it is a tropical fan $\bSigma$ with $K(\Sigma)\neq\emptyset$ such that the quadratic forms $\Phi$ associated to all of the stars $\bSigma^\tau$ have exactly one positive eigenvalue. The discussion above proves that mixed degrees of strictly convex divisors on Lorentzian fans satisfy AF inequalities (Proposition~\ref{prop:tropicalfanAF}), but what is not clear is whether the Lorentzian property is any easier to study than the AF inequalities themselves. Our main result is a practical characterization of Lorentzian fans, reducing the defining condition to the case of $2$-dimensional star fans.

\begin{Result}[Theorem~\ref{thm:characterization}]
A tropical fan $\bSigma$ with $K(\Sigma)\neq\emptyset$ is Lorentzian if and only if
\begin{enumerate}
\item[(A)] $\Sigma^\tau\setminus\{0\}$ is connected for every cone $\tau$ of codimension at least $2$, and
\item[(B)] every $2$-dimensional tropical star fan $\bSigma^\tau$ is Lorentzian.
\end{enumerate}
\end{Result}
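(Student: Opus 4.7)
The plan is to handle the two directions separately, with the backward direction proceeding by induction on $d=\dim\bSigma$. For the forward direction, condition (B) is almost immediate: if $\tau$ has codimension $2$, then $\bSigma^\tau$ is $2$-dimensional, and its own stars (indexed by cones $\tau'\supseteq\tau$) are stars of $\bSigma$, all of which have $\Phi$ with one positive eigenvalue by hypothesis; hence $\bSigma^\tau$ is Lorentzian. For condition (A), I argue by contradiction: suppose that for some $\tau$ of codimension at least $2$, the punctured support $\Sigma^\tau\setminus\{0\}$ decomposes as a disjoint union $\Sigma_1\sqcup\Sigma_2$. Fix $L\in K(\Sigma^\tau)$ and write $L=L_1+L_2$ by support. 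Since no cone of $\Sigma^\tau$ can span both components, divisor products vanish on disjoint supports: $L_1^a L_2^b=0$ whenever $a,b\geq 1$. In particular $L_i^2\, L^{d'-2}=L_i^{d'}$ and $L_1 L_2\, L^{d'-2}=0$, where $d'=\dim\bSigma^\tau$. Strict convexity of each $L_i$ on the standalone fan $\Sigma_i$ gives $\deg(L_i^{d'})>0$, so the matrix of $\Phi$ on $\mathrm{span}(L_1,L_2)$ is positive diagonal with two positive eigenvalues, contradicting the Lorentzian property of $\bSigma^\tau$.

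The backward direction proceeds by induction on $d$. The base case $d=2$ is condition (B) applied at $\tau=0$. For $d\geq 3$, conditions (A) and (B) are inherited by every proper star $\bSigma^\tau$ (since stars of stars are stars of $\bSigma$, with codimensions preserved), so the inductive hypothesis shows that each such $\bSigma^\tau$ is Lorentzian. It then remains only to check that $\Phi$ on $\bSigma$ itself has exactly one positive eigenvalue, for every choice of $D_3,\dots,D_d\in K(\Sigma)$. The strategy here is a continuity argument: fix a reference class $L\in K(\Sigma)$ and connect $(D_3,\dots,D_d)$ to the diagonal $(L,\dots,L)$ by a line segment in the convex cone $K(\Sigma)^{d-2}$, yielding a continuous family of quadratic forms $\Phi_t$. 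The signature of $\Phi_t$ can change only where an eigenvalue crosses zero, i.e., where the form becomes more degenerate. I would rule this out via a Hard Lefschetz statement in degree $1$ on $\bSigma$: any kernel element $D_0$ of $\Phi_t$ satisfies $\deg(D_0\cdot E\cdot D_3(t)\cdots D_d(t))=0$ for all $E\in D(\Sigma)$, and applying this with $E$ ranging over classes pulled back from each ray $\rho$, the Hodge--Riemann property on $\bSigma^\rho$ (available by induction) pins $D_0$ down to a fixed, predictable radical. Condition (A) enters to ensure that this radical does not fragment across disconnected components of lower-codimension stars. With the signature preserved along the deformation, the eigenvalue count reduces to the diagonal case $\Phi_L(D_1,D_2)=\deg(D_1 D_2\, L^{d-2})$, whose one-positive-eigenvalue property follows from a Lefschetz decomposition supported by condition (B).

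The technical heart of the argument, and the main expected obstacle, is the Hard Lefschetz / kernel-control step. Packaging the codimension-$1$ Hodge--Riemann information into a rigorous bound on the radical of $\Phi_t$, in a way that interacts correctly with the connectivity hypothesis (A), is where the proof becomes delicate; it amounts to a tropical-fan analogue of a small but essential part of the Kähler package of Adiprasito--Huh--Katz, now driven solely by the $2$-dimensional data provided by (B).
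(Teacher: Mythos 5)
Your forward direction is sound and matches the paper's in spirit: (B) is immediate because stars of stars are stars, and for (A) you observe that a disconnected punctured star $\Sigma^\tau\setminus\{0\}$ lets you build a $2\times 2$ positive-definite principal submatrix of $\Phi$ from the convex divisors supported on each component, yielding two positive eigenvalues. (The paper's Lemma~\ref{lem:unpinched} makes the same point by showing the sequence $\deg(\tilde D_1^k\tilde D_2^{d-k})$ is not unimodal; these are equivalent.)

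The backward direction has a genuine gap, and it is located exactly where you flag it. You propose to preserve the signature of $\Phi_t$ along a deformation by controlling $\ker\Phi_t$ via a ``Hard Lefschetz statement in degree $1$'' together with Hodge--Riemann on the stars $\bSigma^\rho$. But a Hard Lefschetz property is not available in this generality: Lorentzian fans need not satisfy Poincar\'e duality, and the paper's Example~\ref{ex:lorentziannotlefschetz} exhibits a Lorentzian $2$-fan whose top Chow group is $3$-dimensional, hence not Lefschetz in the sense of \cite{ArdilaDenhamHuh}. Taking products, one gets non-Lefschetz Lorentzian fans in every dimension $\geq 2$, so any argument that leans on a Lefschetz decomposition (or on identifying $\ker\Phi_t$ with a predictable primitive radical via Poincar\'e duality) will fail on precisely the fans this theorem is designed to cover. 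Moreover, the inductive hypothesis only gives the one-positive-eigenvalue statement on the stars, which is strictly weaker than the Hodge--Riemann relations you would need for the kernel-pinning argument.

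The paper's route sidesteps Hard Lefschetz entirely. After translating the Lorentzian condition into the language of $C$-Lorentzian polynomials (Proposition~\ref{prop:lorentzians}), the induction step invokes Br\"and\'en--Leake's Proposition 2.4 (Lemma~\ref{lem:BrandenLeake}), whose hypotheses are: invariance under the lineality space, positivity of top mixed derivatives, the one-positive-eigenvalue property of each $\partial_\rho\Vol$, and---crucially---that the Hessian of $\partial_{z_1}\cdots\partial_{z_{d-2}}\Vol$ has nonnegative off-diagonal entries and is \emph{irreducible}. The fourth hypothesis is exactly where the inductive hypothesis on $\bSigma^\rho$ enters, via the star-fan reduction \eqref{eq:reduction}, and the irreducibility is exactly where condition (A) enters: the off-diagonal $(\rho,\eta)$-entries are positive iff $\{\rho,\eta\}$ spans a $2$-cone, so irreducibility of the Hessian is equivalent to connectedness of $|\Sigma|\setminus\{0\}$. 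This replaces your proposed kernel-control step with a hypothesis that is checkable from $2$-dimensional data alone and that does not presuppose Poincar\'e duality. If you want to keep your deformation intuition, the right object to look at is how Br\"and\'en and Leake prove their Proposition 2.4; but simply citing Hard Lefschetz as the mechanism does not work here.
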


We note that (A) is a mild connectedness assumption on $\Sigma$, disallowing, for example, fans that are locally a pair of $d$-dimensional cones meeting along a cone of dimension less than $d-1$, while (B) is completely concrete to check in practice. In particular, since (B) only concerns $2$-dimensional star fans, there is only one quadratic form $\Phi$ associated to $\bSigma^\tau$, independent of choosing any strictly convex divisors. Thus, verifying (B) amounts simply to computing the eigenvalues of a finite set of explicitly computable matrices. As a proof of concept, we note that (A) and (B) are relatively straightforward to verify for Bergman fans of matroids (Theorem~\ref{thm:Bergman}, see also \cite{BackmanEurSimpson,BrandenLeake,NowakOMelvenyRoss}).

In addition to the above characterization of Lorentzian fans, this paper develops a number of additional properties that make Lorentzian fans a particularly convenient class of tropical fans to work with; for example Lorentzian fans are closed under the operations of (i) passing to star fans, (ii) taking products, (iii) acting by strictly convex divisors, (iv) taking tropical modifications along strictly convex divisors, and (v) changing the fan structure on $\Sigma$. We also show that it is not hard to derive examples of Lorentzian fans that do not satisfy the suite of properties included in the K\"ahler package (Example~\ref{ex:lorentziannotlefschetz}); these are fans for which the theory of Lorentzian fans is applicable but the theory of combinatorial Hodge theory is not. Additionally, we note that Lorentzian fans are a strict subset of tropical fans: there exist examples of tropical fans that fail to be Lorentzian (Example~\ref{ex:tropicalnotlorentzian}, from \cite{BabaeeHuh}).

It is worth elaborating on point (v) in the previous paragraph and making a few observations regarding our conventions. A key principle of tropical geometry is that, given a tropical structure on a fan, it induces a canonical tropical structure on any fan with the same support, allowing one to define a \emph{tropical fan variety} as an equivalence class of tropical fans \cite{AllermanRau}. While tropical fans are often assumed to be rational, we do not make that assumption in this paper, and we instead work with simplicial, possibly irrational fans. However, as we discuss in Section~\ref{sec:variety}, the key observations regarding rational tropical varieties can be extended in a natural way to the simplicial, irrational setting, and this allows one to define tropical fan varieties in the irrational setting, as well. In this general setting, we define a \textbf{Lorentzian fan variety} to be a tropical fan variety for which at least one---and thus, by (v) above, all---of its representatives are Lorentzian fans. We prove (Theorem~\ref{thm:fansetAF}) that mixed degrees of divisors on Lorentzian fan varieties satisfy the AF inequalities, which demonstrates that Lorentzian fan varieties have a natural place within tropical intersection theory. In particular, these observations inspire further developments of the Lorentzian property for more general tropical varieties beyond those supported on fan sets.

\subsection*{Contents}

In Section~\ref{sec:setup}, we establish the background required to study mixed degrees of divisors on tropical fans, including Chow rings, Minkowski weights, and piecewise linear functions. There is some novelty in how we deal with the markings on the fans, given that our fans are not assumed to be rational. In Section~\ref{sec:lorentzianfans}, we give a precise definition of Lorentzian fans and relate them to the notion of Lorentzian polynomials on cones. Section~\ref{sec:characterization} is devoted to the statement and the proof of the two-dimensional characterization of Lorentzian fans. We then collect a number of results in Section~\ref{sec:properties} that describe common fan operations under which Lorentzian fans are closed. In Section~\ref{sec:variety}, we develop tropical fan varieties, following Allerman and Rau \cite{AllermanRau}, although our fans are not assumed to be rational, and we conclude the paper in Section~\ref{sec:lorentzianfansets} with the introduction and development of Lorentzian fan varieties.

\subsection*{Related work}

While in the final stages of preparing this paper, the author became aware that Petter Br\"and\'en and Jonathan Leake had independently been developing a project on hereditary Lorentzian polynomials on cones, and many of the results proved here are a consequence of their more general theory. We collectively agreed to post the two papers simultaneously, and it is this author's hope that this paper, written through the geometric lens of tropical fans and tropical intersection theory, will serve as a useful companion to the general polynomial theory developed in the work of Br\"and\'en and Leake.


\subsection*{Gratitude}

The author warmly acknowledges Anastasia Nathanson, Lauren Nowak, and Patrick O'Melveny for many insights that they shared while working on related collaborations \cite{NathansonRoss,NowakOMelvenyRoss}, and Federico Ardila, Emily Clader, Chris Eur, and Matt Larson for enlightening conversations that influenced this project. The author is grateful to Petter Br\"and\'en and Jonathan Leake for explaining their related project to him, and for the kindness they demonstrated while navigating the overlap between the two projects. This work is partially supported by a grant from the National Science Foundation: DMS--2001439.

\section{Algebraic structures on simplicial fans}\label{sec:setup}

In this section, we present an introduction to simplicial fans and natural algebraic structures associated to them, with an eye toward establishing notational conventions and the particular properties that will be relevant to our development of Lorentzian fans. Our primary aim is to introduce mixed degrees of divisors on tropical fans, interpreting them both algebraically using Chow rings and more geometrically using the action of piecewise linear functions on Minkowski weights. While we are unaware of a precise reference for these topics in the setting of simplicial, possibly irrational fans, the well-known properties from the rational setting carry through in a straightforward manner. Where appropriate, we provide references for justifications in the rational setting, but our goal in this introductory section is to present the material in such a way that all assertions can be viewed as practical exercises for a learner with some prior familiarity with algebra and polyhedral geometry.

\subsection{Fan conventions}

Let $V$ be an $n$-dimensional vector space over $\R$ with dual space $V^\vee$. For any $\phi\in V^\vee$, the associated \textbf{hyperplane} and \textbf{halfspace} in $V$ are defined by
\[
H_\phi=\{u\in V\mid \phi(u)= 0\}\;\;\;\text{ and }\;\;\;H_\phi^-=\{u\in V\mid \phi(u)\leq 0\},
\]
respectively. A \textbf{polyhedral cone} in $V$ is a finite intersection of halfspaces. 

Let $\sigma\subseteq V$ be a polyhedral cone. The \textbf{span} of $\sigma$, denoted $V_\sigma$, is the smallest subspace of $V$ containing $\sigma$. The \textbf{dimension} of $\sigma$, denoted $\dim(\sigma)$, is the vector space dimension of $V_\sigma$. The \textbf{relative interior} of $\sigma$, denoted $\sigma^\circ$, is the topological interior of $\sigma$ as a subset of $V_\sigma$. A \textbf{face} of $\sigma$ is any subset of the form
\[
\sigma\cap H_\phi\;\;\;\text{ where }\;\;\;\sigma\subseteq H_\phi^-.
\]
We use $\preceq$ to denote the face containment relation, and we denote the $k$-dimensional faces of $\sigma$ by $\sigma(k)$. We say that $\sigma$ is \textbf{strongly convex} if it does not contain a nonzero subspace of $V$, and we say that $\sigma$ is \textbf{simplicial} if it is strongly convex and $\dim(\sigma)=|\sigma(1)|$. A \textbf{fan in $V$} is a finite set $\Sigma$ of strongly convex polyhedral cones in $V$ such that
\begin{enumerate}
\item if $\sigma\in\Sigma$ and $\tau\preceq\sigma$, then $\tau\in\Sigma$, and
\item if $\sigma_1,\sigma_2\in\Sigma$, then $\sigma_1\cap\sigma_2\preceq \sigma_1$ and $\sigma_1\cap\sigma_2\preceq\sigma_2$.
\end{enumerate}

Let $\Sigma$ be a fan in $V$. The collection of $k$-dimensional cones of $\Sigma$ is denoted $\Sigma(k)$, and the \textbf{$k$-skeleton} of $\Sigma$, denoted $\Sigma[k]$, is the fan comprised of all cones in $\Sigma$ that have dimension at most $k$. We say that $\Sigma$ is \textbf{simplicial} if all of its cones are simplicial. We say that $\Sigma$ is \textbf{pure} if every inclusion-maximal cone of $\Sigma$ has the same dimension, and we say that $\Sigma$ is a \textbf{$d$-fan} if it is pure of dimension $d$. The \textbf{support} of $\Sigma$, denoted $|\Sigma|$, is the union of all $\sigma\in\Sigma$. We say that a fan $\Sigma_1$ is a \textbf{refinement} of $\Sigma_2$ if they have the same support and every cone of $\Sigma_1$ is a subset of some cone of $\Sigma_2$. A \textbf{marking} of $\Sigma$ is a choice of vector $u_\rho\in\rho^\circ$ for every $\rho\in\Sigma(1)$; we denote a choice of marking by $u=(u_\rho)_{\rho\in\Sigma(1)}$.

\subsection{Chow rings of simplicial fans}

Let $\Sigma$ be a simplicial $d$-fan in $V$ with marking $u$. The \textbf{Chow ring of $(\Sigma,u)$} is defined by
\[
A^\bullet(\Sigma,u)= \frac{\R\big[x_\rho\;|\;\rho\in\Sigma{(1)}\big]}{I_\Sigma+J_{\Sigma,u}}
\]
where
\[
I_\Sigma = \big\langle x_{\rho_1}\cdots x_{\rho_k}\mid\{\rho_1,\dots,\rho_k\}\not\subseteq\sigma\text{ for any }\sigma\in\Sigma\big\rangle
\]
and
\[
J_{\Sigma,u} = \bigg\langle \sum_{\rho\in\Sigma{(1)}}\phi(u_\rho) x_\rho\;\bigg|\;\phi\in V^\vee\bigg\rangle.
\]
As both $I_\Sigma$ and $J_{\Sigma,u}$ are homogeneous, the Chow ring $A^\bullet(\Sigma,u)$ is a graded ring, and we denote by $A^k(\Sigma,u)$ the subgroup of homogeneous elements of degree $k$. The algebra generators of $A^\bullet(\Sigma,u)$ are denoted $X_{u,\rho}= [x_{u,\rho}]\in A^1(\Sigma,u)$, and we extend this notation to any cone $\sigma\in\Sigma(k)$ by defining the associated \textbf{cone monomial}
\[
X_{u,\sigma} = \prod_{\rho\in\sigma(1)}X_{u,\rho}\in A^k(\Sigma,u).
\]
It follows from the simplicial hypothesis that $A^k(\Sigma,u)$ is spanned by cone monomials \cite[Proposition 5.5]{AdiprasitoHuhKatz}, and it then follows from the $d$-fan hypothesis that $A^k(\Sigma,u)=0$ for all $k>d$.

We say that two Chow classes $X\in A^\bullet(\Sigma,u)$ and $X'\in A^\bullet(\Sigma,u')$ are \textbf{equivalent} if there exists $\lambda\in\R_{>0}^{\Sigma(1)}$ such that 
\[
u'=\lambda u\;\;\;\text{ and }\;\;\;X'=\lambda^{-1}\cdot X,
\]
where the action on the right is defined on generators by $\lambda^{-1}\cdot X_{u,\rho}=\lambda_\rho^{-1}X_{u',\rho}$. A \textbf{Chow class on $\Sigma$} is an equivalence class of pairs $(X,u)$ where $X\in A^\bullet(\Sigma,u)$, and the \textbf{Chow ring of $\Sigma$}, denoted $A^\bullet(\Sigma)$, is the ring of Chow classes on $\Sigma$. For any choice of marking $u$, there is a canonical isomorphism $A^\bullet(\Sigma)\cong A^\bullet(\Sigma,u)$.

\subsection{Minkowski weights}\label{subsec:minkowskiweights}

Let $\Sigma$ be a simplicial $d$-fan in $V$ with marking $u$. A \textbf{Minkowski $k$-weight on $(\Sigma,u)$} is a function $\omega:\Sigma(k)\rightarrow\R$ such that, for every $\tau\in\Sigma(k-1)$, we have the following balancing condition
\begin{equation}\label{eq:balancingcondition}
\sum_{\sigma\in\Sigma(k)\atop \tau\prec\sigma}\omega(\sigma)u_{\sigma\setminus\tau}\in V_\tau,
\end{equation}
where $\sigma\setminus\tau\in\Sigma(1)$ represents the unique ray in $\sigma(1)\setminus\tau(1)$. We denote the vector space of Minkowski $k$-weights on $(\Sigma,u)$ by $MW_k(\Sigma,u)$ and the associated graded vector space by
\[
MW_\bullet(\Sigma,u)=\bigoplus_{k=0}^d MW_k(\Sigma,u).
\]
The balancing condition on a Minkowski $k$-weight $\omega\in MW_k(\Sigma,u)$ is sufficient and necessary for there to exist a linear map $f_{\Sigma,u,\omega}:A^k(\Sigma,u)\rightarrow\R$ such that $f_{\Sigma,u,\omega}(X_{u,\sigma})=\omega(\sigma)$ for every $\sigma\in\Sigma(k)$ \cite[Proposition~5.6]{AdiprasitoHuhKatz}, and this gives an isomorphism of graded vector spaces
\begin{align*}
MW_\bullet(\Sigma,u)&\stackrel{\cong}{\longrightarrow} A^\bullet(\Sigma,u)^\vee\\
\omega&\longmapsto f_{\Sigma,u,\omega}.
\end{align*}

We say that two Minkowski $k$-weights $\omega\in MW_k(\Sigma,u)$ and $\omega'\in MW_k(\Sigma,u')$ are \textbf{equivalent} if there exists $\lambda\in\R_{>0}^{\Sigma(1)}$ such that 
\[
u'=\lambda u\;\;\;\text{ and }\;\;\;\omega'=\lambda^{-1}\cdot \omega
\]
where, for every $\tau\in\Sigma(k)$, we define
\[
(\lambda^{-1}\cdot \omega)(\tau)=\big(\prod_{\rho\in\tau(1)}\lambda_\rho^{-1}\big)\omega(\tau).
\]
A \textbf{Minkowski $k$-weight on $\Sigma$} is an equivalence class of pairs $(\omega,u)$ where $\omega\in MW_\bullet(\Sigma,u)$, and the \textbf{vector space of Minkowski weights on $\Sigma$}, denoted $MW_\bullet(\Sigma)$, comprises all Minkowski weights on $\Sigma$. For any $u$, there is a canonical isomorphism $MW_\bullet(\Sigma)\cong MW_\bullet(\Sigma,u)$, and the vector space isomorphism $MW_\bullet(\Sigma,u)\cong A^\bullet(\Sigma,u)$ for any marking $u$ induces a canonical isomorphism 
\begin{align*}
MW_\bullet(\Sigma)&\stackrel{\cong}{\longrightarrow} A^\bullet(\Sigma)^\vee\\
\omega&\longmapsto f_{\Sigma,\omega}.
\end{align*}

\subsection{Tropical divisors}

Let $\Sigma$ be a simplicial $d$-fan in $V$. A \textbf{piecewise linear function on $\Sigma$} is a continuous function $\phi:|\Sigma|\rightarrow \R$ such that, for every $\sigma\in\Sigma$, there is a linear map $\phi_\sigma\in V^\vee$ such that $\phi|_\sigma=\phi_\sigma$. Let $PL(\Sigma)$ denote the vector space of piecewise linear functions on $\Sigma$, and let $L(\Sigma)\subseteq PL(\Sigma)$ denote the subspace of linear functions---that is, restrictions to $|\Sigma|$ of linear functions $\phi\in V^\vee$. The \textbf{vector space of divisors on $\Sigma$} is the quotient
\[
D(\Sigma)=\frac{PL(\Sigma)}{L(\Sigma)}.
\]

Upon choosing a marking $u$ of $\Sigma$, a piecewise linear function on $\Sigma$ is determined uniquely by its values at the marks. Let $D_{u,\rho}\in D(\Sigma)$ denote the class of the piecewise linear function that takes value $1$ at $u_\rho$ and value $0$ at $u_\eta$ for $\eta\neq\rho$. Then $\{D_{u,\rho}\mid\rho\in\Sigma(1)\}$ spans $D(\Sigma)$, and there is a vector space isomorphism
\begin{align*}
D(\Sigma)&\stackrel{\cong}{\longrightarrow}A^1(\Sigma,u)\\
D_{u,\rho}&\longmapsto X_{u,\rho}.
\end{align*}
These isomorphisms are compatible with scaling the markings, so they descend to a canonical isomorphism $D(\Sigma)\cong A^1(\Sigma)$; given a divisor $D\in D(\Sigma)$, let $X_D\in A^1(\Sigma)$ denote the corresponding Chow class.

The vector space $A^\bullet(\Sigma,u)^\vee$ can naturally be viewed as an $A^1(\Sigma,u)$-module via the action
\begin{align*}
A^1(\Sigma,u)\times A^\bullet(\Sigma,u)^\vee&\rightarrow A^{\bullet-1}(\Sigma,u)^\vee\\
(X,f)&\mapsto f(X\cdot-)
\end{align*}
where $ f(X\cdot-)$ simply precomposes the map $f$ with multiplication by $X$. Viewed instead as an action of $D(\Sigma)$ on $MW_\bullet(\Sigma,u)$, one carefully traces back through the vector space isomorphisms to see that the action of $A^1(\Sigma,u)$ on $A^\bullet(\Sigma,u)^\vee$ induces the action
\begin{align*}
D(\Sigma)\times MW_\bullet(\Sigma,u)&\rightarrow MW_{\bullet-1}(\Sigma,u)\\
(D,\omega)&\mapsto D\cdot\omega,
\end{align*}
where, for any $\omega\in MW_k(\Sigma,u)$ and $\tau\in\Sigma(k-1)$, we define
\[
(D\cdot\omega)(\tau)=\sum_{\sigma\in\Sigma(k)\atop\sigma\succ\tau}\phi_\sigma\big(\omega(\sigma) u_{\sigma\setminus\tau}\big)-\phi_\tau\Big(\sum_{\sigma\in\Sigma(k)\atop\sigma\succ\tau}\omega(\sigma) u_{\sigma\setminus\tau}\Big),
\]
where $\phi$ is any piecewise linear function representing $D$ \cite{AllermanRau}. Intuitively, one can think of $(D\cdot\omega)(\tau)$ as a measure of the failure of $\phi$ to be linear at $\tau$.

The actions of $A^1(\Sigma,u)$ on $A^\bullet(\Sigma,u)^\vee$ and $D(\Sigma)$ on $MW_\bullet(\Sigma,u)$ are compatible with equivalence of Chow classes and Minkowski weights, so they descend to give canonical actions
\[
A^1(\Sigma)\times A^\bullet(\Sigma)^\vee\rightarrow A^{\bullet-1}(\Sigma)^\vee\;\;\;\text{ and }\;\;\;D(\Sigma)\times MW_\bullet(\Sigma)\rightarrow MW_{\bullet-1}(\Sigma)
\]
that are compatible with the isomorphisms $A^1(\Sigma)\cong D(\Sigma)$ and $A^\bullet(\Sigma)^\vee\cong MW_\bullet(\Sigma)$.

\subsection{Mixed degrees of divisors}\label{subsec:mixeddegrees}

Let $\Sigma$ be a simplicial $d$-fan and let $\omega\in MW_d(\Sigma)$ be a Minkowski $d$-weight. Given divisors $D_1,\dots,D_d\in D(\Sigma)$, notice that $D_1\cdots D_d\cdot\omega$ is a Minkowski $0$-weight, which is nothing more than a function from the origin to $\R$. We define the \textbf{mixed degree of $D_1,\dots,D_d\in D(\Sigma)$ with respect to $(\Sigma,\omega)$} by
\begin{equation}\label{eq:degminkowski}
\deg_{\Sigma,\omega}(D_1\cdots D_d)=(D_1\cdots D_d\cdot\omega)(0)\in\R.
\end{equation}
Equivalently, using the compatibility of the actions in the previous subsection, one may define mixed degrees of divisors Chow-theoretically by
\begin{equation}\label{eq:degchow}
\deg_{\Sigma,\omega}(D_1\cdots D_d)=f_{\Sigma,\omega}(X_{D_1}\cdots X_{D_d})\in\R.
\end{equation}
When working with a specific representative $\omega\in MW_d(\Sigma,u)$ associated to a marking $u$, we will often include $u$ in the notation and write $\deg_{\Sigma,u,\omega}$.

While studying mixed degrees of divisors, it is useful to keep in mind both of the perspectives \eqref{eq:degminkowski} and \eqref{eq:degchow}: the perspective in terms of Minkowski weights is more geometric, related to how piecewise linear functions bend across the faces of $\Sigma$, while the perspective in terms of the Chow ring is more algebraic, computable by using the relations in $A^\bullet(\Sigma)$ to manipulate polynomials in the generators. We also note that the Chow ring perspective is motivated by intersection theory on toric varieties, so this perspective benefits from a wealth of intuition from algebraic geometry, intuition that has certainly influenced these developments but that is not a prerequisite to understanding them.

\subsection{Star fans}

Star fans are a useful tool for computing mixed degrees of divisors recursively in dimension, as we now describe. Suppose that $\Sigma$ is a simplicial $d$-fan in $V$. Given a cone $\tau\in\Sigma$, define the \textbf{neighborhood of $\tau$ in $\Sigma$} by
\[
N_\tau\Sigma = \{\pi \mid \pi\preceq \sigma \text{ for some }\sigma\in\Sigma \text{ with }\tau\preceq\sigma\}.
\]
The \textbf{star of $\Sigma$ at $\tau\in\Sigma$} is the fan in $V^\tau=V/V_\tau$ comprised of all quotients of cones in the neighborhood of $\tau$:
\[
\Sigma^\tau = \{\overline\pi\mid \pi\in N_\tau\Sigma\},
\]
where $\overline\pi\subseteq V^\tau$ denotes the quotient of the cone $\pi\subseteq V$ by $V_\tau$. The assumption that $\Sigma$ is a simplicial $d$-fan implies that $\Sigma^\tau$ is a simplicial $d^\tau$-fan, where $d^\tau=d-\dim(\tau)$. 

Given $\omega\in MW_d(\Sigma,u)$, we obtain a Minkowski weight $\omega^\tau\in MW_{d^\tau}(\Sigma^\tau,u^\tau)$ as follows: 
\begin{itemize}
\item every ray $\eta\in\Sigma^\tau(1)$ is the quotient of a unique ray $\hat\eta\in N_\tau\Sigma(1)$; define $u^\tau_\eta=\overline u_{\hat\eta}$;
\item every maximal cone $\sigma\in\Sigma^\tau(d^\tau)$ is the quotient of a unique maximal cone $\hat\sigma\in N_\tau\Sigma(d)$; define $\omega^\tau(\sigma)=\omega(\hat\sigma)$.
\end{itemize}
Importantly, we note that equivalent Minkowski weights on $\Sigma$ \emph{do not} generally induce equivalent Minkowski weights on $\Sigma^\tau$, so the choice of marking is essential in our discussion of star fans. In particular, if $u_2=\lambda u_1$ are two markings and $\omega_1\in MW_d(\Sigma,u_1)$ and $\omega_2\in MW_d(\Sigma,u_2)$ are equivalent Minkowski weights in $MW_d(\Sigma)$, then one readily checks that
\begin{equation}\label{eq:starscale}
\omega_2^\tau=\big(\prod_{\rho\in\tau(1)}\lambda_\rho^{-1}\big)(\lambda^{-1}\cdot\omega_1^\tau).
\end{equation}
If the product in the right-hand side of \eqref{eq:starscale} is not one, then $\omega_1^\tau$ is not equivalent to $\omega_2^\tau$. Thus, a Minkowski weight $\omega\in MW_d(\Sigma)$ \emph{does not} determine a unique Minkowski weight in $MW_d(\Sigma^\tau)$; rather, it determines a family of Minkowski weights that are related by positive scaling. Since positive scalings of Minkowski weights do not affect the properties we are most interested in---such as Alexandrov--Fenchel type inequalities---this will not be a serious issue, but it is certainly worth noting.

For every cone $\sigma\in N_\tau\Sigma$, let $\sigma\setminus\tau$ denote the cone with rays $\sigma(1)\setminus\tau(1)$. Define the \textbf{boundary of $\tau$} to be the subfan
\[
B_\tau\Sigma=\{\sigma\setminus\tau\mid\sigma\in N_\tau\Sigma\},
\]
and note that the quotient map $V\rightarrow V^\tau$ induces a cone-wise bijection of fans $B_\tau\Sigma\rightarrow \Sigma^\tau$. Given any Minkowski $k$-weight $\gamma\in MW_k(N_\tau\Sigma,u)$ with $k\leq d^\tau$, we obtain a Minkowski $k$-weight $\overline{\gamma}\in MW_k(\Sigma^\tau,u^\tau)$ defined by
\[
\overline{\gamma}(\overline\pi)=\gamma(\pi),
\]
where $\pi\in B_\tau\Sigma$ and $\overline\pi$ is its image in $\Sigma^\tau$. This induces, for any $k\leq d^\tau$,  a linear map $MW_k(N_\tau\Sigma,u)\rightarrow MW_k(\Sigma^\tau,u^\tau)$ that sends $\gamma$ to $\overline\gamma$. Given $D\in D(\Sigma)$, we can always choose a piecewise linear representative $\phi$ such that $\phi_\tau=0$, and any such $\phi$ descends to a piecewise linear function $\overline\phi$ on $\Sigma^\tau$; let $\overline D$ denote the divisor on $\Sigma^\tau$ represented by $\overline\phi$. This gives us a linear map $D(\Sigma)\rightarrow D(\Sigma^\tau)$ that sends $D$ to $\overline D$. It follows from the definition of the $D(\Sigma)$-action that the linear maps $MW_k(N_\tau\Sigma,u)\rightarrow MW_k(\Sigma^\tau,u^\tau)$ and $D(\Sigma)\rightarrow D(\Sigma^\tau)$ are compatible with the associated module structures in the following sense:
\begin{equation}\label{eq:compatiblemodulestructurestar}
\overline{D\cdot\gamma}=\overline{D}\cdot\overline\gamma\in MW_{k-1}(\Sigma^\tau,u^\tau).
\end{equation}

Label the rays of $\tau$ by $\tau(1)=\{\rho_1,\dots,\rho_k\}$. Given any Minkowski $d$-weight $\omega\in MW_d(\Sigma,u)$, it is straightforward to see that $D_{u,\rho}\cdot\omega$ is supported on $N_\rho\Sigma(d-1)$ for any ray $\rho\in\Sigma(1)$, and iterating this process, it follows that $D_{u,\rho_1}\cdots D_{u,\rho_k}\cdot\omega$ is supported on $N_\tau\Sigma(d^\tau)$. Furthermore, by analyzing the $D(\Sigma)$-action more closely, we see that, for any $\pi\in B_\tau\Sigma(d_\tau)$, we have
\[
(D_{u,\rho_1}\cdots D_{u,\rho_k}\cdot\omega)(\pi)=\omega^\tau(\pi).
\]
It then follows from \eqref{eq:compatiblemodulestructurestar} that, for any $D_{1},\dots,D_{\ell}\in D(\Sigma)$, we have
\[
\overline{D_{1}\cdots D_{\ell}\cdot D_{u,\rho_1}\cdots D_{u,\rho_k}\cdot\omega}=\overline D_{1}\cdots\overline D_\ell\cdot \omega^\tau\in MW_{d-k-\ell}(\Sigma^\tau,u^\tau). 
\]
In particular, if $\ell=d-k$, we have argued that
\begin{equation}\label{eq:reduction}
\deg_{\Sigma,u,\omega}(D_1\cdots D_{d-k}\cdot D_{u,\rho_1}\cdots D_{u,\rho_{k}})=\deg_{\Sigma^\tau,u^\tau,\omega^\tau}(\overline D_1\cdots \overline D_{d-k}).
\end{equation}
The upshot of \eqref{eq:reduction} is that, upon choosing a marking $u$, it allows us to reduce mixed degree computations from $\Sigma$ to $\Sigma^\tau$, providing a means for inductive arguments on dimension.

For the convex geometer, we note that, in the complete fan setting, \eqref{eq:reduction} is equivalent to a standard result that reduces the mixed volumes of a collection of strongly isomorphic polytopes to the mixed volumes on the facets of those polytopes \cite[Lemma~5.1.5]{Schneider}. For the algebraic geometer, we note that, in the rational complete fan setting, \eqref{eq:reduction} is a consequence of the projection formula applied to the inclusion of the torus-invariant subvariety corresponding to the star fan.

\subsection{Convex divisors}\label{subsec:convex}

Let $\Sigma$ be a simplicial $d$-fan. We say that a divisor $D\in D(\Sigma)$ is \textbf{convex} if, for every $\tau\in\Sigma$, there exists a representative $\phi$ that vanishes on $\tau$ and is nonnegative on its neighborhood $N_\tau\Sigma$. We say that $D$ is \textbf{strictly convex} if, for every $\tau\in\Sigma$, there exists a representative $\phi$ that vanishes on $\tau$ and is strictly positive on $N_\tau\Sigma\setminus\tau$. Convex divisors form a closed convex cone $\overline K(\Sigma)\subseteq D(\Sigma)$ whose interior is the set of strictly convex divisors $K(\Sigma)=\overline K(\Sigma)^\circ$ \cite[Proposition~4.3]{AdiprasitoHuhKatz}. Borrowing terminology from toric geometry, we say that $\Sigma$ is \textbf{quasiprojective} if $K(\Sigma)\neq\emptyset$. 

We say that a Minkowski weight is \textbf{nonnegative/positive} if all of its values are nonnegative/positive. It follows from the definition of the $D(\Sigma)$-action that
\begin{itemize}
\item $D\cdot\omega$ is nonegative whenever $\omega$ is nonnegative and $D\in \overline K(\Sigma)$, and
\item $D\cdot\omega$ is positive whenever $\omega$ is positive and $D\in K(\Sigma)$.
\end{itemize}
Intuitively, this just means that convex divisors are represented by piecewise linear functions whose graphs only bend upward, while strictly convex functions are additionally required to be bend upward along every cone of $\Sigma$. It follows from the above bullet points that
\begin{equation}\label{eq:nonnegativity}
\deg_{\Sigma,\omega}(D_1\cdots D_d)\geq 0\;\;\;\text{ if $\omega\in MW_d(\Sigma)$ is nonnegative and $D_1,\dots,D_d\in\overline K(\Sigma)$}
\end{equation}
and
\begin{equation}\label{eq:positivity}
\deg_{\Sigma,\omega}(D_1\cdots D_d)> 0\;\;\;\text{ if $\omega\in MW_d(\Sigma)$ is positive and $D_1,\dots,D_d\in K(\Sigma)$.}
\end{equation}

Positive Minkowski weights in $MW_d(\Sigma)$ are particularly relevant to our story. We define a \textbf{tropical $d$-fan $\bSigma$} to be a pair $\bSigma=(\Sigma,\omega)$ where $\Sigma$ is a simplicial $d$-fan and $\omega\in MW_d(\Sigma)$ is a positive Minkowski $d$-weight. Given a marking $u$ of $\Sigma$, we write $\bSigma(u)=(\Sigma,u,\omega)$ for the canonical representative on the marked fan $(\Sigma,u)$. Given a cone $\tau\in\Sigma$, we denote the marked tropical structure induced by $\bSigma(u)$ on the star fan $\Sigma^\tau$ by $\bSigma(u)^\tau=(\Sigma^\tau,u^\tau,\omega^\tau).$

\section{Lorentzian fans}\label{sec:lorentzianfans}

We now introduce the central definition of this paper.

\begin{definition}\label{def:lorentzian}
A marked tropical $d$-fan $\bSigma(u)=(\Sigma,u,\omega)$ is said to be \textbf{Lorentzian} if $\Sigma$ is quasiprojective and, for each $\tau\in\Sigma$ and all $D_3,\dots,D_{d^\tau}\in K(\Sigma^\tau)$, the quadratic form
\begin{align*}
D(\Sigma^\tau)\times D(\Sigma^\tau)&\rightarrow\R\\
(D_1,D_2)&\mapsto \deg_{\bSigma(u)^\tau}(D_1D_2D_3\cdots D_{d^\tau})
\end{align*}
has signature $(1,q,r)$.
\end{definition}

We recall that the \textbf{signature} $(p,q,r)$ of a quadratic form records the number---counted with multiplicity---of positive, negative, and zero eigenvalues, respectively, of any symmetric matrix representing the quadratic form with respect to some basis. That the signature does not depend on the choice of basis is a consequence of Sylvester's law of inertia. 

By \eqref{eq:starscale}, we see that different choices of $u$ result in quadratic forms that differ by a positive scalar, and since positive scaling does not affect the signature of a quadratic form, being Lorentzian does not depend on $u$. Consequently, we say that a tropical fan $\bSigma$ is \textbf{Lorentzian} if $\bSigma(u)$ is Lorentzian for some---and, thus, for every---marking $u$. 

Note that every star of a star is, itself, a star; more precisely, $(\Sigma^\tau)^\pi=\Sigma^{\hat\pi}$, where $\hat\pi\in\Sigma$ is the unique cone containing $\tau$ and whose quotient is $\pi$. It then follows that a tropical fan is Lorentzian if and only if all of its stars are Lorentzian. Moreover, we observe that the stipulation on the stars in Definition~\ref{def:lorentzian} is vacuously true for star fans of dimension zero and one, and it follows that a tropical fan is Lorentzian if and only all of its stars of dimension at least two are Lorentzian.

One of the primary motivations for studying Lorentzian fans is the following observation. 

\begin{proposition}\label{prop:tropicalfanAF}
Let $\bSigma$ be a Lorentzian $d$-fan. For any $D_1,\dots,D_d\in \overline K(\Sigma)$, we have
\[
\tag{AF}\deg_{\bSigma}(D_1 D_2 D_3\cdots D_d)^2\geq \deg_{\bSigma}(D_1^2D_3\cdots D_d)\cdot \deg_{\bSigma}(D_2^2D_3\cdots D_d).
\]
Furthermore, for any $D_1,D_2\in \overline K(\Sigma)$, the sequence 
\[
\big(\deg_{\bSigma}(D_1^kD_2^{d-k}\big)_{k=0}^d
\]
is unimodal and log-concave.
\end{proposition}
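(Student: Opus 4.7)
The plan is to execute the argument sketched in the introduction. I use the Lorentzian hypothesis applied to $\tau=0\in\Sigma$ to produce a quadratic form of signature $(1,q,r)$, extract AF from its $2\times 2$ principal minors via Cauchy interlacing, and then pass from $K(\Sigma)$ to $\overline K(\Sigma)$ by continuity; log-concavity of the specialized sequence then drops out of AF by a standard substitution, and unimodality from log-concavity plus nonnegativity. No step presents a serious obstacle, though one should take care that Cauchy interlacing is being invoked in its form valid for possibly degenerate real symmetric matrices.

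For the strictly convex case, fix $D_3,\ldots,D_d\in K(\Sigma)$ and consider the quadratic form $\Phi\colon D(\Sigma)\times D(\Sigma)\to\R$ given by $\Phi(D_1,D_2)=\deg_\bSigma(D_1D_2D_3\cdots D_d)$. Since $\Sigma^{0}=\Sigma$, Definition~\ref{def:lorentzian} applied at $\tau=0$ asserts that $\Phi$ has signature $(1,q,r)$, that is, exactly one positive eigenvalue. Given $D_\heartsuit,D_\diamondsuit\in K(\Sigma)$, extend $\{D_\heartsuit,D_\diamondsuit\}$ to a basis of $D(\Sigma)$ and let $A$ be the symmetric matrix of $\Phi$ in that basis. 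The $2\times 2$ principal submatrix $M$ indexed by $D_\heartsuit,D_\diamondsuit$ has all four entries strictly positive by \eqref{eq:positivity}, so $\operatorname{tr}(M)>0$ and hence $M$ has at least one positive eigenvalue. On the other hand, Cauchy's interlacing theorem applied to $M\subseteq A$ forces the number of positive eigenvalues of $M$ to be at most that of $A$, which is one. Thus $M$ has exactly one positive eigenvalue, so $\det(M)\leq 0$, which is precisely the AF inequality for $D_\heartsuit,D_\diamondsuit,D_3,\ldots,D_d\in K(\Sigma)$.

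The extension of AF to $\overline K(\Sigma)$ is by continuity: both sides of AF are polynomial, hence continuous, in $(D_1,\ldots,D_d)\in D(\Sigma)^d$, and $\overline K(\Sigma)^d$ is the closure of $K(\Sigma)^d$, so the inequality persists under limits. For the final assertion, apply AF with $D_\heartsuit=D_1$, $D_\diamondsuit=D_2$, and the remaining $d-2$ arguments taken to be $k-1$ copies of $D_1$ together with $d-k-1$ copies of $D_2$ for $1\leq k\leq d-1$; this yields $a_k^2\geq a_{k-1}a_{k+1}$ where $a_k=\deg_\bSigma(D_1^kD_2^{d-k})$, i.e.\ log-concavity. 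The sequence is nonnegative by \eqref{eq:nonnegativity}, and a standard elementary argument then gives unimodality: where all $a_k$ are positive, the ratios $a_{k+1}/a_k$ are nonincreasing and so cross the threshold $1$ at most once, while if some $a_k=0$ then log-concavity forces $a_{k-1}a_{k+1}=0$ and hence, by induction, all subsequent terms to vanish, leaving a unimodal envelope.
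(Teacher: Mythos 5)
Your proposal is correct and follows essentially the same route as the paper: Lorentzian at $\tau = 0$ gives signature $(1,q,r)$, Cauchy interlacing bounds the positive-eigenvalue count of the $2\times 2$ principal minor from above, positivity of its entries bounds it from below, so the determinant is nonpositive, and the remaining assertions follow by continuity and the nonnegative-log-concave-implies-unimodal observation. One small point worth a sentence in a careful write-up (which neither you nor the paper addresses explicitly): the step "extend $\{D_\heartsuit, D_\diamondsuit\}$ to a basis" tacitly assumes $D_\heartsuit, D_\diamondsuit$ are linearly independent; if $D_\diamondsuit$ is a positive scalar multiple of $D_\heartsuit$ (the only degenerate possibility when both lie in the open cone $K(\Sigma)$), AF holds with equality by direct computation.
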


We recall that a nonnegative sequence $(a_0,\dots,a_d)$ is \textbf{unimodal} if 
\[
a_0\leq \dots \leq a_\ell \geq \dots \geq a_d \;\;\;\text{ for some }\;\;\;\ell\in\{0,\dots,d\}
\]
and \textbf{log-concave} if
\[
a_k^2\geq a_{k-1}a_{k+1}\;\;\;\text{ for all }\;\;\;k\in\{1,\dots,d-1\}.
\]

\begin{proof}[Proof of Proposition~\ref{prop:tropicalfanAF}]
If we assume that $D_1,\dots,D_d\in K(\Sigma)$, then (AF) was justified in the introduction, the log-concavity assertion is just a special case of (AF), and the unimodality assertion follows from the observation that every log-concave sequence of positive numbers is unimodal. To extend these statements to $D_i\in\overline K(\Sigma)$, we approximate each $D_i$ by an element of $K(\Sigma)=\overline K(\Sigma)^\circ$ (using the fact that $K(\Sigma)\neq\emptyset$), and then take a limit, noting that all of the desired assertions are closed conditions and therefore pass to the limit.
\end{proof}

Definition~\ref{def:lorentzian} is inspired, in part, by recent developments of Br\"and\'en and Leake regarding Lorentzian polynomials on cones \cite{BrandenLeake}. We now recall the main definition from \cite{BrandenLeake} in order to make this connection precise.

Suppose that $C\subseteq\R^n$ is a nonempty open convex cone, and let $f\in\R[x_1,\dots,x_n]$ be a homogeneous polynomial of degree $d$. For each $i\in\{1,\dots,n\}$ and $v=(v_1,\dots,v_n)\in\R^n$, we use the following shorthand for partial  and directional derivatives:
\[
\partial_i=\frac{\partial}{\partial x_i}\;\;\;\text{ and }\;\;\;\partial_v=\sum_{i=1}^n v_i\partial_i.
\]
Following Br\"and\'en and Leake, we say that $f$ is \textbf{$C$-Lorentzian} if, for all $v_1,\dots,v_d\in C$, 
\begin{enumerate}
\item[(P)] $\partial_{v_1}\cdots\partial_{v_d}f>0$, and
\item[(H)] the quadratic form
\begin{align*}
\R^n\times\R^n&\rightarrow\R\\
(x,y)&\mapsto\partial_x\partial_y\partial_{v_3}\cdots\partial_{v_d}f
\end{align*}
has exactly one positive eigenvalue.
\end{enumerate}
More concretely, (H) is equivalent to the condition that the Hessian of the quadratic form
\[
\partial_{v_3}\cdots\partial_{v_d}f\in\R[x_1,\dots,x_n]
\]
has exactly one positive eigenvalue, which is also equivalent to the condition that there exists an invertible linear change of variables $\ell_1(x),\dots,\ell_n(x)$ such that
\[
\partial_{v_3}\cdots\partial_{v_d}f=\ell_1(x)^2-\ell_2(x)^2-\cdots-\ell_k(x)^2
\]
for some $k\in\{2,\dots,n\}$.

To connect the definition of Lorentzian fans in Definition~\ref{def:lorentzian} with the definition of $C$-Lorentzian functions, we require a few additional notions. Given a simplicial $d$-fan $\Sigma$, consider the vector space $\R^{\Sigma(1)}$ with basis vectors $\{e_\rho\mid\rho\in\Sigma(1)\}$. A general element of $\R^{\Sigma(1)}$ is written
\[
z=\sum_{\rho\in\Sigma(1)}z_\rho e_\rho.
\]
Given a marking $u$ on $\Sigma$, there is a natural exact sequence
\[
V^\vee\rightarrow \R^{\Sigma(1)}\rightarrow D(\Sigma)\rightarrow 0
\]
where the maps are given by
\[
\phi\mapsto z_u(\phi)=\sum_{\rho\in\Sigma(1)}\phi(u_\rho) e_\rho\;\;\;\text{ and }\;\;\; z\mapsto D_u(z)=\sum_{\rho\in\Sigma(1)}z_\rho D_{u,\rho}
\]
Given a Minkowski weight $\omega\in MW_d(\Sigma,u)$, define the \textbf{volume polynomial of $(\Sigma,u,\omega)$} by
\begin{align*}
\Vol_{\Sigma,u,\omega}:\R^{\Sigma(1)}&\rightarrow \R\\
z&\mapsto \deg_{\Sigma,u,\omega}(D_u(z)^d).
\end{align*}
Alternatively, using the equivalent definitions \eqref{eq:degminkowski} and \eqref{eq:degchow} of mixed degrees, we may also realize the volume polynomial in terms of Chow classes:
\[
\Vol_{\Sigma,u,\omega}(z)=f_{\Sigma,u,\omega}(X_u(z)^d)\;\;\;\text{ where }\;\;\;X_u(z)=\sum_{\rho\in\Sigma(1)}z_\rho X_{u,\rho}\in A^1(\Sigma).
\]
Note that $\Vol_{\Sigma,u,\omega}$ is a homogeneous polynomial of degree $d$ in $\R[z_\rho\mid\rho\in\Sigma(1)]$ and it vanishes on the image of $V^\vee$. Consider the open cone $K_u(\Sigma)\subseteq\R^{\Sigma(1)}$ consisting of all $z\in\R^{\Sigma(1)}$ such that $D_u(z)\in K(\Sigma)$. The characterization in the next result builds a concrete bridge between Definition~\ref{def:lorentzian} and the concept of $C$-Lorentzian functions.

\begin{proposition}\label{prop:lorentzians}
If $\bSigma(u)$ is a quasiprojective tropical $d$-fan, then $\bSigma(u)$ is Lorentzian if and only if, for every $\tau\in\Sigma$, the volume polynomial $\Vol_{\bSigma(u)^\tau}$ is $K_{u^\tau}(\Sigma^\tau)$-Lorentzian.
\end{proposition}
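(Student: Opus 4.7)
The plan is to show that, cone by cone, the $K_{u^\tau}(\Sigma^\tau)$-Lorentzian condition on the volume polynomial $\Vol_{\bSigma(u)^\tau}$ matches the signature condition in Definition~\ref{def:lorentzian} for the star $\Sigma^\tau$. The key input is a direct translation between iterated directional derivatives of the volume polynomial and mixed degrees of divisors on the star fan.

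Fix $\tau\in\Sigma$. Starting from the Chow-theoretic description $\Vol_{\bSigma(u)^\tau}(z)=f_{\Sigma^\tau,u^\tau,\omega^\tau}(X_{u^\tau}(z)^{d^\tau})$ and expanding by the chain rule, I would show that for any $v_1,\dots,v_k\in\R^{\Sigma^\tau(1)}$,
\begin{align*}
\partial_{v_1}\cdots\partial_{v_k}\Vol_{\bSigma(u)^\tau}(z) = \frac{d^\tau!}{(d^\tau-k)!}\,\deg_{\bSigma(u)^\tau}\!\big(D_{u^\tau}(v_1)\cdots D_{u^\tau}(v_k)\cdot D_{u^\tau}(z)^{d^\tau-k}\big).
\end{align*}
Taking $k=d^\tau$ and $v_1,\dots,v_{d^\tau}\in K_{u^\tau}(\Sigma^\tau)$, the associated divisors $D_{u^\tau}(v_i)$ lie in $K(\Sigma^\tau)$, so the right-hand side is strictly positive by \eqref{eq:positivity} (using that $\omega^\tau$ is a positive Minkowski weight). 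Hence condition (P) of $C$-Lorentzian holds automatically for $C=K_{u^\tau}(\Sigma^\tau)$, and the content of $K_{u^\tau}(\Sigma^\tau)$-Lorentzian reduces entirely to condition (H).

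Condition (H) asserts that for every $v_3,\dots,v_{d^\tau}\in K_{u^\tau}(\Sigma^\tau)$ the symmetric bilinear form on $\R^{\Sigma^\tau(1)}$ given by
\begin{align*}
\Psi(x,y)=\deg_{\bSigma(u)^\tau}\!\big(D_{u^\tau}(x)D_{u^\tau}(y)\,D_3\cdots D_{d^\tau}\big),\qquad D_i=D_{u^\tau}(v_i)\in K(\Sigma^\tau),
\end{align*}
has exactly one positive eigenvalue. Since $\Psi(x,y)$ depends on $x,y$ only through $D_{u^\tau}(x),D_{u^\tau}(y)$, it factors through the surjection $D_{u^\tau}\colon\R^{\Sigma^\tau(1)}\twoheadrightarrow D(\Sigma^\tau)$ and descends to a quadratic form $\Phi$ on $D(\Sigma^\tau)$. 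Choosing a basis of $\R^{\Sigma^\tau(1)}$ that pairs a basis of $\ker(D_{u^\tau})$ with a lift of a basis of $D(\Sigma^\tau)$, the Gram matrix of $\Psi$ is block diagonal with one block equal to the Gram matrix of $\Phi$ and the other block identically zero. Consequently the nonzero eigenvalues of $\Psi$ and $\Phi$ agree with multiplicity, so $\Psi$ has exactly one positive eigenvalue if and only if $\Phi$ has signature $(1,q,r)$.

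As $v_3,\dots,v_{d^\tau}$ range over $K_{u^\tau}(\Sigma^\tau)$, the divisors $D_3,\dots,D_{d^\tau}$ range over $K(\Sigma^\tau)$, so condition (H) for $\Vol_{\bSigma(u)^\tau}$ becomes exactly the signature condition in Definition~\ref{def:lorentzian} applied to the star $\Sigma^\tau$. Quantifying over all $\tau\in\Sigma$ on both sides yields the equivalence. I expect the main obstacle to be the bookkeeping in the signature comparison between $\Psi$ on $\R^{\Sigma^\tau(1)}$ and $\Phi$ on the quotient $D(\Sigma^\tau)$, together with cleanly articulating the chain-rule expansion that turns iterated $\partial_v$'s into iterated intersections with $D_{u^\tau}(v)$; everything else is a careful but routine translation between the polynomial and the divisorial languages.
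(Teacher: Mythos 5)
Your proposal matches the paper's proof in both structure and substance: the same derivative-to-mixed-degree identity establishes that (P) is automatic, and the same observation that the quadratic form on $\R^{\Sigma^\tau(1)}$ factors through the quotient $D(\Sigma^\tau)$ (with only extra zero eigenvalues) shows that (H) is equivalent to the signature condition of Definition~\ref{def:lorentzian}. Your block-diagonal Gram matrix argument makes explicit what the paper compresses into a single sentence, but the underlying idea is identical.
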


\begin{proof}
For any $z_1,\dots,z_{k}\in\R^{\Sigma(1)}$, notice that
\begin{equation}\label{eq:partials}
\partial_{z_1}\cdots\partial_{z_k}\Vol_{\bSigma(u)}=\frac{d!}{(d-k)!}\deg_{\bSigma(u)}(D_u(z)^{d-k}\cdot D_u(z_1)\cdots D_u(z_{k})). 
\end{equation}
Thus, from \eqref{eq:positivity}, we see that
\[
\partial_{z_1}\cdots\partial_{z_d}\Vol_{\bSigma(u)}>0\;\;\;\text{ for all }\;\;\;z_1,\dots,z_d\in K_u(\Sigma),
\] 
showing that $\Vol_{\bSigma(u)}$ satisfies (P), without assuming that $\bSigma(u)$ is Lorentzian. Since $\Vol_{\bSigma(u)}$ vanishes on the image of $V^\vee$, it follows from \eqref{eq:partials} that the quadratic form
\begin{align*}
\R^{\Sigma(1)}\times \R^{\Sigma(1)}&\rightarrow\R\\
(x,y)&\mapsto\partial_x\partial_y\partial_{z_3}\cdots\partial_{z_d}\Vol_{\bSigma(u)},
\end{align*}
descends to $\frac{1}{2}d!$ times the quadratic form
\begin{align*}
D(\Sigma)\times D(\Sigma)&\rightarrow\R\\
(D_1,D_2)&\mapsto \deg_{\bSigma(u)}(D_1D_2D_u(z_3)\cdots D_u(z_d)),
\end{align*}
so the two quadratic forms have the same signature. Thus, $\Vol_{\bSigma(u)}$ satisfies (H) if and only if the latter quadratic form has exactly one positive eigenvalue, and the proposition then follows from applying these arguments to $\bSigma(u)^\tau$ for every $\tau\in\Sigma$.
\end{proof}

Because the assertion that $\Vol_{\bSigma(u)}$ is $K_u(\Sigma)$-Lorentzian is independent of the choice of $u$, we often abbreviate it and simply write that $\Vol_\bSigma$ is $K(\Sigma)$-Lorentzian.

We close this section with two illustrative examples. The first is an example of a Lorentzian fan that is not Lefschetz, in the sense of Ardila, Denham, and Huh \cite{ArdilaDenhamHuh}.

\begin{example}[A Lorentzian fan that is not Lefschetz]\label{ex:lorentziannotlefschetz}
Let $\Sigma$ be the $2$-skeleton of the coordinate subspaces in $\R^3$, marked by the vectors $\pm\mathrm{e}_1$, $\pm\mathrm{e}_2$, and $\pm\mathrm{e}_3$, and let $\omega:\Sigma(2)\rightarrow\R$ be the constant function that maps each cone to $1$. One readily verifies that $\omega\in MW_2(\Sigma,u)$, so we obtain a marked tropical fan $\bSigma(u)=(\Sigma,u,\omega)$.

Denote the 12 top-dimensional cones of $\Sigma$ by $\sigma_{1,2}^i$, $\sigma_{1,3}^i$, $\sigma_{2,3}^i$
where $i\in\{1,2,3,4\}$ denotes the respective quadrant in the coordinate plane determined by the indices in the subscripts. For any values $a,b,c\in\R$, one checks that the function $\omega_{a,b,c}:\Sigma(2)\rightarrow\R$ defined by
\[
\omega_{a,b,c}(\sigma_{1,2}^i)=ab,\;\;\;\omega_{a,b,c}(\sigma_{1,3}^i)=ac,\;\;\;\omega_{a,b,c}(\sigma_{2,3}^i)=bc
\]
is a Minkowski $2$-weight on $\Sigma$, and it follows that $\dim_\R(A^2(\Sigma))=\dim_\R(MW_2(\Sigma)) \geq 3$. In particular, since the top-degree Chow group is not one-dimensional, $A^\bullet(\Sigma)$ is not a Poincar\'e duality algebra, showing that $\bSigma(u)$ cannot be Lefschetz in the sense of \cite{ArdilaDenhamHuh}. On the other hand, denoting the rays of $\Sigma$ by $\Sigma(1)=\{\rho_1^\pm,\rho_2^\pm,\rho_3^\pm\}$, the volume polynomial can be computed explicitly:
\begin{align*}
\Vol_{\bSigma(u)}&=2(z_{\rho_1^+}+z_{\rho_1^-})(z_{\rho_2^+}+z_{\rho_2^-})+2(z_{\rho_1^+}+z_{\rho_1^-})(z_{\rho_3^+}+z_{\rho_3^-})+2(z_{\rho_2^+}+z_{\rho_2^-})(z_{\rho_3^+}+z_{\rho_3^-})\\
&=\frac{1}{2}(z_1+z_2+2z_3)^2-\frac{1}{2}(z_1-z_2)^2-2(z_3)^2,
\end{align*}
where $z_1=(z_{\rho_1^+}+z_{\rho_1^-})$ and similarly for $z_2$ and $z_3$. It follows that $\bSigma(u)$ is Lorentzian.

\end{example}

The next example, due to Babaee and Huh \cite{BabaeeHuh}, shows that not all tropical fans are Lorentzian.

\begin{example}[A tropical fan that is not Lorentzian]\label{ex:tropicalnotlorentzian}
In \cite[Section~5]{BabaeeHuh}, Babaee and Huh construct a two-dimensional fan $\Sigma$ in $\R^4$ by performing certain modifications to the fan over a realization of the complete bipartite graph $K_{4,4}$. Their construction is rather intricate, so we do not describe the details, but we note that it is a rational fan and marked by the primitive ray generators, and it is tropical with respect to a positive Minkowski weight $\omega\in MW_2(\Sigma,u)$. Their main result concerning this fan (Theorem 5.1) implies that $\Hess(\Vol_{\bSigma(u)})$ has more than one positive eigenvalue. Thus, $\bSigma(u)$ is tropical but not Lorentzian.

\end{example}

\section{A two-dimensional characterization of Lorentzian fans}\label{sec:characterization}

We now describe a more practical characterization of Lorentzian tropical fans, which essentially reduces the verification of the Lorentzian property to checking it for just the two-dimensional stars. Before stating this result, we first introduce a property of $d$-fans that is essential to our characterization. 

Given a $d$-fan $\Sigma$ in $V$ and a cone $\tau\in\Sigma$ with $\dim(\tau)\leq d-2$, we say that $\Sigma$ is \textbf{pinched at $\tau$} if $|\Sigma^\tau|\setminus\{0\}$ is disconnected (or equivalently, if $|N_\tau\Sigma|\setminus\tau$ is disconnected). If $\Sigma$ is not pinched at any $\tau\in\Sigma$, then we say that $\Sigma$ is \textbf{unpinched}. For example, a $3$-fan comprised of two maximal cones meeting along a ray is pinched at the ray, but a $3$-fan comprised of two maximal cones meeting along a $2$-cone is unpinched.

We are now ready to state our characterization of Lorentzian tropical fans.

\begin{theorem}\label{thm:characterization}
A quasiprojective tropical $d$-fan $\bSigma(u)$ is Lorentzian if and only if
\begin{enumerate}
\item[\mylabel{A}{\textnormal{(A)}}] $\Sigma$ is unpinched, and 
\item[\mylabel{B}{\textnormal{(B)}}]  $\bSigma(u)^\tau$ is Lorentzian for every $\tau\in\Sigma(d-2)$.
\end{enumerate}
\end{theorem}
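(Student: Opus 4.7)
Condition (B) is immediate since every star of a Lorentzian fan is Lorentzian. For (A), I argue by contradiction: if $\Sigma^\tau \setminus \{0\}$ is disconnected for some $\tau$ of codimension at least two, decompose $\Sigma^\tau = \Sigma_1 \cup \Sigma_2$ into tropical subfans meeting only at the origin, with $\omega^\tau$ restricting to positive Minkowski weights on each $\Sigma_i$. The existence of a positive top-dimensional weight forces $|\Sigma_i(1)|$ to exceed the dimension of its span, so $D(\Sigma_i) \neq 0$. Extending nonzero classes by zero to the other component yields linearly independent classes $D_1, D_2 \in D(\Sigma^\tau)$ with $\deg(D_i^2 D_3 \cdots D_{d^\tau}) > 0$ and $\deg(D_1 D_2 D_3 \cdots D_{d^\tau}) = 0$ (since $D_1 D_2$ has support in $\{0\}$) for any $D_3, \dots, D_{d^\tau} \in K(\Sigma^\tau)$. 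Thus the form is positive-definite on $\langle D_1, D_2 \rangle$, contradicting signature $(1, q, r)$.

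\textbf{Backward direction: inductive setup.} I proceed by induction on $d$. The base case $d = 2$ coincides with (B). For $d \geq 3$, each star $\bSigma^\rho$ at a ray $\rho$ has dimension $d-1$ and inherits (A) (since a pinched substar of $\Sigma^\rho$ corresponds to a pinched substar of $\Sigma$ at a codimension-$\geq 2$ cone) and (B) (since codimension-two stars of $\Sigma^\rho$ are exactly stars of $\Sigma$ at codimension-two cones containing $\rho$). By induction, $\bSigma^\rho$ is Lorentzian, and consequently so is every $\bSigma^\tau$ with $\tau \neq 0$. It remains to verify the signature condition at $\tau = 0$: that $\Phi(D_1, D_2) = \deg_\bSigma(D_1 D_2 D_3 \cdots D_d)$ has signature $(1, q, r)$ on $D(\Sigma)$ for all $D_3, \dots, D_d \in K(\Sigma)$.

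\textbf{The key step and the main obstacle.} Via Proposition~\ref{prop:lorentzians} and the polarization identities for Lorentzian polynomials on cones from \cite{BrandenLeake}, I would reduce to the diagonal case: show that $\Phi_D(D_1, D_2) = \deg_\bSigma(D_1 D_2 D^{d-2})$ has signature $(1, q, r)$ for every $D \in K(\Sigma)$. The reduction formula \eqref{eq:reduction} then identifies the matrix $M$ of $\Phi_D$ in the generating set $\{D_{u, \rho}\}_{\rho \in \Sigma(1)}$ as having zero diagonal and entries $M_{\rho,\eta} = \deg_{\bSigma^{\cone(\rho,\eta)}}(\overline{D}^{d-2}) > 0$ whenever $\{\rho, \eta\}$ spans a $2$-cone of $\Sigma$; equivalently, $M$ is the weighted adjacency matrix of the ray-and-$2$-cone graph of $\Sigma$. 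Condition (A) at $\tau = 0$ ensures $|\Sigma|\setminus\{0\}$ is connected, and combining this with the observation that rays inside any common maximal cone are adjacent through $2$-faces, I conclude this graph is connected; Perron--Frobenius then supplies one simple positive eigenvalue with strictly positive eigenvector, giving at least one positive eigenvalue of $\Phi_D$. The core technical obstacle is to rule out any second positive eigenvalue. My plan is to use the reduction formula in reverse, through the identity $(Mv)_\rho = \deg_{\bSigma^\rho}(\overline{D_u(v)}\,\overline{D}^{d-2})$, to descend to each star $\bSigma^\rho$ where, by induction, the analogous quadratic form on $D(\Sigma^\rho)$ is already known to satisfy the signature constraint. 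Applying the Aleksandrov--Fenchel inequality on these inductively-Lorentzian stars should then force any hypothetical second positive eigenvector of $M$ to produce contradictory sign information on some star. Making this Aleksandrov-style descent rigorous---and in particular tracking the interplay between eigenvectors in $\R^{\Sigma(1)}$, their images in $D(\Sigma)$ modulo linear equivalences, and their restrictions to stars---is where I anticipate the bulk of the technical difficulty.
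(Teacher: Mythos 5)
Your forward direction is essentially the paper's: showing that a disconnected star forces the quadratic form to be positive-definite on a $2$-plane (the paper phrases it as a failure of unimodality, but this is the same information). Your inductive setup for the backward direction is also correct: the observations that a ray-star $\Sigma^\rho$ inherits \ref{A} because its stars are stars of $\Sigma$ at cones of codimension $\geq 2$, and inherits \ref{B} because its codimension-two stars are exactly stars of $\Sigma$ at codimension-two cones containing $\rho$, are both sound, and they do reduce the problem to verifying the signature condition at $\tau = 0$ once all proper stars are known to be Lorentzian.

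The gap is exactly where you flag it, and it is not a detail but the entire content of the theorem. You correctly identify the Perron--Frobenius structure that yields \emph{at least one} positive eigenvalue (using connectedness of the $2$-cone adjacency graph, guaranteed by \ref{A}), but you have no argument ruling out a second one; the sentence ``should then force any hypothetical second positive eigenvector to produce contradictory sign information on some star'' is an announcement of intent, not a proof. This is precisely the piece that Aleksandrov's original argument and the Cordero-Erausquin--Klartag--Merigot--Santambrogio proof both struggle to isolate cleanly, and the paper deliberately does not re-derive it: it invokes Br\"and\'en and Leake's Proposition~2.4 (Lemma~\ref{lem:BrandenLeake}), which is a black box packaging exactly this inductive signature argument. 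The paper's proof consists of checking the four hypotheses of that lemma, which is where \ref{A} (irreducibility of the Hessian, condition (3)), \ref{B} and the induction on codimension (condition (4) via the change of variables \eqref{eq:changeofvariables}), positivity from \eqref{eq:positivity} (condition (2)), and Lemma~\ref{lem:effective} (condition (1), effectiveness of $K_u(\Sigma)$) each appear. You should use that lemma directly rather than re-prove it.

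A secondary concern is your proposed reduction to the ``diagonal'' case $\Phi_D(D_1,D_2)=\deg_{\bSigma}(D_1D_2D^{d-2})$. It is not evident that verifying the signature for a single repeated $D\in K(\Sigma)$ suffices for the full statement quantified over all $D_3,\dots,D_d\in K(\Sigma)$; this kind of diagonal-to-mixed polarization is itself a nontrivial theorem in the Lorentzian-polynomial literature and needs to be cited or proved, not assumed. The paper sidesteps this entirely because Lemma~\ref{lem:BrandenLeake} is stated for the mixed derivatives $\partial_{v_1}\cdots\partial_{v_{d-2}}f$ with arbitrary $v_i\in C$, so no diagonal reduction is needed.
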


Recall that stars of quasiprojective fans are quasiprojective and the Lorentzian condition is vacuous for stars of dimension at most one. Thus, \ref{B} is equivalent to imposing that
\begin{align*}
D(\Sigma^\tau)\times D(\Sigma^\tau)&\rightarrow\R\\
(D_1,D_2)&\mapsto \deg_{\bSigma(u)^\tau}(D_1D_2)
\end{align*}
has exactly one positive eigenvalue for every $\tau\in\Sigma(d-2)$, which is also equivalent to the condition that $\Vol_{\bSigma(u)^\tau}$ has exactly one positive eigenvalue for every $\tau\in\Sigma(d-2)$. We emphasize, again, that these conditions are independent of $u$. Interestingly, we note that this characterization of Lorentzian fans does not require one to have any knowledge about convex divisors on $\Sigma$ or its star fans.

Before proving Theorem~\ref{thm:characterization}, we discuss a few preliminary results. The first result shows that the unpinched condition is necessary for a tropical fan to be Lorentzian.

\begin{lemma}\label{lem:unpinched}
Let $\bSigma(u)$ be a quasiprojective tropical $d$-fan with $d\geq 2$. If $\Sigma\setminus\{0\}$ is disconnected, then $\bSigma(u)$ is not Lorentzian.
\end{lemma}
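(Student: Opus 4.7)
The plan is to show that at the minimal cone $\tau=\{0\}$ the Lorentzian condition from Definition~\ref{def:lorentzian} fails. I would translate the problem into a property of the volume polynomial $\Vol_\bSigma$ via Proposition~\ref{prop:lorentzians}, and then exploit an additive splitting of $\Vol_\bSigma$ induced by the disconnection of $\Sigma\setminus\{0\}$.

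First I would set up the decomposition. Grouping connected components, write $\Sigma=\Sigma_1\cup\Sigma_2$ with each $\Sigma_i$ a subfan and $\Sigma_1\cap\Sigma_2=\{0\}$. Since any cone of $\Sigma$ strictly containing a cone of $\Sigma_i$ lies in the same connected component, maximal cones of $\Sigma_i$ remain maximal in $\Sigma$; thus each $\Sigma_i$ is a simplicial $d$-fan, and $\omega_i:=\omega|_{\Sigma_i(d)}$ is a positive Minkowski $d$-weight, the balancing at $\tau\in\Sigma_i(d-1)$ being inherited from balancing on $\Sigma$ since $d\geq 2$ forces $\tau\neq\{0\}$. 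Each $\Sigma_i$ is quasiprojective, because any representative of $E\in K(\Sigma)$ restricts to one satisfying the strict convexity inequalities at every cone of $\Sigma_i$. Moreover $E|_{\Sigma_i}$ represents a nonzero class in $D(\Sigma_i)$: if it only admitted linear representatives, then at any facet $\tau\in\Sigma_i(d-1)$ a linear $\phi\in V^\vee$ vanishing on $\tau$ and giving strict convexity would have $\phi(u_{\sigma\setminus\tau})>0$ for every $\sigma\succ\tau$, contradicting the balancing identity $\sum_{\sigma\succ\tau}\omega_i(\sigma)\phi(u_{\sigma\setminus\tau})=0$ with all $\omega_i(\sigma)>0$.

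The central step is the additive splitting
\[
\Vol_\bSigma(z) \;=\; \Vol_{\bSigma_1}(z^{(1)}) \;+\; \Vol_{\bSigma_2}(z^{(2)})
\]
under the variable decomposition $\R^{\Sigma(1)} = \R^{\Sigma_1(1)}\oplus\R^{\Sigma_2(1)}$. Writing $X_u(z)=X_u(z^{(1)})+X_u(z^{(2)})$ and expanding the $d$-th power in $A^\bullet(\Sigma,u)$, every mixed monomial contains a product $X_{u,\rho_1}X_{u,\rho_2}$ with $\rho_i\in\Sigma_i(1)$, which lies in $I_\Sigma$ since no such pair spans a cone of $\Sigma$. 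This leaves $X_u(z)^d=X_u(z^{(1)})^d+X_u(z^{(2)})^d$, and I expect the main obstacle to be the bookkeeping needed to identify $f_{\Sigma,u,\omega}(X_u(z^{(i)})^d)$ with $f_{\Sigma_i,u_i,\omega_i}(X_{u_i}(z^{(i)})^d)=\Vol_{\bSigma_i}(z^{(i)})$. The essential combinatorial point is that any cone of $\Sigma$ whose rays all lie in $\Sigma_i(1)$ is itself a cone of $\Sigma_i$, so the Stanley--Reisner expansion agrees in both rings, and $\omega|_{\Sigma_i(d)}=\omega_i$ ensures the Minkowski-weight functionals match.

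To conclude, I would pick any $E\in K(\Sigma)$ with coordinate vector $z_E\in K_u(\Sigma)$ and projections $z_E^{(i)}$. Since partial derivatives respect the splitting,
\[
\partial_{z_E}^{d-2}\Vol_\bSigma \;=\; \partial_{z_E^{(1)}}^{d-2}\Vol_{\bSigma_1} \;+\; \partial_{z_E^{(2)}}^{d-2}\Vol_{\bSigma_2}
\]
is a quadratic polynomial with block-diagonal Hessian $\mathrm{diag}(H_1,H_2)$. By \eqref{eq:partials}, each block satisfies
\[
(z_E^{(i)})^\top H_i\, z_E^{(i)} \;=\; d!\,\deg_{\bSigma_i}(E|_{\Sigma_i}^d),
\]
which is strictly positive by \eqref{eq:positivity} since $E|_{\Sigma_i}\in K(\Sigma_i)$ and $\omega_i$ is positive. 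Hence each $H_i$ has at least one positive eigenvalue, so the full Hessian has at least two, violating condition~(H) of $K_u(\Sigma)$-Lorentzianness for $\Vol_\bSigma$. By Proposition~\ref{prop:lorentzians}, $\bSigma(u)$ is not Lorentzian.
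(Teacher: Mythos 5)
Your proof is correct but takes a genuinely different concluding route from the paper's. Both proofs start with the same decomposition $\Sigma = \Sigma_1 \cup \Sigma_2$ with $\Sigma_1 \cap \Sigma_2 = \{0\}$, and both exploit the fact that cross-terms vanish because no cone of $\Sigma$ meets both $\Sigma_1(1)$ and $\Sigma_2(1)$. From there, the paper picks $D_i \in K(\Sigma_i)$, extends by zero to $\tilde D_i \in \overline{K}(\Sigma)$, and shows that the sequence $\bigl(\deg_\bSigma(\tilde D_1^k \tilde D_2^{d-k})\bigr)_{k=0}^d$ is $(\deg_{\bSigma_2}(D_2^d), 0, \dots, 0, \deg_{\bSigma_1}(D_1^d))$, which is not unimodal since $d \geq 2$; this contradicts the unimodality consequence of Proposition~\ref{prop:tropicalfanAF}. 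You instead establish the cleaner structural identity $\Vol_\bSigma(z) = \Vol_{\bSigma_1}(z^{(1)}) + \Vol_{\bSigma_2}(z^{(2)})$ and directly exhibit two positive eigenvalues in the Hessian of $\partial_{z_E}^{d-2}\Vol_\bSigma$, so condition (H) fails and Proposition~\ref{prop:lorentzians} applies. Your route is closer to the definition and makes the block structure of the quadratic form explicit, at the cost of needing the volume-polynomial splitting; the paper's route is shorter because it only needs the vanishing of mixed degrees, not of the whole cross-block of the volume polynomial. One small caveat in your write-up: the claim $f_{\Sigma,u,\omega}(X_u(z^{(i)})^d) = \Vol_{\bSigma_i}(z^{(i)})$ is not purely a Stanley--Reisner statement, since the linear ideals $J_{\Sigma,u}$ and $J_{\Sigma_i,u_i}$ genuinely differ; one must additionally observe that for any $\phi \in V^\vee$, the difference $\sum_{\rho\in\Sigma(1)}\phi(u_\rho)x_\rho - \sum_{\rho\in\Sigma_i(1)}\phi(u_\rho)x_\rho$ lies in the variables from the other component and hence annihilates, modulo $I_\Sigma$, any positive-degree monomial in $\Sigma_i(1)$-variables. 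With that point filled in (or by verifying the splitting on the Minkowski-weight side via the locality of the $D(\Sigma)$-action), the argument is complete. The side remark that $E|_{\Sigma_i}$ is nonzero in $D(\Sigma_i)$ is not needed, since \eqref{eq:positivity} already yields $\deg_{\bSigma_i}(E|_{\Sigma_i}^d) > 0$ whenever $E|_{\Sigma_i} \in K(\Sigma_i)$.
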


\begin{proof}
Assume that $\Sigma\setminus\{0\}$ is disconnected and, using this assumption, choose $d$-fans $\Sigma_1,\Sigma_2\subseteq\Sigma$ such that $\Sigma_1\cup\Sigma_2=\Sigma$ and $\Sigma_1\cap\Sigma_2=\{0\}$. Set $\omega_i=\omega|_{\Sigma_i(d)}$ and $u_i=u|_{\Sigma_i(1)}$. Note that 
\[
K(\Sigma)=K(\Sigma_1)\oplus K(\Sigma_2)\subseteq \R^{\Sigma_1(1)}\oplus\R^{\Sigma_2(1)}=\R^{\Sigma(1)}.
\]
Given $D_1\in K(\Sigma_1)$ and $D_2\in K(\Sigma_2)$, it follows that $\tilde D_1=(D_1,0)$ and $\tilde D_2=(0,D_2)$ are in $\overline K(\Sigma)$. Noting that $D_{u,\rho}\cdot D_{u,\eta}\cdot\omega=0\in MW_{d-2}(\Sigma,\omega)$ for any $\rho\in\Sigma_1(1)$ and $\eta\in\Sigma_2(1)$, it follows that
\[
\deg_{\bSigma}\big(\tilde D_1^k\tilde D_2^{d-k}\big)=\begin{cases}
\deg_{\bSigma_1}(D_1^d) & k=d,\\
\deg_{\bSigma_2}(D_2^d) & k=0,\\
0 & k=1,\dots,d-1.
\end{cases}
\]
Since $D_i\in K(\Sigma_i)$, \eqref{eq:positivity} implies that the first two cases are positive, and since $d\geq 2$, the sequence 
\[
\big(\deg_{\bSigma}\big(\tilde D_1^k\tilde D_2^{d-k}\big)\big)_{k=0}^d
\] 
is not unimodal. Thus, Proposition~\ref{prop:tropicalfanAF} implies that $\bSigma(u)$ is not Lorentzian.
\end{proof}

The next preparatory result was proved by Br\"and\'en and Leake \cite{BrandenLeake}. To state it, we require a few additional notions. The \textbf{lineality space} of an open convex cone $C\subseteq \R^n$ is defined to be $L_C=\overline C\cap -\overline C$; in other words, it is the largest subspace contained in the closure of $C$. We say that an open convex cone $C\subseteq\R^n$ is \textbf{effective} if $C=C\cap\R_{>0}^n+L_C$. A $k\times k$ matrix $M$ is said to be \textbf{irreducible} if its adjacency graph---the undirected graph on $k$ labeled vertices with an edge between the $i$th and $j$th vertex whenever the $(i,j)$ entry of $M$ is nonzero---is connected. Br\"and\'en and Leake proved the following result.

\begin{lemma}[\cite{BrandenLeake}, Proposition 2.4]\label{lem:BrandenLeake}
Let $f\in\R[x_1,\dots,x_n]$ be a homogeneous polynomial of degree $d\geq 3$, and let $C$ be an open, convex, and effective cone in $\R^n$. If 
\begin{enumerate}
\item $f(x+w)=f(x)$ for all $x\in\R^n$ and $w\in L_C$,
\item $\partial_{v_1}\cdots\partial_{v_d}f>0$ for all $v_1,\dots,v_d\in C$,
\item the Hessian of $\partial_{v_1}\cdots\partial_{v_{d-2}}f$ is irreducible and its off-diagonal entries are nonnegative for all $v_1,\dots,v_{d-2}\in C$, and
\item $\partial_if$ is $C$-Lorentzian for all $i=1,\dots, n$,
\end{enumerate}
then $f$ is $C$-Lorentzian.
\end{lemma}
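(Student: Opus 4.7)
The plan is to verify that $f$ satisfies the two conditions (P) and (H) defining a $C$-Lorentzian polynomial. Condition (P) is exactly hypothesis (2), so the work lies in establishing (H). Fix $v_1,\dots,v_{d-2}\in C$ and set $H=\Hess(\partial_{v_1}\cdots\partial_{v_{d-2}}f)$; one must show that $H$ has exactly one positive eigenvalue. The $L_C$-invariance in (1) gives $\partial_wf=0$ for $w\in L_C$, and combining this with the effectiveness of $C$ allows us to replace each $v_i$ by a representative in $C\cap\R^n_{>0}$. By hypothesis (3), $H$ then has nonnegative off-diagonal entries and irreducible adjacency graph, so adding a sufficiently large multiple of the identity to $H$ yields a matrix with all nonnegative entries to which the Perron--Frobenius theorem applies: $H$ admits a simple largest eigenvalue $\lambda_1$ with a strictly positive eigenvector. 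Since (2) gives $v^THv=\partial^2_v\partial_{v_1}\cdots\partial_{v_{d-2}}f>0$ for any $v\in C$, we conclude $\lambda_1>0$.

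The remaining task is to rule out a second positive eigenvalue of $H$, and I would handle this by induction on $d$. For the inductive step $d\ge 4$, fix $v\in C$ and set $g=\partial_vf$, which is homogeneous of degree $d-1$. Hypotheses (1), (2), (3) for $g$ follow immediately from the corresponding hypotheses for $f$ by inserting $\partial_v$ into each derivative string. Hypothesis (4) for $g$ asks that $\partial_ig=\partial_v(\partial_if)$ be $C$-Lorentzian; this follows from hypothesis (4) for $f$ together with the elementary observation that the derivative of any $C$-Lorentzian polynomial in a direction of $C$ is again $C$-Lorentzian (immediate from the definitions of (P) and (H), since one simply inserts $\partial_v$ at the front of the derivative string). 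The inductive hypothesis then gives that $g$ is $C$-Lorentzian, and applying (H) for $g$ to the directions $v_1,\dots,v_{d-3}$ (having absorbed $v_{d-2}=v$ into $g$) yields that $H$ has exactly one positive eigenvalue, as required.

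The main obstacle is therefore the base case $d=3$. In this case, hypothesis (4) says that each matrix $H^{(i)}:=\Hess(\partial_if)$ has exactly one positive eigenvalue and, by (P) applied to $\partial_if$, satisfies $w^TH^{(i)}w'>0$ for all $w,w'\in C$; moreover $H=\sum_iv_iH^{(i)}$ with $v_i>0$. Suppose for contradiction that $H$ admits a second positive eigenvalue, and let $W$ be the two-dimensional subspace spanned by the Perron--Frobenius eigenvector $u>0$ and a second positive eigenvector $u'$ orthogonal to $u$. Then $H|_W$ is positive definite, while by Cauchy interlacing each $H^{(i)}|_W$ has at most one positive eigenvalue. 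The hard part will be turning this tension into a genuine contradiction: my plan is to exploit the strong $C$-positivity $w^TH^{(i)}w'>0$ on $C\times C$ to control the restriction $H^{(i)}|_W$ using vectors in $W$ whose sign structure is forced by the positivity of $u$ and the alternating-sign structure of $u'$, then combine this with the fact that a positive combination of $2\times 2$ symmetric matrices each with nonpositive determinant generally cannot be positive definite unless their ``positive directions'' align in a way that is obstructed here by irreducibility and cone-positivity. Making this final step precise --- weaving together Perron--Frobenius, Cauchy interlacing, and the $C$-positivity inherited from hypothesis (4) --- is the technical heart of the proof.
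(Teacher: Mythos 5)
First, note that the paper does not prove this statement at all: it is imported verbatim as \cite{BrandenLeake}, Proposition 2.4, so there is no internal proof to compare against. Your attempt is therefore a reproof of an external result, and it contains a genuine gap. The inductive step for $d\geq 4$ is fine --- the verification that hypotheses (1)--(4) pass from $f$ to $g=\partial_v f$ is correct, and the observation that directional derivatives of $C$-Lorentzian polynomials along $C$ remain $C$-Lorentzian is indeed immediate from the definition --- but this means the entire content of the proposition is concentrated in the base case $d=3$, which you explicitly leave unfinished. That base case is precisely the technical heart of Br\"and\'en--Leake's Proposition 2.4 (and of the corresponding degree-$3$ lemma in Br\"and\'en--Huh), and it does not follow from the soft considerations you sketch.

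Concretely, the strategy of restricting to the $2$-plane $W$ spanned by two positive eigenvectors of $H$ and deriving a contradiction from the restrictions $H^{(i)}|_W$ cannot work as stated. ``At most one positive eigenvalue'' for a $2\times 2$ symmetric matrix does not mean nonpositive determinant (a negative semidefinite matrix has nonnegative determinant), and more importantly a positive combination of $2\times 2$ symmetric matrices each with exactly one positive eigenvalue can perfectly well be positive definite: $\mathrm{diag}(2,-1)+\mathrm{diag}(-1,2)=I$. So irreducibility and the cone-positivity $w^TH^{(i)}w'>0$ must enter in an essential, global way, not merely through the restriction to $W$; your hedge (``unless their positive directions align'') is exactly the unproven point. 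The known proofs handle this by a connectedness/nondegeneracy argument --- showing the number of positive eigenvalues of $H(v)$ is locally constant on the cone because $H(v)$ has constant rank there, and then identifying a point where it equals one --- rather than by a pointwise $2\times 2$ obstruction. As it stands, condition (H) for $f$ is not established, so the proof is incomplete.
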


In order to apply Lemma~\ref{lem:BrandenLeake} in our setting, the following result is necessary.

\begin{lemma}\label{lem:effective}
If $\bSigma(u)$ is a quasiprojective tropical $d$-fan, then $K_u(\Sigma)$ is effective.
\end{lemma}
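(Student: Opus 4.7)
The plan is to unpack the definitions and exploit the case $\tau=\{0\}$ of strict convexity, which forces any strictly convex divisor to have a representative that is strictly positive on all of $|\Sigma|\setminus\{0\}$.

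First I would set up notation. Since $K_u(\Sigma)=D_u^{-1}(K(\Sigma))$ and $D_u\colon\R^{\Sigma(1)}\to D(\Sigma)$ is a linear surjection with kernel equal to the image $z_u(V^\vee)$ (by exactness of the sequence displayed just before the definition of $\Vol_{\Sigma,u,\omega}$), the cone $K_u(\Sigma)$ is open, convex, nonempty (by quasiprojectivity), and invariant under translation by any element of $z_u(V^\vee)$. Consequently, for any $z_0\in K_u(\Sigma)$ and any $w\in z_u(V^\vee)$, the ray $z_0/t+w$ lies in $K_u(\Sigma)$ for all $t>0$, so letting $t\to\infty$ gives $w\in\overline{K_u(\Sigma)}$; the same argument applies to $-w$, so $z_u(V^\vee)\subseteq L_{K_u(\Sigma)}$.

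Now fix $z\in K_u(\Sigma)$ and set $D=D_u(z)\in K(\Sigma)$. Applying the definition of strict convexity at the particular cone $\tau=\{0\}\in\Sigma$, there is a piecewise linear representative $\phi\in PL(\Sigma)$ of $D$ that vanishes at $0$ and satisfies $\phi(x)>0$ for every $x\in N_{\{0\}}\Sigma\setminus\{0\}=|\Sigma|\setminus\{0\}$. Since each mark $u_\rho$ lies in $\rho^\circ\subseteq|\Sigma|\setminus\{0\}$, the coordinates $\phi(u_\rho)$ are all strictly positive. Define $z'=\bigl(\phi(u_\rho)\bigr)_{\rho\in\Sigma(1)}\in\R^{\Sigma(1)}_{>0}$. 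By the identification of piecewise linear functions with their values at the marks, $\phi$ represents the divisor $D_u(z')$, so $D_u(z')=D\in K(\Sigma)$ and therefore $z'\in K_u(\Sigma)\cap\R^{\Sigma(1)}_{>0}$.

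To conclude, observe that $D_u(z-z')=D-D=0$, so $w:=z-z'\in\ker(D_u)=z_u(V^\vee)\subseteq L_{K_u(\Sigma)}$ by the first paragraph. This produces the required decomposition
\[
z=z'+w\in\bigl(K_u(\Sigma)\cap\R^{\Sigma(1)}_{>0}\bigr)+L_{K_u(\Sigma)},
\]
establishing effectiveness. I do not anticipate a serious obstacle here: the argument is essentially a careful translation between the divisorial and coordinate pictures, and the only substantive input is the observation that strict convexity applied to the cone $\tau=\{0\}$ yields global positivity of a representative on $|\Sigma|\setminus\{0\}$ — everything else is formal exactness of $V^\vee\to\R^{\Sigma(1)}\to D(\Sigma)\to 0$.
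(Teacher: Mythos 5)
Your proof is correct and follows the same core strategy as the paper: both arguments hinge on the observation that a strictly convex divisor admits a piecewise linear representative that is strictly positive on $|\Sigma|\setminus\{0\}$ (which is exactly the definition of strict convexity applied at $\tau=\{0\}$), so that an arbitrary $z\in K_u(\Sigma)$ decomposes as a coordinate-wise positive vector $z'$ plus an element $w\in z_u(V^\vee)\subseteq L_{K_u(\Sigma)}$. Two small points are worth flagging. First, effectiveness is defined as an equality $C=C\cap\R_{>0}^n+L_C$, and your argument only produces the forward inclusion $K_u(\Sigma)\subseteq K_u(\Sigma)\cap\R_{>0}^{\Sigma(1)}+L_{K_u(\Sigma)}$; the reverse inclusion is immediate from the general fact that $C+L_C\subseteq C+\overline{C}=C$ for any open convex cone $C$, so this is a harmless omission, but it should be stated. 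Second, the paper's proof additionally establishes the reverse containment $L_{K_u(\Sigma)}\subseteq z_u(V^\vee)$, using quasiprojectivity and positivity of $\omega$ to show that the only nonnegative linear function on $\Sigma$ is zero; that direction is not needed for effectiveness, but the paper invokes the full identification $L_{K_u(\Sigma)}=z_u(V^\vee)$ later when verifying condition (1) of Lemma~\ref{lem:BrandenLeake}, so if you only prove the easy inclusion you will need to supply that argument separately.
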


\begin{proof}
We first prove that the lineality space $L_{K_u(\Sigma)}$ is the image of the map $V^\vee\rightarrow \R^{\Sigma(1)}$. 

To see that the image of $V^\vee\rightarrow \R^{\Sigma(1)}$ is contained in $L_{K_u(\Sigma)}$, let $\phi\in V^\vee$ and observe that
\[
D_u(z_u(\phi))=0=D_u(-z_u(\phi)).
\] 
Since $0\in \overline K(\Sigma)$, it follows that $z_u(\phi)\in L_{K_u(\Sigma)}$.

Conversely, suppose that $z\in L_{K_u(\Sigma)}$. By definition, this means that both $D_u(z)$ and $-D_u(z)$ are convex divisors. Thus, we can represent $D_u(z)=[\phi]$ and $-D_u(z)=[\phi']$ for some nonnegative $\phi,\phi'\in PL(\Sigma)$. Adding these nonnegative functions, we obtain a nonnegative function representing $0\in D(\Sigma)$; in other words, a nonnegative linear function on $\Sigma$. On the other hand, using the assumption that $\Sigma$ is quasiprojective and $\omega$ is positive, we see that $D^{d-1}\cdot\omega$ is a positive Minkowski $1$-weight for any $D\in K(\Sigma)$, from which it follows that $0$ can be written as a positive linear combination of the markings. In particular, this implies that the only nonnegative linear function on $\Sigma$ is the zero function, from which we see that $\phi$ and $\phi'$ must both be the zero functions. Thus, $D_u(z)=0$, and it follows that $z=z_u(\phi)$ for some $\phi\in V^\vee$.

We now argue that $K_u(\Sigma)$ is effective; more precisely, we show that
\[
K_u(\Sigma)=K_u(\Sigma)\cap \R_{> 0}^{\Sigma(1)}+L_{K_u(\Sigma)}.
\]
Suppose $z\in K_u(\Sigma)$. Then $D_u(z)\in K(\Sigma)$, so there exists a linear function $\phi\in V^\vee$ such that
\[
z-z_u(\phi)\in\R_{>0}^{\Sigma(1)}.
\]
Since $z_u(\phi)\in L_{K_u(\Sigma)}$, it follows that
\[
z=(z-z_u(\phi))+z_u(\phi)\in K_u(\Sigma)\cap \R_{> 0}^{\Sigma(1)}+L_{K_u(\Sigma)}.
\]
Conversely, suppose that $z\in K_u(\Sigma)\cap \R_{> 0}^{\Sigma(1)}+L_{K_u(\Sigma)}$. Then $z=z'+z_u(\phi)$ for some $z'\in K_u(\Sigma)$ and $\phi\in V^\vee$. Translating by $z_u(\phi)$ does not affect membership in $K_u(\Sigma)$, so $z\in K_u(\Sigma)$.
\end{proof}

We are now prepared to prove our characterization of Lorentzian tropical fans.

\begin{proof}[Proof of Theorem~\ref{thm:characterization}]
Let $\bSigma(u)$ be a quasiprojective tropical $d$-fan. Assume, first, that $\bSigma(u)$ is Lorentzian. It follows that $\bSigma(u)^\tau$ is Lorentzian for every $\tau\in\Sigma$, implying \ref{B}. Furthermore, Lemma~\ref{lem:unpinched} implies that $|\Sigma^\tau|\setminus\{0\}$ is connected for every $\tau\in \Sigma(k)$ with $0\leq k\leq d-2$, showing that $\Sigma$ is unpinched, proving \ref{A}. 

Conversely, assume that $\bSigma(u)$ satisfies \ref{A} and \ref{B}; we prove that $\bSigma(u)$ is Lorentzian. By Proposition~\ref{prop:lorentzians}, it suffices to show that $\Vol_{\bSigma(u)^\tau}$ is $K_{u^\tau}(\Sigma^\tau)$-Lorentzian for every $\tau\in \Sigma(k)$ with $0\leq k\leq d-2$. We accomplish this by induction on the codimension of $\tau$. The base case $\dim(\tau)=d-2$ follows from \ref{B}. For the induction step, assume that $\Vol_{\bSigma(u)^\tau}$ is $K_{u^\tau}(\Sigma^\tau)$-Lorentzian for every $\tau\in \Sigma(k)$ with $\ell+1\leq k\leq d-2$ for some $\ell\in\{0,\dots,d-3\}$, we must show that $\Vol_{\bSigma(u)^\tau}$ is $K_{u^\tau}(\Sigma^\tau)$-Lorentzian for every $\tau\in \Sigma(\ell)$. For the purposes of this argument, and to ease notation, it suffices to assume that $\Sigma^\tau=\Sigma$.

To prove that $\Vol_{\bSigma(u)}$ is $K_{u}(\Sigma)$-Lorentzian, we prove the four conditions of Lemma~\ref{lem:BrandenLeake}. Condition (1) follows from the fact that every element of $L_{K_{u}(\Sigma)}$ is $z_{u}(\phi)$ for some $\phi\in V^\vee$ (as was shown in the proof of Lemma~\ref{lem:effective}), along with the fact that $D_u(z)\in D(\Sigma)$ is invariant under shifting the argument by $z_u(\phi)$. Condition (2) follows from \eqref{eq:positivity} and \eqref{eq:partials}. Thus, it remains to verify (3) and (4); to do so, we study derivatives of the volume polynomial.

For any two rays $\rho,\eta\in\Sigma(1)$, we have 
\[
\partial_\rho\Vol_{\bSigma(u)}=(d-\ell)\cdot \deg_{\bSigma(u)}(D_{u,\rho} \cdot D_{u}(z)^{d-\ell-1}),
\]
and
\[
\partial_\eta\partial_\rho\Vol_{\bSigma(u)}=(d-\ell)(d-\ell-1)\cdot\deg_{\bSigma(u)}(D_{u,\eta} D_{u,\rho} \cdot D_{u}(z)^{d-\ell-2}).
\]
Applying \eqref{eq:reduction} to the right-hand sides of the above equations, we obtain
\begin{equation}\label{eq:firstderivatives}
\partial_\rho\Vol_{\bSigma(u)}=(d-\ell)\cdot \deg_{\bSigma(u)^\rho}\big(\overline{D_{u}(z)}^{d-\ell-1}\big),
\end{equation}
and, assuming that $\eta$ and $\rho$ are distinct rays, we have
\begin{equation}\label{eq:secondderivatives}
\partial_\eta\partial_\rho\Vol_{\bSigma(u)}=\begin{cases}
(d-\ell)(d-\ell-1)\cdot\deg_{\bSigma(u)^\pi}\big(\overline{D_u(z)}^{d-\ell-2}\big) &\text{if }\{\rho,\eta\}=\pi(1)\\
0 & \text{else.}
\end{cases}
\end{equation}

To prove Condition (3), let $z_1,\dots,z_{d-\ell-2}\in K_u(\Sigma)$. It follows from \eqref{eq:secondderivatives} that the off-diagonal $(\rho,\eta)$-entry of the Hessian of $\partial_{z_1}\cdots\partial_{z_{d-\ell-2}}\Vol_{\bSigma(u)}$ is either $0$ if $\rho$ and $\eta$ do not form the rays of a $2$-cone in $\Sigma$, or it is
\[
(d-\ell)!\cdot\deg_{\bSigma(u)^\pi}\big(\overline{D_u(z_1)}\cdots \overline{D_u(z_{d-\ell-2}})\big)
\]
if $\{\rho,\eta\}=\pi(1)$ for some $\pi\in\Sigma(2)$. Since $\overline D\in K_{u^\pi}(\Sigma^\pi)$ for every $D\in K(\Sigma)$, it follows from \eqref{eq:positivity} that all of the off-diagonal entries of the Hessian are nonnegative, and the entries indexed by pairs $(\rho,\eta)$ that form the rays of a $2$-cone in $\Sigma$ are strictly positive. Condition \ref{A} ensures that $|\Sigma|\setminus\{0\}$ is connected, from which the above positivity in off-diagonal terms implies that the Hessian of $\partial_{z_1}\cdots\partial_{z_{d-\ell-2}}\Vol_{\bSigma(u)}$ is irreducible, verifying (3).

Finally, to prove (4), we must argue that $\partial_\rho\Vol_{\bSigma(u)}$ is $K_u(\Sigma)$-Lorentzian. Property (P) follows from \eqref{eq:positivity}, so it remains to prove Property (H), which is equivalent to the assertion that, for any $z_3,\dots,z_{d-\ell-1}\in K_u(\Sigma)$, the quadratic form
\begin{equation}\label{eq:matrix1}
\partial_{z_3}\cdots\partial_{z_{d-\ell-1}}\partial_\rho\Vol_{\bSigma(u)}
\end{equation}
has exactly one positive eigenvalue. The induction hypothesis implies that, given any $ z_3,\dots, z_{d-\ell-1}\in K_{u^\rho}(\Sigma^\rho)$, the quadratic form
\begin{equation}\label{eq:matrix2}
\partial_{ z_3}\cdots\partial_{ z_{d-\ell-1}}\Vol_{\bSigma(u)^\rho}
\end{equation}
has exactly one positive eigenvalue. Note that \eqref{eq:matrix1} and \eqref{eq:matrix2} can be related using \eqref{eq:firstderivatives}, which implies that, for any $z_3,\dots,z_{d-\ell-1}\in\R^{\Sigma(1)}$, we have
\begin{equation}
\partial_{z_3}\cdots\partial_{z_{d-\ell-1}}\partial_\rho\Vol_{\bSigma(u)}=\frac{(d-\ell)!}{2}\cdot\partial_{\overline z_3}\cdots\partial_{\overline z_{d-\ell-1}}\Vol_{\bSigma(u)^\rho},
\end{equation}
where $\overline z_i\in \R^{\Sigma^\rho(1)}$ is any vector such that $D(\overline z_i)=\overline{D(z_i)}$. If we choose $\phi\in V^\vee$ such that $\phi(u_\rho)=-1$, then for any $z\in\R^{\Sigma(1)}$, we can define $\overline z\in\R^{\Sigma^\rho(1)}$ by 
\begin{equation}\label{eq:changeofvariables}
\overline z_{\overline \eta}=z_\eta+z_\rho\phi(u_\eta).
\end{equation}
It then follows that \eqref{eq:matrix2} specializes to \eqref{eq:matrix1} under the linear change of variables \eqref{eq:changeofvariables}:
\[
\partial_{z_3}\cdots\partial_{z_{d-\ell-1}}\partial_\rho\Vol_{\bSigma(u)}(z)=\frac{(d-\ell)!}{2}\partial_{\overline z_3}\cdots\partial_{\overline z_{d-\ell-1}}\Vol_{\bSigma(u)^\rho}(\overline z)
\]
Extending the change of variables to an invertible change of variables by introducing an extraneous variable $\overline z_{\overline\rho}=z_\rho$, it follows that the two quadratic forms \eqref{eq:matrix1} and \eqref{eq:matrix2} have the same signature, so the fact that the \eqref{eq:matrix2} has exactly one positive eigenvalue implies that \eqref{eq:matrix1} also has exactly one positive eigenvalue.
\end{proof}

In order to apply Theorem~\ref{thm:characterization} in practice, it is necessary to understand the volume polynomials of the two-dimensional star fans of $\Sigma$. As it turns out, these two-dimensional volume polynomials can be computed in full generality, as we describe in the next result, which then makes the applicability of Theorem~\ref{thm:characterization} completely concrete.

\begin{proposition}\label{prop:2dvolume}
If $\bSigma(u)$ is a tropical $2$-fan, then
\[
\Vol_{\bSigma(u)}=\sum_{\sigma\in\Sigma(2)\atop \sigma(1)=\{\rho_1,\rho_2\}}2 \omega(\sigma)z_{\rho_1} z_{\rho_2}-\sum_{\rho\in\Sigma(1)}a_{\rho} z_\rho^2
\]
where $a_\rho$ is determined by the formula
\[
\sum_{\sigma\in\Sigma(1)\atop\rho\prec\sigma}\omega(\sigma)u_{\sigma/\rho}=a_{\rho} u_\rho.
\]
\end{proposition}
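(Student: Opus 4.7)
The plan is to expand the volume polynomial using bilinearity of mixed degree and then compute each type of coefficient directly from the definitions. Writing
\[
\Vol_{\bSigma(u)}(z)=\deg_{\bSigma(u)}(D_u(z)^2)=\sum_{\rho,\eta\in\Sigma(1)}z_\rho z_\eta\,\deg_{\bSigma(u)}(D_{u,\rho}D_{u,\eta}),
\]
I split into three cases according to the pair $(\rho,\eta)$: (i) $\rho\neq\eta$ and $\{\rho,\eta\}\subseteq\sigma(1)$ for some $\sigma\in\Sigma(2)$; (ii) $\rho\neq\eta$ but $\{\rho,\eta\}$ is not contained in any cone of $\Sigma$; and (iii) $\rho=\eta$.

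Case (ii) contributes nothing: by the definition of $I_\Sigma$, we have $X_{u,\rho}X_{u,\eta}=0$ in $A^\bullet(\Sigma,u)$, so $\deg_{\bSigma(u)}(D_{u,\rho}D_{u,\eta})=0$ by \eqref{eq:degchow}. For case (i), I apply the reduction formula \eqref{eq:reduction} with $\tau=\sigma\in\Sigma(2)$ and $k=2$, $d-k=0$: the right-hand side becomes the empty mixed degree on the $0$-dimensional star $\Sigma^\sigma$, which by definition equals $\omega^\sigma(\{0\})=\omega(\sigma)$. Summing the contributions from both orderings $(\rho_1,\rho_2)$ and $(\rho_2,\rho_1)$ yields the cross term $2\omega(\sigma)z_{\rho_1}z_{\rho_2}$.

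The main work is case (iii), the diagonal $\deg_{\bSigma(u)}(D_{u,\rho}^2)$, and this is where I expect the one genuine computation. I apply \eqref{eq:reduction} with $\tau=\rho$ and $k=1$, which reduces it to $\deg_{\bSigma(u)^\rho}(\overline{D_{u,\rho}})$ on the $1$-dimensional star $\Sigma^\rho$. To make the descent concrete, I fix $\phi_0\in V^\vee$ with $\phi_0(u_\rho)=1$ and represent $D_{u,\rho}$ by the piecewise linear function $\phi-\phi_0$, where $\phi$ is the standard representative with $\phi(u_\rho)=1$ and $\phi(u_\eta)=0$ for $\eta\neq\rho$; this new representative vanishes on $\rho$ and descends to $\Sigma^\rho$, taking value $-\phi_0(u_\eta)$ at the marking $u^\rho_{\overline\eta}$ of each ray $\overline\eta\in\Sigma^\rho(1)$. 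Applying the explicit $D(\Sigma)$-action formula to compute a degree in the $1$-fan $\Sigma^\rho$, together with the identification of rays $\overline\eta\leftrightarrow\sigma\succ\rho$ via $\eta=\sigma\setminus\rho$ and $\omega^\rho(\overline\eta)=\omega(\sigma)$, yields
\[
\deg_{\bSigma(u)}(D_{u,\rho}^2)=-\sum_{\sigma\in\Sigma(2),\,\sigma\succ\rho}\omega(\sigma)\phi_0(u_{\sigma\setminus\rho})=-\phi_0\Bigl(\sum_{\sigma\succ\rho}\omega(\sigma)u_{\sigma\setminus\rho}\Bigr).
\]
The balancing condition \eqref{eq:balancingcondition} at $\rho\in\Sigma(1)$ says precisely that the inner sum equals $a_\rho u_\rho$ for the scalar $a_\rho$ in the proposition, and since $\phi_0(u_\rho)=1$ this evaluates to $-a_\rho$. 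Combining the three cases gives the claimed formula. The only real obstacle is bookkeeping in the diagonal computation—choosing a representative of $D_{u,\rho}$ that vanishes on $\rho$ so that the reduction formula applies cleanly and the balancing condition can be invoked—but once that is set up, everything collapses to a one-line evaluation.
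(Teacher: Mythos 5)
Your proof is correct, and the core computation is the same as the paper's: in both cases the off-diagonal contributions are read off from the definition of the degree map, and the diagonal term $\deg_{\bSigma(u)}(D_{u,\rho}^2)$ is reduced to an evaluation of a linear functional on $\sum_{\sigma\succ\rho}\omega(\sigma)u_{\sigma\setminus\rho}$, at which point the balancing condition produces $-a_\rho$. The difference is one of language. The paper carries out the diagonal computation entirely inside the Chow ring $A^\bullet(\Sigma,u)$: it picks $\phi\in V^\vee$ with $\phi(u_\rho)=-1$, uses the linear relation in $J_{\Sigma,u}$ to rewrite $X_{u,\rho}=\sum_{\eta\neq\rho}\phi(u_\eta)X_{u,\eta}$, multiplies by $X_{u,\rho}$, and applies $f_{\bSigma(u)}$. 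You instead route through the star-fan reduction formula \eqref{eq:reduction} at $\tau=\rho$, descend a representative $\phi-\phi_0$ (with $\phi_0(u_\rho)=1$) to $\Sigma^\rho$, and evaluate the $D(\Sigma^\rho)$-action at the origin. The two choices of linear functional are sign-conjugate, and the resulting balancing-condition evaluations coincide; your version leans on more machinery (the reduction formula and the descent of piecewise linear representatives), whereas the paper's is a shorter direct manipulation of relations in the Chow ring, but substantively the arguments are the same.
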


\begin{proof}
By the Chow-theoretic definition of the volume polynomial
\[
\Vol_{\bSigma(u)}(z)=f_{\bSigma(u)}(X_u(z)^2),
\]
we must verify that
\begin{enumerate}
\item $f_{\bSigma(u)}(X_{u,\eta_1}X_{u,\eta_1})=0$ for all $\eta_1,\eta_2\in\Sigma(1)$ that do not lie on a common cone,
\item $f_{\bSigma(u)}(X_{u,\eta_1}X_{u,\eta_1})=\omega(\sigma)$ for all $\sigma\in\Sigma(2)$ with rays $\eta_1$ and $\eta_2$, and
\item $f_{\bSigma(u)}(X_{u,\rho}^2)=-a_\rho$ for all $\rho\in\Sigma(1)$.
\end{enumerate}
The first follows immediately from definition of $A^\bullet(\Sigma,u)$, and the second is essentially the definition of $f_{\bSigma(u)}$. To prove the third, choose $\phi\in V^\vee$ such that $\phi(u_\rho)=-1$, and write
\[
X_{u,\rho}=\sum_{\eta\in\Sigma(1)\atop\eta\neq\rho}\phi(u_\eta) X_{u,\eta}\in A^1(\Sigma,u).
\]
We then compute
\begin{align*}
f_{\bSigma(u)}(X_{u,\rho}^2)&=f_{\bSigma(u)}\Big(X_{u,\rho}\sum_{\eta\in\Sigma(1)\atop\eta\neq\rho}\phi(u_\eta) X_\eta\Big)\\
&=\sum_{\sigma\in\Sigma(2)\atop \rho\prec\sigma}\phi(u_{\sigma/\rho})\omega(\sigma)\\
&=\phi(a_{\rho} u_\rho)\\
&=-a_{\rho}.\qedhere
\end{align*}
\end{proof}

An important class of Lorentzian fans is the class of Bergman fans of matroids (which we do not define here). The proof of the following result is essentially contained in \cite[Section~6]{NowakOMelvenyRoss} (see also \cite{BackmanEurSimpson} and \cite{BrandenLeake} for similar treatments of Condition \ref{B}). 

\begin{theorem}\label{thm:Bergman}
Bergman fans of matroids are Lorentzian.
\end{theorem}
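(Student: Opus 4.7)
The plan is to apply Theorem~\ref{thm:characterization} directly. Let $M$ be a loopless matroid of rank $d+1$ on ground set $E$, and consider its Bergman fan $\Sigma_M$ in $\R^E/\langle\mathbf{1}\rangle$ (rays indexed by proper nonempty flats, cones indexed by flags of flats), equipped with the standard marking by indicator vectors of flats and the Minkowski weight $\omega\equiv 1$ on maximal cones. This gives a tropical $d$-fan $\bSigma_M$. Quasiprojectivity is classical: strictly concave combinations of the rank function furnish strictly convex piecewise linear functions, so $K(\Sigma_M)\neq\emptyset$.

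To verify condition~\ref{A}, I use the standard product decomposition of stars of Bergman fans: for any cone $\tau$ corresponding to a flag of flats $F_1\subsetneq\cdots\subsetneq F_k$, there is a canonical isomorphism
\[
\Sigma_M^\tau \;\cong\; \Sigma_{M|F_1}\times \Sigma_{M|F_2/F_1}\times\cdots\times \Sigma_{M/F_k}
\]
with a product of Bergman fans of consecutive minors. Because the complement of the origin in a product of positive-dimensional fans is connected whenever each factor has that property, the unpinched condition reduces to showing that $|\Sigma_N|\setminus\{0\}$ is connected for every simple loopless matroid $N$ of rank $\geq 2$. This is a standard consequence of the basis exchange axiom: any two complete flags of flats are linked by a sequence of flags differing in a single flat, and adjacent maximal cones of $\Sigma_N$ arise precisely from such pairs.

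To verify condition~\ref{B}, the same product decomposition shows that any $2$-dimensional star of $\Sigma_M$ is either the Bergman fan of a simple loopless rank-$3$ matroid $N$, or a product $\Sigma_{N_1}\times\Sigma_{N_2}$ of two $1$-dimensional Bergman fans of simple loopless rank-$2$ matroids. In both cases Proposition~\ref{prop:2dvolume} gives an explicit formula for $\Vol_{\bSigma^\tau}$ purely in terms of the rank function and the flats. In the product case, the $2$-cones are exactly the pairs of rays from distinct factors, so the quadratic form splits as a product of two positive linear forms minus diagonal corrections, and its signature $(1,q,r)$ is immediate. In the rank-$3$ case, the resulting symmetric matrix is a rank-one nonnegative perturbation of a diagonal matrix whose entries $a_\rho$ are computed from the rank function via Proposition~\ref{prop:2dvolume}, and a direct spectral argument exploiting submodularity of the rank function shows that exactly one eigenvalue is positive.

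The essential technical content, and where I expect the main obstacle to lie, is the signature computation in the rank-$3$ case: this is the step that genuinely uses matroidal submodularity rather than just combinatorial connectivity. As noted in the statement of the theorem, this computation is carried out in \cite{NowakOMelvenyRoss}, and essentially equivalent verifications appear in \cite{BackmanEurSimpson} and \cite{BrandenLeake}; combining any of these with Theorem~\ref{thm:characterization} and the star decomposition above completes the proof.
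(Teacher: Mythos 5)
Your proposal is correct and follows essentially the same route as the paper: apply Theorem~\ref{thm:characterization} to reduce to conditions~\ref{A} and~\ref{B}, then defer the matroidal verifications (connectivity of stars and the rank-$3$ signature computation via submodularity) to \cite{NowakOMelvenyRoss}. The paper's proof is terser, citing Lemmas~6.4--6.6 of that reference directly, while you flesh out the intervening star-decomposition and product arguments; but the logical skeleton and the critical external input are the same.
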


\begin{proof}
By Theorem~\ref{thm:characterization}, it suffices to prove that Bergman fans satisfy \ref{A} and \ref{B}. Property \ref{A} is \cite[Lemma~6.4]{NowakOMelvenyRoss} and Property \ref{B} follows from \cite[Lemmas 6.5 and 6.6]{NowakOMelvenyRoss}.
\end{proof}

\subsection{Disentangling geometry from analysis}

We close this section with reflections on the developments that led to Theorem~\ref{thm:characterization}. As we emphasized in the introduction, the ideas in this paper are heavily influenced by the AF inequalities, and if we specialize Theorem~\ref{thm:characterization} to the setting of complete fans, it essentially tells us that the AF inequalities (for simple strongly isomorphic polytopes) can be deduced from the two-dimensional AF inequalities with only a little bit of geometric insight---basically, the only geometric insight required is the reduction \eqref{eq:reduction}. This is quite different in style from Aleksandrov's original proof of the AF inequalities in \cite{Aleksandrov} (see \cite{Schneider} for a reproduction in English), where the geometric and analytic arguments are closely intertwined, and it was only very recently that the key geometric insight \eqref{eq:reduction} was disentangled from the rest of the analytic arguments. We view the key first step in this disentanglement to be the recent proof of the AF inequalities by Cordero-Erausquin, Klartag, Merigot, and Santambrogio \cite{OneMoreProof}, where it became clear that \eqref{eq:reduction} was the only essential geometric input required to deduce the AF inequalities from the two-dimensional setting. However, the analytic arguments in \cite{OneMoreProof} were still quite complex, requiring a doubly-indexed induction, and we view the second key step in the disentanglement to be the recent work of Br\"and\'en and Leake \cite{BrandenLeake}, which, by changing the focus away from the AF inequalities and instead focusing on the Lorentzian property,  significantly simplified and clarified the analytic arguments in \cite{OneMoreProof}. Now that the disentanglement seems complete, it will be interesting to see what other settings one might be able to apply these ideas.

\section{Basic properties of Lorentzian fans}\label{sec:properties}

In this short section, we collect several useful ways to construct new Lorentzian fans from old ones. Our first result shows that the Lorentzian property is compatible with tropical products. Before stating it, we recall that the product of two fans is simply the collection of pairwise products of their cones:
\[
\Sigma_1\times\Sigma_2=\{\sigma_1\times\sigma_2\mid\sigma_i\in\Sigma_i\}.
\]
If $\Sigma_i$ is a simplicial $d_i$-fan and $\omega_i\in MW_{d_i}(\Sigma_i)$ for $i=1,2$, we naturally obtain a Minkowski weight $\omega_1\times\omega_2\in MW_{d_1+d_2}(\Sigma_1\times\Sigma_2)$ defined by
\[
(\omega_1\times\omega_2)(\sigma_1\times\sigma_2)=\omega_1(\sigma_1)\omega_2(\sigma_2)\;\;\;\text{ for all }\;\;\;\sigma_1\in\Sigma_1(d_1),\;\sigma_2\in\Sigma_2(d_2).
\]
Thus, if $\bSigma_1=(\Sigma_1,\omega_1)$ and $\bSigma_2=(\Sigma_2,\omega_2)$ are tropical fans, we can define their product in the setting of tropical fans as
\[
\bSigma_1\times\bSigma_2=(\Sigma_1\times\Sigma_2,\omega_1\times\omega_2).
\]

\begin{proposition}
If $\bSigma_1$ and $\bSigma_2$ are tropical fans, then $\bSigma_1\times\bSigma_2$ is Lorentzian if and only if $\bSigma_1$ and $\bSigma_2$ are Lorentzian.
\end{proposition}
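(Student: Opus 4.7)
The plan is to apply Theorem~\ref{thm:characterization} to the product fan. The forward direction is quick: since the Lorentzian property is defined as a condition on all stars, and a star of a star is again a star of the original fan, Lorentzianness is inherited by stars. For any maximal cone $\tau_2\in\Sigma_2(d_2)$, a direct inspection of the star construction identifies $(\bSigma_1\times\bSigma_2)^{\{0\}\times\tau_2}$ with the rescaled fan $(\Sigma_1,u_1,\omega_2(\tau_2)\cdot\omega_1)$. Because the Lorentzian property is invariant under positive scaling of the Minkowski weight, $\bSigma_1$ is Lorentzian, and symmetrically for $\bSigma_2$.

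For the converse, assume both $\bSigma_1$ and $\bSigma_2$ are Lorentzian. Quasiprojectivity of the product is immediate: if $D_i\in K(\Sigma_i)$ is represented by a strictly convex $\phi_i\in PL(\Sigma_i)$, then $(u,v)\mapsto\phi_1(u)+\phi_2(v)$ is strictly convex on $\Sigma_1\times\Sigma_2$. It then suffices to verify conditions \ref{A} and \ref{B} of Theorem~\ref{thm:characterization}. Every cone of $\Sigma_1\times\Sigma_2$ has the form $\tau_1\times\tau_2$, whose star is $\Sigma_1^{\tau_1}\times\Sigma_2^{\tau_2}$; set $k_i=d_i-\dim(\tau_i)$, so that codimension at least two amounts to $k_1+k_2\geq 2$. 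If $k_1=0$ (resp.\ $k_2=0$) the star collapses to one factor of dimension at least two, and unpinchedness follows from \ref{A} applied to the other factor. If both $k_i\geq 1$, pick nonzero base points $x_0\in|\Sigma_1^{\tau_1}|$ and $y_0\in|\Sigma_2^{\tau_2}|$ and route any nonzero $(x,y)$ via $(x_0,y)$ and $(x,y_0)$; this short topological argument, using path-connectedness of each fan support, shows that $(|\Sigma_1^{\tau_1}|\times|\Sigma_2^{\tau_2}|)\setminus\{0\}$ is path-connected. This verifies \ref{A}.

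Condition \ref{B} concerns 2-dimensional stars $\Sigma_1^{\tau_1}\times\Sigma_2^{\tau_2}$ with $k_1+k_2=2$. The cases $(k_1,k_2)=(2,0)$ and $(0,2)$ identify the star, up to positive rescaling of the weight, with a 2-dimensional star of $\bSigma_1$ or $\bSigma_2$, which is Lorentzian by hypothesis. The main substantive step is the mixed case $k_1=k_2=1$, a product of two 1-dimensional tropical fans, say with rays $\rho_i$ of weight $w_i$ in $\Sigma_1^{\tau_1}$ and rays $\eta_j$ of weight $w_j'$ in $\Sigma_2^{\tau_2}$. The balancing condition of a 1-dimensional tropical fan at the origin forces $\sum_i w_i u_{\rho_i}=0$ and $\sum_j w_j' u_{\eta_j}=0$, which in turn makes the self-intersection coefficients in Proposition~\ref{prop:2dvolume} vanish, yielding
\[
\Vol = 2\left(\sum_i w_i x_i\right)\left(\sum_j w_j' y_j\right),
\]
a product of two linear forms in disjoint variables with positive coefficients. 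Its Hessian is block-antidiagonal with off-diagonal block $w(w')^T$ of rank one, so by standard linear algebra it has signature $(1,1,r)$. This confirms \ref{B}. The main obstacle throughout the argument is this final computation, but Proposition~\ref{prop:2dvolume} reduces it to elementary linear algebra.
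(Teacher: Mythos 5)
Your proposal is correct and takes essentially the same approach as the paper: reduce to Theorem~\ref{thm:characterization} via the identity $(\bSigma_1\times\bSigma_2)^{\tau_1\times\tau_2}=\bSigma_1^{\tau_1}\times\bSigma_2^{\tau_2}$, verify \ref{A} by connectedness of a product of positive-dimensional fans, and verify \ref{B} by observing that the mixed $(1,1)$ two-dimensional star has volume polynomial equal (up to a positive constant) to a product of positive linear forms in disjoint variables, which has signature $(1,1,r)$. Your derivation of that factorization via the vanishing of the self-intersection coefficients $a_\rho$ in Proposition~\ref{prop:2dvolume} is a slightly more explicit route to the same conclusion the paper reaches by factoring the volume polynomial of a product.
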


\begin{proof}
The key observation is that, for $\tau_1\in\Sigma_1$ and $\tau_2\in\Sigma_2$, we have
\begin{equation}\label{eq:products}
(\bSigma_1\times\bSigma_2)^{\tau_1\times\tau_2}=\bSigma_1^{\tau_1}\times\bSigma_2^{\tau_2}.
\end{equation}
One readily checks that $K(\Sigma_1\times\Sigma_2)=K(\Sigma_1)\times K(\Sigma_2)$, so the product is quasiprojective if and only if each factor is quasiprojective. By taking $\tau_1$ or $\tau_2$ to be maximal cones, we see from \eqref{eq:products} that the star fans of $\bSigma_1\times\bSigma_2$ are a superset of the star fans of $\bSigma_1$ and $\bSigma_2$, and since the Lorentzian condition is a stipulation on all of the star fans, this implies that $\bSigma_1$ and $\bSigma_2$ are Lorentzian whenever $\bSigma_1\times\bSigma_2$ is Lorentzian. To prove the converse, it suffices, by Theorem~\ref{thm:characterization}, to show that \ref{A} and \ref{B} are satisfied for all stars $(\bSigma_1\times\bSigma_2)^{\tau_1\times\tau_2}$ where neither $\tau_1$ nor $\tau_2$ is maximal. For \ref{A}, this amounts to the observation that any product of two positive dimensional fans remains connected after removing the origin; thus, $\bSigma_1\times\bSigma_2$ satisfies \ref{A} if both $\bSigma_1$ and $\bSigma_2$ satisfy \ref{A}. For \ref{B}, notice that the only case to consider is when $\dim(\tau_i)=\dim(\Sigma_i)-1$ for $i=1,2$. In this case, one readily verifies that, for any markings $u_i$ on $\Sigma_i$, we have
\[
\Vol_{(\bSigma_1(u_1)\times\bSigma_2(u_2))^{\tau_1\times\tau_2}}=\Vol_{\bSigma_1(u_1)^{\tau_1}}\Vol_{\bSigma_2(u_2)^{\tau_2}}.
\]
Since a product of linear forms in disjoint variables always has one positive eigenvalue (because $\ell_1\ell_2=\frac{1}{4}(\ell_1+\ell_2)^2-\frac{1}{4}(\ell_1-\ell_2)^2$), it follows that $\bSigma_1\times\bSigma_2$ satisfies \ref{B} if both $\bSigma_1$ and $\bSigma_2$ satisfy \ref{B}, completing the proof.
\end{proof}

To state the next property, we describe first describe an action of strictly convex divisors on tropical fans. Suppose that $\bSigma(u)$ is a tropical $d$-fan and $D\in K(\Sigma)$ is a strictly convex divisor. We define $D\cdot \bSigma(u)$ to be the tropical $(d-1)$-fan
\[
D\cdot\bSigma(u)=(\Sigma[d-1],u,D\cdot\omega).
\]
We now show that this action is compatible with the Lorentzian property.

\begin{proposition}\label{prop:actlorentzian}
If $\bSigma(u)$ is Lorentzian and $D\in K(\Sigma)$, then $D\cdot\bSigma(u)$ is Lorentzian.
\end{proposition}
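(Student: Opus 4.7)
The strategy is to verify the two conditions \ref{A} and \ref{B} of Theorem~\ref{thm:characterization} for the $(d-1)$-fan $D\cdot\bSigma(u)$. First I would check that $D\cdot\bSigma(u)=(\Sigma[d-1],u,D\cdot\omega)$ is indeed a quasi-projective tropical $(d-1)$-fan: positivity of $D\cdot\omega$ is immediate from the bullet points in Section~\ref{subsec:convex}, and $D$ itself restricts to a strictly convex divisor on $\Sigma[d-1]$, since for each $\tau\in\Sigma[d-1]$ we have $N_\tau\Sigma[d-1]\subseteq N_\tau\Sigma$, so strict positivity on the latter (coming from $D\in K(\Sigma)$) implies strict positivity on the former. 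The case $d\leq 2$ is immediate (one-dimensional fans are vacuously Lorentzian), so from now on I assume $d\geq 3$.

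The key geometric input will be the identity
\[
(D\cdot\bSigma(u))^\tau=\overline D\cdot\bSigma(u)^\tau\quad\text{for every }\tau\in\Sigma[d-1],
\]
where $\overline D\in K(\Sigma^\tau)$ is the strictly convex divisor on the star induced by $D$. I would verify this by choosing a PL representative of $D$ vanishing on $\tau$ and comparing, cone-by-cone, the formula for $(D\cdot\omega)(\hat\sigma)$ at a $(d-1)$-cone $\hat\sigma\succ\tau$ with the formula for $(\overline D\cdot\omega^\tau)(\sigma)$ at its image $\sigma$ in $\Sigma^\tau$. With this identity, condition \ref{B} for $D\cdot\bSigma(u)$ reduces, for each $\tau\in\Sigma$ of dimension $d-3$, to the claim that the $2$-fan $\overline D\cdot\bSigma(u)^\tau$ is Lorentzian. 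But $\bSigma(u)^\tau$ is a $3$-dimensional Lorentzian fan (stars of Lorentzian fans are Lorentzian, as noted after Definition~\ref{def:lorentzian}) and $\overline D\in K(\Sigma^\tau)$ since strict convexity passes to stars; applying the Lorentzian condition for $\bSigma(u)^\tau$ with the auxiliary divisor $E_3=\overline D$ gives precisely the required signature of the quadratic form $(D_1,D_2)\mapsto\deg_{\bSigma(u)^\tau}(\overline D\,D_1D_2)=\deg_{\overline D\cdot\bSigma(u)^\tau}(D_1D_2)$.

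For condition \ref{A}, that $\Sigma[d-1]$ is unpinched, I would first establish the following topological sublemma: if $\Sigma'$ is a pure simplicial $d'$-fan with $d'\geq 3$ and $|\Sigma'|\setminus\{0\}$ connected, then $|\Sigma'[d'-1]|\setminus\{0\}$ is also connected. The proof is a standard path-rerouting argument: any path in $|\Sigma'|\setminus\{0\}$ joining two points of $|\Sigma'[d'-1]|\setminus\{0\}$ can be deformed to avoid the relative interior of each $d'$-cone $\sigma$ it traverses by rerouting through $\partial\sigma\setminus\{0\}$, which is connected when $d'\geq 3$ because the facets of a simplicial $d'$-cone meet along $(d'-2)$-dimensional common faces of positive dimension. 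Applying this sublemma to $\Sigma'=\Sigma^\tau$ for $\tau\in\Sigma[d-1]$ of dimension $\leq d-3$---noting that $\Sigma^\tau$ is unpinched, hence has $|\Sigma^\tau|\setminus\{0\}$ connected, by the Lorentzian hypothesis on $\bSigma$---yields the desired connectedness $|(\Sigma[d-1])^\tau|\setminus\{0\}=|\Sigma^\tau[d^\tau-1]|\setminus\{0\}$, establishing \ref{A}. I expect the main obstacle to be carefully verifying the identity $(D\cdot\bSigma(u))^\tau=\overline D\cdot\bSigma(u)^\tau$, after which the rest is routine application of Theorem~\ref{thm:characterization}.
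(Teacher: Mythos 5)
Your proof is correct, but it takes a longer route than the paper's. The paper verifies Definition~\ref{def:lorentzian} directly: since $\Sigma[d-1]^\tau = \Sigma^\tau[d^\tau-1]$ and $\deg_{(D\cdot\bSigma(u))^\tau}(D_1\cdots D_{d^\tau-1}) = \deg_{\bSigma(u)^\tau}(D_1\cdots D_{d^\tau-1}\cdot\overline D)$ with $\overline D\in K(\Sigma^\tau)$, the quadratic forms attached to \emph{every} star of $D\cdot\bSigma(u)$ (not just the codimension-$2$ ones) are already among the quadratic forms attached to stars of $\bSigma(u)$, with $\overline D$ slotted in as one of the auxiliary divisors. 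That's the entire argument --- about five lines --- and it makes no appeal to Theorem~\ref{thm:characterization}.

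You instead invoke Theorem~\ref{thm:characterization} and check \ref{A} and \ref{B} separately. For \ref{B}, your key identity $(D\cdot\bSigma(u))^\tau = \overline D\cdot\bSigma(u)^\tau$ is correct (it follows from \eqref{eq:compatiblemodulestructurestar} and the formula preceding \eqref{eq:reduction}, exactly as you describe), and once it is established, your argument is the paper's argument specialized to $\tau\in\Sigma(d-3)$. Where the routes genuinely diverge is \ref{A}: the paper never needs to know that $\Sigma[d-1]$ is unpinched, while you must prove it, and you do so via a new topological sublemma (connectivity of $|\Sigma'[d'-1]|\setminus\{0\}$ when $d'\geq 3$ and $|\Sigma'|\setminus\{0\}$ is connected, proved by rerouting paths through the boundary of each maximal cone --- and you correctly flag the need for $d'\geq 3$, since the facets of a $2$-cone meet only at the origin). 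The sublemma is correct and its path-rerouting argument is sound, but it is work the direct proof avoids entirely. In short: your proof is valid and nicely showcases the characterization theorem, but the extra topological lemma is overhead that the paper's definition-chase sidesteps; if you want the shorter argument, notice that because $\overline D$ is strictly convex on \emph{every} star $\Sigma^\tau$ (not just the codimension-$2$ ones), you can verify the Lorentzian condition directly on all stars at once rather than reducing to the $2$-dimensional case.
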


\begin{proof}
Suppose that $\bSigma(u)$ is a Lorentzian $d$-fan and $D\in K(\Sigma)$. First of all, we observe that $D$ descends to an element of $K(\Sigma[d-1])$, so $\Sigma[d-1]$ is quasiprojective. Additionally, for any $\tau\in\Sigma[d-1]$, note that the star of the codimension-one skeleton is the codimension-one skeleton of the star:
\[
\Sigma[d-1]^\tau=\Sigma^\tau[d^\tau-1].
\]
It then follows from the definition of the $D(\Sigma)$-action on Minkowski weights that
\[
\deg_{D\cdot \bSigma(u)^\tau}(D_1\cdots D_{d^\tau-1})=\deg_{\bSigma(u)^\tau}(D_1\cdots D_{d^\tau-1}\cdot\overline D).
\]
Since $D\in K(\Sigma)$, it follows that $\overline D\in K(\Sigma^\tau)$, and thus, the quadratic forms in Definition~\ref{def:lorentzian} associated to $D\cdot \bSigma(u)^\tau$ are all special cases of the quadratic forms associated to $\bSigma(u)^\tau$, so the Lorentizan property for $D\cdot \bSigma(u)$ follows from the Lorentzian property for $\bSigma(u)$.
\end{proof}

Our final result in this section regards tropical modifications, which are yet another way of producing new tropical fans from old ones. The input of a tropical modification consists of a tropical $d$-fan $\bSigma=(\Sigma,\omega)$ in $V$ and a strictly convex piecewise-linear function $\phi\in PL(\Sigma)$; the output is a new tropical $d$-fan $M_\phi\bSigma$ in $V\times\R$. More precisely, the \textbf{tropical modification of $\bSigma$ along $\phi$} is defined by
\[
M_\phi\bSigma=(\Sigma_\phi,\omega_\phi)
\]
where the fan $\Sigma_\phi$ has cones
\[
\Sigma_\phi=\{\gamma_\phi(\tau)\mid\tau\in\Sigma\}\cup \{\tau_{(0,-1)}\mid \tau\in\Sigma\setminus\Sigma(d)\}
\]
where $\gamma_\phi(\tau)=\{(x,\phi(x))\in N_\R\times \R\mid x\in\tau\}$ and $\tau_{(0,-1)}=\{u+\lambda(0,-1)\mid u\in\tau,\lambda\in\R_{\geq 0}\}$, and the Minkowski weight $\omega_\phi$ is defined by
\[
\omega_\phi(\sigma)=\begin{cases}
\omega(\tau) &\text{if }\sigma=\gamma_\phi(\tau)\text{ for some }\tau\in\Sigma(d);\\
(\phi\cdot\omega)(\tau) &\text{if }\sigma=\tau_{(0,-1)}\text{ for some }\tau\in\Sigma(d-1).
\end{cases}
\]
It is straightforward to check that $\omega_\phi\in MW_d(\Sigma_\phi)$ is a positive Minkowski weight.

We note that the graph $\gamma_\phi(\Sigma)$ of $\Sigma$ under the function $\phi$ is contained within $\Sigma_\phi$, but is not a tropical fan with respect to the induced weights on the maximal cones---the tropical modification $M_\phi\bSigma$ is, in some sense, the unique minimal way of appending weighted cones to $\gamma_\phi(\Sigma)$ to obtain a tropical fan. Whenever $\phi_1$ and $\phi_2$ differ by a linear function $f\in L(\Sigma)$, we note that $M_{\phi_1}\bSigma$ and $M_{\phi_2}\bSigma$ are isomorphic under the linear automorphism
\begin{align*}
V\times\R&\rightarrow V\times\R\\
(v,w)&\mapsto (v,w+f(v)),
\end{align*}
so, up to linear isomorphism, we view the tropical modification as happening along a strictly convex divisor $D=[\phi]\in K(\Sigma)$ and we denote it by $M_D(\bSigma)$. The following result shows that the Lorentzian property is compatible with tropical modifications.

\begin{proposition}
If $\bSigma$ is a tropical fan and $D\in K(\Sigma)$, then $M_D\bSigma$ is Lorentzian if and only if $\bSigma$ is Lorentzian.
\end{proposition}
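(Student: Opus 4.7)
The plan is to invoke Theorem~\ref{thm:characterization} together with induction on $d=\dim\bSigma$, resting on two structural observations: an identification of the star fans of $M_D\bSigma$ in terms of star operations on $\bSigma$, and an isomorphism of divisor groups induced by the projection $\pi\colon V\times\R\to V$. Throughout, I fix a strictly convex representative $\phi\in PL(\Sigma)$ of $D$, normalized so that $\phi(0)=0$ and $\phi>0$ on $|\Sigma|\setminus\{0\}$; then $\tau_{(0,-1)}=\gamma_\phi(\tau)+\R_{\ge 0}(0,-1)$ and the face structure of $\Sigma_\phi$ is transparent.

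The first step is to compute the star fans of $M_D\bSigma$. Determining which cones of $\Sigma_\phi$ contain a given one and passing to the quotient by its span, I would establish the identifications
\[
(M_D\bSigma)^{\gamma_\phi(\tau)}\cong M_{\overline D}\bSigma^\tau \;\;(\tau\in\Sigma), \qquad (M_D\bSigma)^{\tau_{(0,-1)}}\cong \overline D\cdot\bSigma^\tau \;\;(\tau\in\Sigma\setminus\Sigma(d)),
\]
where $\overline D\in K(\Sigma^\tau)$ is represented by the piecewise linear function $\overline\phi$ induced by $\phi$ on $\Sigma^\tau$. The quotient $(V\times\R)/V_{\gamma_\phi(\tau)}$ is canonically $V^\tau\times\R$, under which $\gamma_\phi(\kappa)$ descends to $\gamma_{\overline\phi}(\overline\kappa)$ and $\kappa_{(0,-1)}$ to $\overline\kappa_{(0,-1)}$; the weights match because $(D\cdot\omega)(\kappa)=(\overline D\cdot\omega^\tau)(\overline\kappa)$.

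The second step is to verify that $\pi$ induces an isomorphism $\pi^*\colon D(\Sigma)\to D(\Sigma_\phi)$ preserving mixed degrees. Constructively, $\pi^*D_\rho=D_{\gamma_\phi(\rho)}$, while $\pi^*D=D_{(0,-1)}$ after a short calculation exhibiting the difference as the linear function $(v,w)\mapsto w$; a dimension count ($|\Sigma_\phi(1)|-(n+1) = |\Sigma(1)|-n$) then confirms $\pi^*$ is an isomorphism. To see mixed degrees are preserved, a top-degree product $X_{\gamma_\phi(\rho_1)}\cdots X_{\gamma_\phi(\rho_d)}$ in $A^d(\Sigma_\phi,u_\phi)$ vanishes unless the $\rho_i$ are the rays of some top cone $\kappa\in\Sigma(d)$, in which case it equals $X_{\gamma_\phi(\kappa)}$ and pairs with $\omega_\phi$ to yield $\omega(\kappa)$, matching the computation in $\bSigma$.

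With these tools, I induct on $d$. The base case $d\le 2$ is direct: Lorentzianness of a $2$-fan amounts to unpinchedness plus signature $(1,q,r)$ of the top quadratic form, both of which transfer between $\bSigma$ and $M_D\bSigma$ using $\pi^*$ and Lemma~\ref{lem:unpinched}. For the inductive step, $\bSigma$ is Lorentzian iff its top-level quadratic forms have signature $(1,q,r)$ and all its proper stars are Lorentzian, and similarly for $M_D\bSigma$. The top-level conditions correspond under $\pi^*$. Among proper stars of $M_D\bSigma$, stars of the form $M_{\overline D}\bSigma^\tau$ are Lorentzian iff $\bSigma^\tau$ is by the inductive hypothesis, while stars of the form $\overline D\cdot\bSigma^\tau$ are Lorentzian whenever $\bSigma^\tau$ is by Proposition~\ref{prop:actlorentzian}; the first family alone suffices to drive the equivalence in both directions. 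The main obstacle is ensuring the top-level transfer via $\pi^*$ cleanly respects the Lorentzian condition, which quantifies over $D_3,\dots,D_d\in K(\Sigma_\phi)$ rather than just $\pi^*K(\Sigma)$. The cleanest resolution is to reformulate via Proposition~\ref{prop:lorentzians}: the volume polynomials $\Vol_\bSigma$ and $\Vol_{M_D\bSigma}$ are related by an affine change of variables that aligns the strictly convex cones $K(\Sigma)$ and $K(\Sigma_\phi)$, and the $C$-Lorentzian property is preserved under such substitutions.
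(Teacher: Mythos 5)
Your argument rests on the same two facts that drive the paper's proof: the identification of the stars of $M_D\bSigma$ as either $M_{\overline D}\bSigma^\tau$ or $\overline D\cdot\bSigma^\tau$, and the projection-induced isomorphism $\pi^*\colon D(\Sigma)\to D(\Sigma_\phi)$, which the paper phrases as the Chow ring isomorphism $X_{u,\rho}\mapsto X_{u_\phi,\gamma_\phi(\rho)}$ commuting with the degree maps and identifying the strictly convex cones. The paper dispenses with the induction on $d$: it applies that isomorphism directly to each star $\bSigma^\tau$ to handle the stars of type $M_{\overline D}\bSigma^\tau$, and invokes Proposition~\ref{prop:actlorentzian} for those of type $\overline D\cdot\bSigma^\tau$; your inductive scaffolding is a harmless repackaging of the same idea. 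The $K$-cone concern you flag is the genuine technical point, but the resolution you sketch via volume polynomials and Proposition~\ref{prop:lorentzians} reduces to the same assertion the paper makes directly, namely that $\pi^*$ carries $K(\Sigma)$ onto $K(\Sigma_\phi)$; it does not sidestep that verification. Finally, the $d=2$ base case should not appeal to unpinchedness or Theorem~\ref{thm:characterization}: the latter is circular at $d=2$ (the only $(d-2)$-dimensional star is the trivial one), and $\Sigma\setminus\{0\}$ being disconnected does not imply $\Sigma_\phi\setminus\{0\}$ is, since the new ray $\R_{\geq 0}(0,-1)$ reconnects the components. For $d=2$ the star conditions in Definition~\ref{def:lorentzian} are vacuous, so the base case is simply that quasiprojectivity and the top-level signature both transfer under $\pi^*$.
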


\begin{proof}
Choose a representative $\phi$ for $D$ and a marking $u$ on $\Sigma$, and let $u_\phi$ be the associated marking on $\Sigma_\phi$:
\[
u_\rho=\{(u_\rho,\phi(u_\rho)\}\cup\{(0,-1)\}.
\] 
There are two key observations, both of which are straightforward to verify from the definitions. The first is that there is a natural isomorphism
\begin{align*}
A^\bullet(\Sigma,u)&\stackrel{\cong}{\longrightarrow}A^\bullet(\Sigma_\phi,u_\phi)\\
X_{u,\rho}&\longmapsto X_{u_\phi,\gamma_\phi(\rho)},
\end{align*}
that commutes with the maps $f_{\Sigma,u,\omega}$ and $f_{\Sigma_\phi,u_\phi,\omega_\phi}$ and identifies the cones of strictly convex divisors (via the isomorphisms $D(\Sigma)\cong A^1(\Sigma,u)$ and $D(\Sigma_\phi)\cong A^1(\Sigma_\phi,u_\phi)$). In particular, this shows that the quadratic forms associated to $(\Sigma,u,\omega)$ have exactly one positive eigenvalue if and only if the same holds for $(\Sigma_\phi,u_\phi,\omega_\phi)$. This conclusion does not apply to the quadratic forms associated to nontrivial stars of $\bSigma$ and $M_D\bSigma$; for that, we need the second key observation, which is that, for any $\tau\in \Sigma_D(k)$, 
\[
(M_D\bSigma)^\tau=\begin{cases}
(D\cdot \bSigma)^{\tau'} &\text{if }\tau=\tau'_{(0,-1)}\text{ for some }\tau'\in\Sigma(k-1),\\
M_{\overline D}(\bSigma^\tau) &\text{if } \tau=\gamma_\phi(\tau')\text{ for some }\tau'\in\Sigma(k).
\end{cases}
\]

Suppose that $\bSigma$ is Lorentzian. By the second observation, every star fan of $M_D\bSigma$ is either a star of $D\cdot\bSigma$ or a tropical modification of a star of $\bSigma$. In the first case, the associated quadratic forms have exactly one positive eigenvalue by Proposition~\ref{prop:actlorentzian}, while in the latter case, the associated quadratic forms have exactly one positive eigenvalue by the first observation and the assumption that $\bSigma$ is Lorentzian. Thus, $M_D\bSigma$ is Lorentzian.

Conversely, suppose that $M_D\bSigma$ is Lorentzian. By the second observation, every star of $\bSigma$ can be tropically modified by $\overline D$ to become a star of $M_D\bSigma$. The assumption that $M_D\bSigma$ is Lorentzian along with the first observation then imply that $\bSigma$ is Lorentzian.
\end{proof}

%
%

It would be natural, at this point, to continue our list of Lorentzian-preserving constructions by showing that the Lorentzian property is preserved by stellar subdivisions (it is!). We choose not to do that here, however, because our ultimate aim is to prove a stronger result: that the Lorentzian property does not actually depend on the fan structure at all, it only depends on the underlying \emph{tropical variety}. In order to state this result precisely, we take a detour in the next section to introduce the theory of \emph{tropical fan varieties}, before returning to the study of stellar subdivisions and the independence of the Lorentzian property on the fan structure in Section~\ref{sec:lorentzianfansets}.

\section{Minkowski weights on fan sets and tropical fan varieties}\label{sec:variety}

In this section, we introduce a general theory of Minkowski weights on fan sets. Because we do not require the supports of our Minkowski weights to be rational fans, the notions developed here generalize the \emph{affine tropical cycles} introduced by Allerman and Rau \cite{AllermanRau}, which built on prior work of Gathmann, Kerber, and Markwig \cite{GathmannKerberMarkwig}. Many of the arguments from those earlier papers carry over in a more-or-less straightforward way to the general setting, but since the development here seems sufficiently novel, we carefully spell out the details of the most important aspects.

\subsection{Minkowski weights on vector spaces} 

Let $V$ be an $n$-dimensional real vector space. A \textbf{marked Minkowski $d$-weight on $V$} is a triple $(\Sigma,u,\omega)$ where 
\begin{itemize}
\item $\Sigma$ is a simplicial $d$-fan, 
\item $u$ is a marking of $\Sigma$, and 
\item $\omega\in MW_d(\Sigma,u)$ is a Minkowski $d$-weight on the marked fan $(\Sigma,u)$. 
\end{itemize}
We define the \textbf{support} of a marked Minkowski $d$-weight on $V$ to be the support of the subfan where $\omega$ is nonzero:
\[
|(\Sigma,u,\omega)|=\bigcup_{\sigma\in\Sigma(d)\atop \omega(\sigma)\neq 0}\sigma.
\]
Two marked Minkowski $d$-weights $(\Sigma_1,u_1,\omega_1)$ and $(\Sigma_2,u_2,\omega_2)$ are \textbf{equivalent} if
\[
|(\Sigma_1,u_1,\omega_1)|=|(\Sigma_2,u_2,\omega_2)|
\]
and, for every pair of cones $\sigma_1\in\Sigma_1(d)$ and $\sigma_2\in\Sigma_2(d)$ such that
\begin{enumerate}
\item[(i)] $\sigma_i\subseteq |(\Sigma_i,u_i,\omega_i)|$ for $i=1,2$ and
\item[(ii)] $\sigma_1^\circ\cap\sigma_2^\circ\neq\emptyset$,
\end{enumerate}
we have
\begin{equation}\label{eq:volumeratio}
\omega_1(\sigma_1)=\frac{\vol(\sigma_1,u_1)}{\vol(\sigma_2,u_2)}\omega_2(\sigma_2),
\end{equation}
where $\frac{\vol(\sigma_1,u_1)}{\vol(\sigma_2,u_2)}$ denotes the absolute value of the determinant of any linear tranformation of $V_{\sigma_1}=V_{\sigma_2}$ that takes the markings of $\sigma_2$ to the markings of $\sigma_1$. Geometrically, notice that $\frac{\vol(\sigma_1,u_1)}{\vol(\sigma_2,u_2)}$ is the just ratio of the volumes of the parallelepipeds determined by the markings on $\sigma_1$ and $\sigma_2$, respectively, justifying the notation. While the volumes, themselves, depend on a metric, the ratio does not, and the condition \eqref{eq:volumeratio} can interpreted as the imposition that the weights are equal after normalizing by the volume of the associated parallelepipeds.

We define a \textbf{Minkowski $d$-weight on $V$} to be an equivalence class of marked Minkowski $d$-weights on $V$, and we denote the set of Minkowski $d$-weights on $V$ by $MW_d(V)$. We aim to endow $MW_d(V)$ with the structure of a real vector space; the following result is the key observation that we need.

\begin{proposition}\label{prop:fansets}
If $(\Sigma,u,\omega)$ is a marked Minkowski $d$-weight and $(\Sigma',u')$ is any marked simplicial fan with $|\Sigma'|=|\Sigma|$, then there exists a unique $\omega'\in MW_d(\Sigma',u')$ such that $(\Sigma,u,\omega)$ and $(\Sigma',u',\omega')$ are equivalent.
\end{proposition}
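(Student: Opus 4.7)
The plan is to establish uniqueness directly from the defining formula \eqref{eq:volumeratio}, and then prove existence by reducing to the refinement case via a common simplicial refinement of $\Sigma$ and $\Sigma'$.

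For uniqueness, fix $\sigma' \in \Sigma'(d)$. If $(\sigma')^\circ$ is disjoint from the support $|(\Sigma,u,\omega)|$, then equality of supports forces $\omega'(\sigma') = 0$; otherwise $(\sigma')^\circ \cap \sigma^\circ \neq \emptyset$ for some $\sigma \in \Sigma(d)$ with $\omega(\sigma) \neq 0$, and \eqref{eq:volumeratio} determines $\omega'(\sigma')$ explicitly. Well-definedness when $(\sigma')^\circ$ meets several such $\sigma^\circ$ is a short check: all such $\sigma$ have the same linear span $V_{\sigma'}$, and the balancing of $\omega$ at the codim-one cones of $\Sigma$ separating them inside $V_{\sigma'}$ forces the assigned densities to match, so \eqref{eq:volumeratio} yields a single value.

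For existence, I will choose a common simplicial refinement $\Sigma''$ of $\Sigma$ and $\Sigma'$ with some marking $u''$. Such a $\Sigma''$ exists by a standard construction: intersect the cones of $\Sigma$ with those of $\Sigma'$ to obtain a polyhedral common refinement, then simplicially subdivide. The argument then reduces to two claims: (R)~if $\Sigma_f$ is a simplicial refinement of $\Sigma_c$ with markings and $\omega_c$ is a Minkowski weight on $(\Sigma_c,u_c)$, then the function $\omega_f$ on $\Sigma_f(d)$ defined via \eqref{eq:volumeratio} is itself a Minkowski weight; and (C)~the Minkowski weight $\omega''$ produced by applying (R) to $(\Sigma,u,\omega)$ descends, via \eqref{eq:volumeratio} viewed in the opposite direction, to a Minkowski weight on $\Sigma'$. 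Applying (R) to pass from $\Sigma$ to $\Sigma''$ and then (C) to pass from $\Sigma''$ down to $\Sigma'$ yields the desired $\omega'$.

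The technical heart is (R), which amounts to checking the balancing \eqref{eq:balancingcondition} at each $\tau_f \in \Sigma_f(d-1)$. Let $\tau_c$ be the minimal cone of $\Sigma_c$ with $\tau_f^\circ \subseteq \tau_c^\circ$. Case (a): if $\dim\tau_c = d-1$, then the maximal cones $\sigma_f \succ \tau_f$ biject with the maximal cones $\sigma_c \succ \tau_c$ via $\sigma_f \subseteq \sigma_c$. Projecting the balancing sum for $\omega_f$ at $\tau_f$ to the one-dimensional quotient $V/V_{\tau_c}$, the volume-ratio factors from \eqref{eq:volumeratio} combine with the scalars relating $u_{f,\sigma_f\setminus\tau_f}$ to $u_{c,\sigma_c\setminus\tau_c}$ in this quotient into a single positive constant independent of $\sigma_c$, so the projected sum is a scalar multiple of the projected balancing sum for $\omega_c$ at $\tau_c$, which vanishes by hypothesis. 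Case (b): if $\dim\tau_c = d$, then $\tau_f$ is internal to the maximal cone $\tau_c$, all cones $\sigma_f \succ \tau_f$ lie inside $\tau_c$ with the same intrinsic density inherited from $\omega_c(\tau_c)$, and the balancing reduces to the elementary two-term identity for two simplicial $d$-cones in a fixed linear space meeting along a common facet with weights proportional to their volumes. Claim (C) follows by observing that balancing of $\omega''$ at codim-one cones of $\Sigma''$ lying internal to maximal cones of $\Sigma'$ forces its intrinsic density to be constant on each such maximal cone, so the coarsening via \eqref{eq:volumeratio} is well-defined, and then the computation of case (a) can be reversed to yield balancing on $\Sigma'$. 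The main obstacle is the bookkeeping in case (a) of (R): verifying that the volume ratios from \eqref{eq:volumeratio} and the marker-projection scalars collapse into a single common multiplicative constant is where the simpliciality hypothesis does its essential work, and the rest of the proof is essentially formal once this local computation is in hand.
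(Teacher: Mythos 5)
Your proposal is correct and follows essentially the same strategy as the paper's proof: pass to a common simplicial refinement, then handle the refinement direction (your claim (R), with cases (a)/(b) matching the paper's subcases (1a)/(1b)) and the coarsening direction (your claim (C), matching the paper's Case~2), verifying the balancing condition at each $(d-1)$-cone through the same volume-ratio linear algebra identities. The only organizational difference is that you place the density-matching ``well-definedness'' argument within the uniqueness discussion, whereas the paper embeds it inside the coarsening case; the underlying computation is identical.
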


\begin{proof}

Since any two simplicial fans with the same support admit a common refinement by a simplicial fan, it suffices to consider two cases:
\begin{enumerate}
\item $\Sigma'$ is a refinement of $\Sigma$, and
\item $\Sigma$ is a refinement of $\Sigma'$.
\end{enumerate}
The considerations in each of these cases is similar to the discussions in \cite{GathmannKerberMarkwig}.

If $\Sigma'$ refines $\Sigma$ and $\sigma'\in\Sigma'(d)$, then there is a unique $\sigma\in\Sigma(d)$ such that $\sigma'\subseteq\sigma$; define
\[
\omega'(\sigma')=\frac{\vol(\sigma',u')}{\vol(\sigma,u)}\omega(\sigma).
\]
In fact, we \emph{must} define $\omega'$ in this way in order for $(\Sigma',u',\omega')$ to be equivalent to $(\Sigma,u,\omega)$; what's not immediately apparent is that $\omega'$ is actually a Minkowski weight on $(\Sigma',u')$. To check that $\omega'\in MW_d(\Sigma',u')$, let $\tau'\in\Sigma'(d-1)$. We consider two cases:
\begin{enumerate}
\item[(1a)] $\tau'\subseteq\tau$ for some $\tau\in\Sigma(d-1)$, and
\item[(1b)] $\tau'\not\subseteq\tau$ for any $\tau\in\Sigma(d-1)$.
\end{enumerate}

In Case (1a), note that there is a natural bijection between $\{\sigma'\in\Sigma'(d)\mid\tau'\preceq\sigma'\}$ and $\{\sigma\in\Sigma(d)\mid\tau\preceq\sigma\}$. A linear algebra argument shows that
\begin{equation}\label{eq:bc1}
\frac{\vol(\sigma',u')}{\vol(\sigma,u)}u_{\sigma'\setminus\tau'}'=\frac{\vol(\tau',u')}{\vol(\tau,u)}u_{\sigma\setminus\tau} \mod V_\tau,
\end{equation}
and it follows that the balancing condition \eqref{eq:balancingcondition} holds at $\tau'$ if and only if it holds at $\tau$. 

In Case (1b), there is a unique $\sigma\in \Sigma(d)$ such that $\tau'\subseteq\sigma$ and there are exactly two cones $\sigma_1',\sigma_2'\in\Sigma'(d)$ with $\tau'\preceq\sigma_1',\sigma_2'$, both contained within $\sigma$. Another linear algebra argument implies that
\begin{equation}\label{eq:bc2}
\frac{\vol(\sigma_1',u')}{\vol(\sigma,u)}u_{\sigma_1'\setminus\tau'}'+\frac{\vol(\sigma_2',u')}{\vol(\sigma,u)}u_{\sigma_2'\setminus\tau'}'=0 \mod V_\tau,
\end{equation}
which, upon multiplying by $\omega(\sigma)$, is equivalent to the balancing condition \eqref{eq:balancingcondition} at $\tau'$.

Next, we turn to the case where $\Sigma$ refines $\Sigma'$. For each $\sigma'\in\Sigma'(d)$, choose some $\sigma\in\Sigma(d)$ such that $\sigma\subseteq\sigma'$, and define
\[
\omega'(\sigma')=\frac{\vol(\sigma',u')}{\vol(\sigma,u)}\omega(\sigma).
\]
Again, this is how we \emph{must} define $\omega'$ if we want $(\Sigma',u',\omega')$ to be equivalent to $(\Sigma,u,\omega)$. We first argue that the definition of $\omega'$ is independent of the choice of $\sigma$; to do so, it suffices to show that we get the same value for $\omega'(\sigma')$ if we choose $\sigma_1,\sigma_2\subseteq\sigma'$ such that $\sigma_1\cap\sigma_2=\tau\in\Sigma(d-1)$. Note that the balancing condition \eqref{eq:balancingcondition} at $\tau$ implies that
\begin{equation}\label{eq:bc3}
\omega(\sigma_1)u_{\sigma_1\setminus\tau}+\omega(\sigma_2)u_{\sigma_2\setminus\tau}=0 \mod V_\tau.
\end{equation}
As in \eqref{eq:bc2}, we have
\begin{equation}\label{eq:bc4}
\frac{\vol(\sigma_1,u)}{\vol(\sigma',u')}u_{\sigma_1\setminus\tau}+\frac{\vol(\sigma_2,u)}{\vol(\sigma',u')}u_{\sigma_2\setminus\tau}=0 \mod V_\tau
\end{equation}
Combining \eqref{eq:bc3} and \eqref{eq:bc4} implies that 
\[
\frac{\vol(\sigma',u')}{\vol(\sigma_1,u)}\omega(\sigma_1)=\frac{\vol(\sigma',u')}{\vol(\sigma_2,u)}\omega(\sigma_2),
\]
showing that $\omega'(\sigma')$ does not depend on the choice of $\sigma\subseteq\sigma'$

To check that $\omega'\in MW_d(\Sigma',u')$, let $\tau'\in\Sigma'(d-1)$. There must be some $\tau\in\Sigma(d-1)$ with $\tau\subseteq\tau'$, and the argument in (1a) shows that  the balancing condition \eqref{eq:balancingcondition} holds at $\tau'$ if and only if it holds at $\tau$, completing the proof of Case (2).
\end{proof}

We now describe the vector space structure on $MW_d(V)$.

\begin{proposition}
The set of Minkowski $d$-weights on $V$ is a real vector space. More precisely, given $\Omega_1,\Omega_2\in MW_d(V)$ and $\lambda\in\R$, we define $\lambda\Omega_1+\Omega_2\in MW_d(V)$ by 
\[
\lambda\Omega_1+\Omega_2=[\Sigma,u,\lambda\omega_1+\omega_2]
\]
where $(\Sigma,u,\omega_i)$ represents $\Omega_i$ for $i=1,2$.
\end{proposition}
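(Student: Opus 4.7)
The plan is to reduce the entire vector space structure to the linearity of the balancing condition on a fixed marked simplicial fan. The key preliminary step, on which everything else rests, is to show that any two Minkowski $d$-weights on $V$ admit simultaneous representatives on a common marked simplicial fan $(\Sigma,u)$.

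First I would construct such a common representative. Starting from representatives $(\Sigma_i,u_i,\omega_i)$ of $\Omega_i$ for $i=1,2$, I would enlarge each $\Sigma_i$ to a simplicial fan $\hat\Sigma_i$ whose support contains $|\Sigma_1|\cup|\Sigma_2|$ (for concreteness, a simplicial completion with support $V$), extending $\omega_i$ by zero on the new top-dimensional cones. The balancing condition survives extension by zero: any $(d-1)$-cone $\tau\in\hat\Sigma_i$ either already lies in $\Sigma_i$, in which case the extra top cones around $\tau$ carry weight zero and do not disturb the original balance, or is genuinely new, in which case $\tau$ is not a face of any top cone of $\Sigma_i$ (since $\Sigma_i$ is closed under faces) and the condition holds trivially. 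I would then pick a simplicial fan $\Sigma$ refining both $\hat\Sigma_1$ and $\hat\Sigma_2$, select any marking $u$, and invoke Proposition~\ref{prop:fansets} to transport $\omega_1$ and $\omega_2$ to unique $\tilde\omega_1,\tilde\omega_2\in MW_d(\Sigma,u)$ representing $\Omega_1$ and $\Omega_2$.

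Once common representatives exist, the definition $\lambda\Omega_1+\Omega_2 \defeq [\Sigma,u,\lambda\tilde\omega_1+\tilde\omega_2]$ makes sense because the balancing condition \eqref{eq:balancingcondition} is $\R$-linear in $\omega$, so $\lambda\tilde\omega_1+\tilde\omega_2\in MW_d(\Sigma,u)$. To verify well-definedness, I would show that any two choices of common marked simplicial representatives $(\Sigma,u)$ and $(\Sigma',u')$ yield equivalent sums: both admit a common simplicial refinement (obtained by intersecting top cones and subdividing), and by the uniqueness half of Proposition~\ref{prop:fansets}, the transported weights on that refinement must coincide. Hence the class of the sum is independent of all choices.

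The vector space axioms (commutativity and associativity of addition, existence of additive identity and inverses, and the distributive and associative laws for scalar multiplication) then follow immediately, since on a fixed $(\Sigma,u)$ the set $MW_d(\Sigma,u)$ is a linear subspace of $\R^{\Sigma(d)}$ cut out by the balancing conditions, and the operations above agree with the inherited vector space structure; the zero element is represented by the empty Minkowski weight. The main obstacle in this plan is the first step, specifically the existence of simplicial completions and common simplicial refinements in the possibly irrational setting. These are standard facts in the rational case and carry over essentially verbatim, but some care is required to state them precisely without invoking a lattice; star subdivisions from interior rays of each non-complete boundary cone suffice to produce the needed completions, and the intersection-plus-triangulation trick produces the common refinement.
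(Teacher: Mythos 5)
Your proof follows the same overall architecture as the paper's — construct a common marked simplicial representative, define the sum there, verify well-definedness, appeal to linearity of balancing — but the two steps that require actual content are handled differently. For the common representative, the paper builds a fan directly on the fan set $|\Omega_1|\cup|\Omega_2|$ containing subfans supported on each $|\Omega_i|$, citing \cite[Construction 2.13]{AllermanRau}, then triangulates; you instead complete each $\Sigma_i$ to a simplicial fan with support $V$ and extend by zero. Your extension-by-zero argument is correct, but the existence of simplicial completions of arbitrary (possibly irrational) simplicial fans is a nontrivial fact, and the parenthetical sketch via ``star subdivisions from interior rays of each non-complete boundary cone'' does not actually prove it. Since you only need a simplicial fan whose support \emph{contains} $|\Sigma_1|\cup|\Sigma_2|$, the paper's fan-set construction is less demanding than full completion and avoids this issue.

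For well-definedness, the paper compares $(\Sigma,u,\lambda\omega_1+\omega_2)$ and $(\Sigma',u',\lambda\omega_1'+\omega_2')$ directly against the definition of equivalence — checking the volume-ratio identity \eqref{eq:volumeratio} on overlapping top cones in the support — which makes sense even when $|\Sigma|\neq|\Sigma'|$. You instead pass to a common simplicial refinement of $\Sigma$ and $\Sigma'$ and invoke the uniqueness clause of Proposition~\ref{prop:fansets}. That argument requires $|\Sigma|=|\Sigma'|$, which holds if both are completions (your construction) but not for arbitrary representatives allowed by the statement; you also implicitly use that the transport map of Proposition~\ref{prop:fansets} is linear in the weight, which is true from the volume-ratio formula but should be said. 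Both points are repairable — you can first extend any two competing representatives by zero to fans with a common support — but as written there is a gap. The paper's direct verification is both shorter and support-agnostic, so it is worth noting that the equivalence relation was deliberately set up so that one does not need a common refinement to compare two marked Minkowski weights.
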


\begin{proof}
Given two Minkowski $d$-weights $\Omega_1,\Omega_2\in MW_d(V)$, we first argue that they can be represented on a common marked fan $(\Sigma,u)$. To do so, we begin by constructing a fan $\Sigma$ whose support is $|\Omega_1|\cup|\Omega_2|$ and such that $\Sigma$ contains two subfans $\Sigma_1,\Sigma_2\subseteq\Sigma$ with $|\Sigma_i|=|\Omega_i|$ (see, for example, \cite[Construction 2.13]{AllermanRau}). Upon triangulating, if necessary, we may assume that $\Sigma$ is simplicial, and we can choose any marking $u$ of $\Sigma$. By Proposition~\ref{prop:fansets}, each $\Omega_i$ has a unique representative supported on $(\Sigma_i,u)$, which we can extend by zero to all of $\Sigma$. Thus, we may represent both $\Omega_1$ and $\Omega_2$ on a common marked simplicial fan $(\Sigma,u)$.

Next, we argue that the operation is well-defined. If $(\Sigma',u')$ is any other choice of marked simplicial fan on which we can represent $\Omega_1$ and $\Omega_2$, then Proposition~\ref{prop:fansets} ensures that there exists a unique $\omega_i'$ such that $(\Sigma,u,\omega_i)$ is equivalent to $(\Sigma',u',\omega_i')$. By definition of equivalence, we know that, for any $\sigma\in\Sigma(d)$ and $\sigma'\in\Sigma'(d)$ with $\sigma^\circ\cap\sigma'^\circ\neq\emptyset$, $\omega_i(\sigma)\neq 0$, and $\omega_i'(\sigma')\neq 0$, we have
\[
\omega_i(\sigma)=\frac{\vol(\sigma,u)}{\vol(\sigma',u')}\omega_i'(\sigma').
\]
Adding and scaling these relations, we then conclude that $(\Sigma,u,\lambda\omega_1+\omega_2)$ is equivalent to $(\Sigma',u',\lambda\omega_1'+\omega_2')$, as desired. The vector space axioms are straightforward to verify.
\end{proof}

We denote the associated graded vector space of Minkowski weights on $V$ by
\[
MW_\bullet(V)=\bigoplus_{d=0}^n MW_d(V).
\]

\subsection{Minkowski weights on fan sets} 

A \textbf{$d$-fan set $\cX\subseteq V$} is a subset of $V$ that can be realized as the support of a $d$-fan in $V$. A \textbf{Minkowski $k$-weight on a $d$-fan set $\cX\subseteq V$} is any Minkowski weight in $MW_k(V)$ that is supported within $\cX$. Denote the subspace of Minkowski $k$-weights on $\cX$ by
\[
MW_k(\cX)\subseteq MW_k(V)
\]
and the associated graded vector space by
\[
MW_\bullet(\cX)=\bigoplus_{k=0}^d MW_d(\cX). 
\]
The following result describes how the general Minkowski weights of this section relate to the Minkowski weights on fans studied in Subsection~\ref{subsec:minkowskiweights}.

\begin{proposition}\label{prop:fansetsweights}
If $\cX$ is a $d$-fan set and $\Sigma$ is a simplicial $d$-fan such that $|\Sigma|=\cX$, then there is a canonical inclusion
\[
MW_k(\Sigma)\subseteq MW_k(\cX)
\]
for every $k\leq d$ that is an isomorphism when $k=d$.
\end{proposition}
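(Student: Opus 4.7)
The plan is to explicitly define a natural injection $\iota\colon MW_k(\Sigma)\to MW_k(\cX)$ by restriction to top cones, and then to verify surjectivity when $k=d$ using Proposition~\ref{prop:fansets}. Given a class $[\omega,u]\in MW_k(\Sigma)$ with representative $(\omega,u)$, let $\Sigma_\omega\subseteq\Sigma$ denote the simplicial $k$-subfan consisting of those $\sigma\in\Sigma(k)$ with $\omega(\sigma)\neq 0$, together with all of their faces. The balancing condition on $\omega$ at any $\tau\in\Sigma_\omega(k-1)$ is inherited directly from the balancing condition on $\Sigma$, so the triple $(\Sigma_\omega,u|_{\Sigma_\omega(1)},\omega|_{\Sigma_\omega(k)})$ is a marked Minkowski $k$-weight on $V$ whose support lies in $|\Sigma|=\cX$. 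Its $V$-equivalence class thus defines an element of $MW_k(\cX)$, and we set $\iota([\omega,u])$ to be this class.

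The first task is to verify that $\iota$ descends to $\Sigma$-equivalence classes and is injective. Given two $\Sigma$-equivalent representatives $(\omega_1,u_1)$ and $(\omega_2,u_2)$ with $u_2=\lambda u_1$ and $\omega_2=\lambda^{-1}\cdot\omega_1$, the subfans $\Sigma_{\omega_1}$ and $\Sigma_{\omega_2}$ coincide, so the two marked weights share a common support; one then checks the volume-ratio condition of Section~\ref{sec:variety} on each top cone by tracking the scaling factors $\prod_{\rho\in\sigma(1)}\lambda_\rho$ through the weight transformation and the parallelepiped volume simultaneously. For injectivity, one represents any two distinct $\Sigma$-classes on a common marking, whereupon the $V$-equivalence on a single marked fan collapses to pointwise equality of weight functions, so distinct $\Sigma$-classes yield distinct $V$-classes.

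The isomorphism statement when $k=d$ is then a direct application of Proposition~\ref{prop:fansets}. Given any $\Omega\in MW_d(\cX)$ represented by $(\Sigma',u',\omega')$ with $|\Sigma'|\subseteq\cX$, one extends $\omega'$ by zero on any missing top cones and (if necessary) passes to a common refinement with $\Sigma$ so as to reduce to the case $|\Sigma'|=|\Sigma|=\cX$. Proposition~\ref{prop:fansets} then furnishes, for any chosen marking $u$ on $\Sigma$, a unique $\omega\in MW_d(\Sigma,u)$ such that $(\Sigma,u,\omega)$ is $V$-equivalent to $(\Sigma',u',\omega')$. The resulting class $[\omega,u]\in MW_d(\Sigma)$ satisfies $\iota([\omega,u])=\Omega$, which gives surjectivity; injectivity at $k=d$ is also immediate from the uniqueness part of Proposition~\ref{prop:fansets}.

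The principal technical obstacle throughout is the well-definedness check in the second paragraph: the $\Sigma$-equivalence of Subsection~\ref{subsec:minkowskiweights} uses the scaling $\omega\mapsto\lambda^{-1}\cdot\omega$ (dual to the Chow ring identification $X_{u,\rho}\leftrightarrow\lambda_\rho X_{u',\rho}$), whereas the $V$-equivalence of Section~\ref{sec:variety} uses the scaling dictated by parallelepiped volumes. Reconciling these two conventions and showing that they are compatible after restriction to $\Sigma_\omega$ is the step demanding the most careful bookkeeping of the multi-indexed factors, and it is the only nontrivial input to the proof beyond Proposition~\ref{prop:fansets}.
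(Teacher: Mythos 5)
Your proposal is correct and follows essentially the same route as the paper's (much terser) argument: define the map by sending a representative $(\omega,u)$ to the $V$-equivalence class of the associated marked Minkowski weight, and derive well-definedness, injectivity, and surjectivity at $k=d$ from the uniqueness and existence statements in Proposition~\ref{prop:fansets}. The only cosmetic divergence is that you represent the class on the subfan $\Sigma_\omega$ rather than on the $k$-skeleton $\Sigma[k]$ (which is what the paper implicitly does); both choices produce the same $V$-equivalence class, though $\Sigma[k]$ avoids the degenerate edge case $\omega\equiv 0$, where $\Sigma_\omega$ fails to be a $k$-fan.
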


\begin{proof}
For each $\omega\in MW_k(\Sigma,u)\cong MW_k(\Sigma)$, we map it to the equivalence class in $MW_k(\cX)$ represented by $(\Sigma,u,\omega)$. Proposition~\ref{prop:fansets} implies that this map is injective for all $k\leq d$ and also surjective for $k=d$. We note that the map is not surjective for $k<d$ because there are Minkowski $k$-weights that are supported within $\cX$ but not supported on $|\Sigma[k]|$.
\end{proof}

\subsection{Divisors and mixed degrees}

Let $\cX$ be a $d$-fan set in $V$. Given a continuous function $\phi:\cX\rightarrow\R$, we say that $\phi$ is \textbf{piecewise linear on $\cX$} if there is some fan $\Sigma$ with $|\Sigma|=\cX$ such that $\phi\in PL(\Sigma)$. Denote the piecewise linear functions on $\cX$ by $PL(\cX)$. We observe that $PL(\cX)$ is a vector space under the usual addition and scalar multiplication of functions to $\R$---the only subtle detail to note is that $PL(\cX)$ is closed under addition: if $\phi_1$ is piecewise linear on $\Sigma_1$ and $\phi_2$ is piecewise linear on $\Sigma_2$ with $|\Sigma_1|=|\Sigma_2|=\cX$, then $\phi_1+\phi_2$ is piecewise linear on the \textbf{intersection fan}
\[
\Sigma_1\wedge\Sigma_2=\{\sigma_1\cap\sigma_2\mid\sigma_1\in\Sigma_1,\sigma_2\in\Sigma_2\}.
\] 
Denote the subspace of linear functions on $\cX$ by $L(\cX)$, and define the \textbf{vector space of divisors on $\cX$} by the quotient
\[
D(\cX)=\frac{PL(\cX)}{L(\cX)}.
\]
For any divisor $D\in D(\cX)$ and fan $\Sigma$ with $|\Sigma|\subseteq\cX$, let $D_\Sigma$ denote the natural restriction of $D$ to $|\Sigma|$. We say that $\Sigma$ is \textbf{$D$-admissible} if $D_\Sigma\in D(\Sigma)$. The next result endows $MW_\bullet(\cX)$ with the structure of a $D(\cX)$-module.

\begin{proposition}\label{prop:modulestructure}
For any $d$-fan set $\cX\subseteq V$, the vector space $MW_{\bullet}(\cX)$ is a $D(\cX)$-module under a canonical action
\[
D(\cX)\times MW_\bullet(\cX)\rightarrow MW_{\bullet-1}(\cX).
\]
More precisely, if $D\in D(\cX)$ and $\Omega\in MW_k(\cX)$, we define $D\cdot\Omega\in MW_{k-1}(\cX)$ by
\[
D\cdot\Omega=[\Sigma[k-1],u,D_\Sigma\cdot\omega].
\]
where $(\Sigma,u,\omega)$ is a representative for $\Omega$ such that $\Sigma$ is $D$-admissible.
\end{proposition}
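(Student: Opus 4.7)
The proposition asserts three things: (i) every $\Omega \in MW_k(\cX)$ admits a representative $(\Sigma,u,\omega)$ with $\Sigma$ being $D$-admissible; (ii) the resulting class $[\Sigma[k-1],u,D_\Sigma\cdot\omega]$ depends only on $D$ and $\Omega$; and (iii) the action is bilinear. The unifying idea is that, thanks to Proposition~\ref{prop:fansets}, ambiguity of representatives can always be resolved by passing to a common simplicial refinement, so the plan is to reduce each claim to a computation on a single suitably chosen marked simplicial fan where the classical $D(\Sigma)$-module structure on $MW_\bullet(\Sigma,u)$ already exists.

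For (i), suppose $\Omega$ is represented by $(\Sigma_\Omega,u_\Omega,\omega_\Omega)$ and $D$ is represented by a piecewise linear function $\phi$ on some simplicial fan $\Sigma_\phi$ with $|\Sigma_\phi|=\cX$. Form a simplicial refinement $\Sigma$ of the intersection fan $\Sigma_\Omega\wedge \Sigma_\phi$, pick any marking $u$, and let $\omega\in MW_k(\Sigma,u)$ be the unique weight produced by Proposition~\ref{prop:fansets} with $(\Sigma,u,\omega)\sim(\Sigma_\Omega,u_\Omega,\omega_\Omega)$. Since every cone of $\Sigma$ is contained in a cone of $\Sigma_\phi$, the function $\phi$ is linear on each cone of $\Sigma$, so $D_\Sigma\in D(\Sigma)$ and $\Sigma$ is $D$-admissible.

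The main obstacle is (ii). Given two $D$-admissible representatives $(\Sigma_1,u_1,\omega_1)$ and $(\Sigma_2,u_2,\omega_2)$ of $\Omega$, I would pass to a common simplicial refinement $\Sigma$ with marking $u$; the argument of (i) shows $\Sigma$ is again $D$-admissible, and Proposition~\ref{prop:fansets} produces a unique $\omega\in MW_k(\Sigma,u)$ refining both $\omega_1$ and $\omega_2$. By transitivity of equivalence in $MW_{k-1}(V)$, it is enough to show that for a simplicial refinement $\Sigma$ of a $D$-admissible $(\Sigma_i,u_i)$, the two marked triples $(\Sigma[k-1],u,D_\Sigma\cdot\omega)$ and $(\Sigma_i[k-1],u_i,D_{\Sigma_i}\cdot\omega_i)$ satisfy \eqref{eq:volumeratio}. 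This is a cone-by-cone check using the explicit formula
\[
(D\cdot\omega)(\tau)=\sum_{\sigma\succ\tau}\phi_\sigma(\omega(\sigma)u_{\sigma\setminus\tau})-\phi_\tau\Big(\sum_{\sigma\succ\tau}\omega(\sigma)u_{\sigma\setminus\tau}\Big),
\]
for which I would separate two cases exactly as in the proof of Proposition~\ref{prop:fansets}: when a codimension-one cone $\tau\in\Sigma(k-1)$ refines some $\tau_i\in\Sigma_i(k-1)$, the volume identity \eqref{eq:bc1} together with the normalization $\omega(\sigma)=\tfrac{\vol(\sigma,u)}{\vol(\sigma_i,u_i)}\omega_i(\sigma_i)$ yields precisely the required volume-normalized equality of weights; when $\tau$ lies in the interior of a top-dimensional cone of $\Sigma_i$, the sum collapses by \eqref{eq:bc2} to a linear combination annihilated by the balancing condition, producing the zero contribution consistent with $(D_{\Sigma_i}\cdot\omega_i)$ being trivially supported. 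The bookkeeping across these two cases is the essential technical content.

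For (iii), once (ii) is in hand, bilinearity is purely formal: given finitely many divisors $D,D_1,D_2$ and weights $\Omega,\Omega_1,\Omega_2$, iterate the refinement construction to produce a single simplicial marked fan $(\Sigma,u)$ that is simultaneously $D$-, $D_1$-, $D_2$-, and $(\lambda_1 D_1+\lambda_2 D_2)$-admissible and that carries representatives of all three Minkowski weights. The identities $(\lambda_1 D_1+\lambda_2 D_2)\cdot \Omega=\lambda_1(D_1\cdot\Omega)+\lambda_2(D_2\cdot\Omega)$ and $D\cdot(\lambda_1\Omega_1+\lambda_2\Omega_2)=\lambda_1(D\cdot\Omega_1)+\lambda_2(D\cdot\Omega_2)$ then follow from bilinearity of the module structure $D(\Sigma)\times MW_\bullet(\Sigma,u)\to MW_{\bullet-1}(\Sigma,u)$ developed in Section~\ref{sec:setup}, applied termwise to the chosen representatives.
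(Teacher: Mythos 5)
Your decomposition into (i) existence of a $D$-admissible representative, (ii) well-definedness via reduction to a common refinement and a two-case cone-by-cone comparison, and (iii) bilinearity by a simultaneous refinement, matches the paper's argument almost exactly, and the reduction of (ii) to the single case where one fan refines the other, with \eqref{eq:bc1} driving the volume-normalization check, is precisely what the paper does.

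One small misattribution in the second sub-case of (ii): when $\tau\in\Sigma(k-1)$ has its relative interior inside a $k$-cone $\sigma_i\in\Sigma_i(k)$, the reason $(D_\Sigma\cdot\omega)(\tau)=0$ is not a consequence of \eqref{eq:bc2}, which is a statement about markings of a refined wall used to verify balancing in Proposition~\ref{prop:fansets}. The actual mechanism, as the paper states, is that $D$-admissibility of $\Sigma_i$ forces the representative $\phi$ to be linear across $\tau$ (all $\phi_\sigma$ for $\sigma\succ\tau$ are restrictions of the same linear map $\phi_{\sigma_i}$, which also restricts to $\phi_\tau$ on $V_\tau$), and then the balancing of $\omega$ at $\tau$ makes the two terms in the formula for $(D\cdot\omega)(\tau)$ cancel. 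Your phrase ``annihilated by the balancing condition'' obscures the essential role of linearity of $\phi$ across the new wall; without that linearity, balancing alone would not yield zero. This is a local fix to the explanation rather than a structural defect, but it should be made precise since it is exactly the point guaranteeing that $D_\Sigma\cdot\omega$ is supported on $|\Sigma_i[k-1]|$, which is what legitimizes comparing the two marked Minkowski weights under \eqref{eq:volumeratio}.
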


\begin{proof}
For any $D$ and $\Omega$, we first note that it is possible to find a representative $(\Sigma,u,\omega)$ for $\Omega$ for which $\Sigma$ is $D$-admissible: let $\Sigma_1$ be a fan with $|\Sigma_1|=|\Omega|$, let $\Sigma_2$ be a fan with $|\Sigma_2|=\cX$ on which $D$ can be represented by a piecewise linear function, and let $\Sigma$ be any simplicial refinement of the intersection fan $\Sigma_1\wedge\Sigma_2$.

To prove that the action is independent of the representative $(\Sigma,u,\omega)$, suppose that $(\Sigma',u',\omega')$ is another representative for which $\Sigma'$ is $D$-admissible; we must verify that
\begin{equation}\label{eq:equivalency}
(\Sigma[k-1],u,D_\Sigma\cdot\omega)\;\;\;\text{ is equivalent to }\;\;\;(\Sigma'[k-1],u',D_{\Sigma'}\cdot\omega').
\end{equation}
Since any two fans admit a common simplicial refinement, it suffices to assume that $\Sigma'$ refines $\Sigma$. We first note that the support of each of these marked Minkowski weights is contained in $\Sigma[k-1]$---this is because, by virtue of $\Sigma$ being $D$-admissible, $D$ is linear across any cone $\tau'\in\Sigma'[k-1]$ that is not contained in some $\tau\in\Sigma(k-1)$, so $(D_{\Sigma'}\cdot\omega')(\tau')=0$. Now given any $\tau'\in\Sigma'(k-1)$ and $\tau\in\Sigma(k-1)$ such that $\tau'^\circ\cap\tau^\circ\neq\emptyset$, we must have $\tau'\subseteq\tau$, in which case it follows from \eqref{eq:bc1} and the definition of the action of $D(\Sigma)$ on $MW_k(\Sigma)$ that
\[
(D_{\Sigma'}\cdot\omega')(\tau')=\frac{\vol(\tau',u')}{\vol(\tau,u)}(D_{\Sigma}\cdot\omega)(\tau),
\]
which verifies \eqref{eq:equivalency}. The module axioms are straightforward to verify.
\end{proof}

Given a $d$-fan set $\cX\subseteq V$, a Minkowski $d$-weight $\Omega\in MW_d(\cX)$, and a collection of $d$ divisors $D_1,\dots,D_d\in D(\cX)$, we note that $D_1\cdots D_d\cdot\Omega$ is a Minkowski $0$-weight, or in other words, a function $\{0\}\rightarrow\R$. We define the \textbf{mixed degree of $D_1,\dots,D_d$ with respect to $(\cX,\Omega)$} by
\[
\deg_{\cX,\Omega}(D_1\cdots D_d)=(D_1\cdots D_d\cdot\Omega)(0)\in\R.
\]

The next result computes the general mixed degrees defined here to those defined for fixed fans in Subsection~\ref{subsec:mixeddegrees}.

\begin{proposition}\label{prop:mixeddegrees}
Let $\cX$ be a $d$-fan set, $\Omega\in MW_d(\cX)$, and $D_1,\dots,D_d\in D(\cX)$. There exists a simplicial fan $\Sigma$ supported on $\cX$ that is $D_i$-admissible for all $i=1,\dots,d$. Moreover, given such a fan $\Sigma$ and the unique Minkowski weight $\omega\in MW_d(\Sigma)$ representing $\Omega$, we have
\[
\deg_{\cX,\Omega}(D_1\cdots D_d)=\deg_{\Sigma,\omega}(D_1\cdots D_d).
\]
\end{proposition}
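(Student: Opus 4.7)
The plan is to reduce everything to a single simplicial fan on which all of the data---$\Omega$ and each $D_i$---can be simultaneously represented, and then appeal to the previously established module structure on $MW_\bullet(\Sigma)$.

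For existence, for each $i=1,\dots,d$, choose a fan $\Sigma_i$ with $|\Sigma_i|=\cX$ on which $D_i$ is represented by a piecewise linear function. The iterated intersection fan $\Sigma_1\wedge\cdots\wedge\Sigma_d$ has support $\cX$, and each $D_i$ is piecewise linear on it, since each cone of the intersection sits inside a cone of every $\Sigma_i$. Passing to a simplicial refinement (every fan admits one) produces the desired simplicial $\Sigma$ with $|\Sigma|=\cX$ that is $D_i$-admissible for all $i$. By Proposition~\ref{prop:fansetsweights}, the canonical inclusion $MW_d(\Sigma)\hookrightarrow MW_d(\cX)$ is an isomorphism in top degree, so $\Omega$ is represented by a unique $\omega\in MW_d(\Sigma)$.

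For the degree identity, I would iterate Proposition~\ref{prop:modulestructure}. The key bookkeeping observation is that $D_i$-admissibility is inherited by skeletons: if $\phi\in PL(\Sigma)$ represents $D_i$, then $\phi|_{|\Sigma[k]|}\in PL(\Sigma[k])$ represents $(D_i)_{\Sigma[k]}$, so $\Sigma[k]$ is $D_i$-admissible for every $k\leq d$ and every $i$. Starting from the representative $(\Sigma,u,\omega)$ of $\Omega$ and applying Proposition~\ref{prop:modulestructure} repeatedly, we compute
\[
D_d\cdot \Omega = [\Sigma[d-1],u,D_d\cdot\omega],\;\;\; D_{d-1}\cdot D_d\cdot\Omega = [\Sigma[d-2],u,D_{d-1}\cdot D_d\cdot\omega],\;\;\;\dots
\]
and ultimately $D_1\cdots D_d\cdot\Omega = [\Sigma[0],u,D_1\cdots D_d\cdot\omega]$, where at each stage the skeleton in question is admissible for the next divisor to be applied, so no further refinement is needed. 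Evaluating the resulting Minkowski $0$-weight at the origin yields $\deg_{\cX,\Omega}(D_1\cdots D_d) = (D_1\cdots D_d\cdot\omega)(0) = \deg_{\Sigma,\omega}(D_1\cdots D_d)$ by definition \eqref{eq:degminkowski}.

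There is no real obstacle here---the statement is essentially a compatibility check between the fan-set framework and the single-fan framework. The only thing that requires a moment's care is the admissibility-of-skeletons observation above, which ensures that the iterated action can be carried out on the same fan $\Sigma$ rather than forcing us to refine further after each step and then compare representatives via Proposition~\ref{prop:fansets}.
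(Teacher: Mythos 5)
Your proof is correct and follows the same route as the paper's: construct a simplicial common refinement of the fans on which each $D_i$ is piecewise linear (you realize it as a simplicial refinement of the iterated intersection fan, which is what a common refinement is), then iterate the module action of Proposition~\ref{prop:modulestructure}. The paper dispatches the degree equality in one sentence; you usefully fill in the detail that $D_i$-admissibility passes to skeletons of $\Sigma$, so all $d$ applications of the action can be carried out on $\Sigma$ and its skeletons without re-refining, which is exactly the compatibility the paper is implicitly invoking.
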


\begin{proof}
To find such a fan $\Sigma$, let $\Sigma_1,\dots,\Sigma_d$ be fans supported on $\cX$ such that $\Sigma_i$ is $D_i$-admissible for each $i$, and let $\Sigma$ be any simplicial common refinement of $\Sigma_1,\dots,\Sigma_d$. The equality of mixed degrees then follows from the definition of the $D(\cX)$-module structure on $MW_\bullet(\cX)$ in Proposition~\ref{prop:modulestructure} in relation to the $D(\Sigma)$-module structure on $MW_\bullet(\Sigma)$.
\end{proof}

\subsection{Convex divisors} 

Let $\cX$ be a $d$-fan set in $V$. We say that a divisor $D\in D(\cX)$ is \textbf{convex} if $D_\Sigma$ is convex (in the sense of Subsection~\ref{subsec:convex}) for some $D$-admissible fan $\Sigma$ supported on $\cX$. The following observation shows that this notion does not depend on the choice of $D$-admissible fan $\Sigma$.

\begin{proposition}\label{prop:convexeverywhere}
Let $\cX$ be a fan set in $V$ and let $D\in D(\cX)$ be a convex divisor. Then $D_\Sigma$ is convex for every $D$-admissible fan $\Sigma$.
\end{proposition}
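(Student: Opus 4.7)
The plan is to extract a \emph{fan-independent} reformulation of convexity that decouples the property of a piecewise linear function from any specific choice of fan structure. Call $\phi \in PL(\cX)$ \emph{locally convex} if for every $x \in \cX$ there exist a neighborhood $U$ of $x$ in $\cX$ and a linear function $\psi \in V^\vee$ with $\phi(x) = \psi(x)$ and $\phi \geq \psi|_\cX$ on $U$. This is a property of $\phi$ as a function on $\cX$ alone, and it is manifestly preserved under adding any global linear function: replacing $\phi$ by $\phi + \eta|_\cX$ just shifts the supporting functional $\psi$ to $\psi + \eta$.

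The key lemma to establish is: for any simplicial fan $\Sigma$ with $|\Sigma| = \cX$ and any $\phi \in PL(\Sigma)$, the class $[\phi] \in D(\Sigma)$ is convex in the sense of Subsection~\ref{subsec:convex} if and only if $\phi$ is locally convex on $\cX$. The forward direction is immediate: for $x \in \cX$ with minimal containing cone $\tau \in \Sigma$ (so $x \in \tau^\circ$), convexity supplies a representative $\phi_\tau$ of $[\phi]$ that vanishes on $\tau$ and is nonnegative on $|N_\tau\Sigma|$; a short argument using that $x \in \tau^\circ$ shows $|N_\tau\Sigma|$ contains a neighborhood of $x$ in $\cX$, and writing $\phi_\tau = \phi - \psi|_\cX$ for some $\psi \in V^\vee$ exhibits local convexity at $x$ with supporting functional $\psi$.

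The reverse direction is the principal technical step. Given local convexity and a cone $\tau \in \Sigma$, pick $x \in \tau^\circ$, let $(\psi,U)$ witness local convexity at $x$, and set $\phi_\tau := \phi - \psi|_\cX$, which represents $[\phi]$, lies in $PL(\Sigma)$, is nonnegative on $U$, and vanishes at $x$. Two verifications are needed: (i) $\phi_\tau$ vanishes on all of $\tau$, which follows because $\phi_\tau$ is linear on $\tau$ and attains a local minimum of $0$ at the relative interior point $x$; and (ii) $\phi_\tau \geq 0$ on each $\pi \in N_\tau\Sigma$, for which one picks $\sigma \in \Sigma$ with $\tau,\pi \preceq \sigma$ (such a $\sigma$ exists by definition of $N_\tau\Sigma$), and for any $y \in \pi$ notes that the segment point $x + ty$ lies in the convex cone $\sigma$ for all $t \in [0,1]$ and in $U$ for small $t > 0$; linearity of $\phi_\tau$ on $\sigma$ together with $\phi_\tau(x) = 0$ then forces $\phi_\tau(y) = t^{-1}\phi_\tau(x+ty) \geq 0$.

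Granting the key lemma, the proposition follows at once. If $D$ is convex, witnessed by some $\phi_0 \in PL(\Sigma_0)$ on a $D$-admissible $\Sigma_0$, then $\phi_0$ is locally convex on $\cX$. For any other $D$-admissible fan $\Sigma_1$, any representative $\phi_1 \in PL(\Sigma_1)$ of $D$ differs from $\phi_0$ by an element of $L(\cX)$, so by the linear-shift invariance noted at the outset $\phi_1$ is also locally convex; applying the key lemma to $(\Sigma_1,\phi_1)$ yields that $D_{\Sigma_1}$ is convex, as required. The main obstacle is the reverse direction of the key lemma, specifically verifying (ii)---turning a pointwise near-$x$ inequality into an inequality over the full cones of $N_\tau\Sigma$---which is exactly where one must combine the convex cone structure with the PL hypothesis via the rescaling $y \mapsto x + ty$.
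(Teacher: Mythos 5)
Your proof is correct, but it takes a genuinely different route from the paper's. The paper's own argument is short and relies on two ingredients: (a) any two simplicial fans with the same support admit a common refinement, and (b) for a refinement $\Sigma'$ of a $D$-admissible $\Sigma$, the divisor $D_{\Sigma'}$ is convex on $\Sigma'$ if and only if $D_\Sigma$ is convex on $\Sigma$. With (b) in hand, convexity is transferred from any given $D$-admissible fan to a common refinement and then back down to any other $D$-admissible fan. The paper does not spell out a proof of (b). Your approach instead isolates a \emph{fan-independent} reformulation of convexity---the existence at each point of a locally supporting linear functional---and proves that it is equivalent to the fan-specific definition, for any fan supported on $\cX$; the proposition then follows directly without ever invoking refinements or W{\l}odarczyk-style comparisons of fans. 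The forward direction correctly uses the fact that $|N_\tau\Sigma|$ contains a neighborhood of any $x\in\tau^\circ$, and the reverse direction's rescaling argument $y\mapsto x+ty$ inside a common cone $\sigma\succeq\tau,\pi$ is exactly the right way to propagate the local inequality near $x$ out to all of $N_\tau\Sigma$; the verification that a linear function with a one-sided local minimum at a relative interior point must vanish on the whole cone is also sound. Your route is more self-contained and conceptually cleaner in that it exhibits the intrinsic, fan-free notion that underlies the statement, whereas the paper's route is shorter on the page but defers the content to the unproven refinement-invariance (b); indeed, a natural proof of (b) would essentially recapitulate your local-convexity argument, so your lemma can be seen as the mechanism hiding behind the paper's citation.
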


\begin{proof}
This follows from the fact that two fans on the same support admit a common refinement, along with the fact that, for any $D$-admissible fan $\Sigma$ and refinement $\Sigma'$ of $\Sigma$, the fan $\Sigma'$ is also $D$-admissible, and $D_{\Sigma'}$ is convex on $\Sigma'$ if and only if $D_\Sigma$ is convex on $\Sigma$.
\end{proof}

We let $\overline K(\cX)\subseteq D(\cX)$ denote the set of convex divisors. This is a cone---in the sense that it is closed under addition and positive scaling---but the reader should be warned that $D(\cX)$ is generally an infinite-dimensional vector space in this setting. Because strict convexity is not well-behaved with respect to refinement, it is not a natural notion to define in the general setting of fan sets.

Given a $d$-fan set $\cX\subseteq V$, we say that a Minkowski $d$-weight $\Omega\in MW_d(\cX)$ is \textbf{nonnegative} if it has a representative $(\Sigma,\omega)$ for which $\omega\in MW_d(\Sigma)$ is nonnegative. Being nonnegative is preserved by equivalence, so knowing that one representative is nonnegative implies that all representatives are nonnegative. In the general setting of fan sets, we have the following analogue of \eqref{eq:nonnegativity}.

\begin{proposition}
Given a $d$-fan set $\cX\subseteq V$, a nonnegative Minkowski weight $\Omega\in MW_d(\cX)$, and convex divisors $D_1,\dots,D_d\in\overline K(\cX)$, we have
\[
\deg_{\cX,\Omega}(D_1,\dots,D_d)\geq 0.
\]
\end{proposition}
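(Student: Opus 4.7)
The plan is to reduce this statement to the analogous inequality \eqref{eq:nonnegativity} that was already established for fixed simplicial fans. First I would invoke Proposition~\ref{prop:mixeddegrees} to produce a simplicial fan $\Sigma$ with $|\Sigma|=\cX$ that is simultaneously $D_i$-admissible for every $i=1,\dots,d$, together with the unique Minkowski weight $\omega\in MW_d(\Sigma)$ representing $\Omega$. By that same proposition, the mixed degree on the fan set is identified with the mixed degree on the fan:
\[
\deg_{\cX,\Omega}(D_1\cdots D_d)=\deg_{\Sigma,\omega}\bigl((D_1)_\Sigma\cdots (D_d)_\Sigma\bigr).
\]

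Next I would verify that the hypotheses of \eqref{eq:nonnegativity} hold on $(\Sigma,\omega)$. Convexity of each restriction $(D_i)_\Sigma\in\overline K(\Sigma)$ follows directly from Proposition~\ref{prop:convexeverywhere} applied to the $D_i$-admissible fan $\Sigma$. Nonnegativity of $\omega$ as an element of $MW_d(\Sigma)$ is inherited from the nonnegativity of $\Omega$: the equivalence relation defining Minkowski weights on $\cX$ rescales weights on top-dimensional cones by the strictly positive ratios $\vol(\sigma_1,u_1)/\vol(\sigma_2,u_2)$ appearing in \eqref{eq:volumeratio}, so if one representative of $\Omega$ is nonnegative then every representative is nonnegative, as already noted in the text immediately following the definition of nonnegativity. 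With both inputs in place, the conclusion follows at once from \eqref{eq:nonnegativity}.

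I do not anticipate a substantive obstacle: the entire argument is a bookkeeping step that stitches together three results proved earlier in this section. The only point that might feel delicate is the simultaneous $D_i$-admissibility of a single fan $\Sigma$, but this was already handled in Proposition~\ref{prop:mixeddegrees} by taking a simplicial common refinement of individually $D_i$-admissible fans.
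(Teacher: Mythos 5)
Your argument is correct and matches the paper's proof exactly: both invoke Proposition~\ref{prop:mixeddegrees} to pass to a simultaneously admissible quasi-representative, Proposition~\ref{prop:convexeverywhere} to transfer convexity, and then conclude via \eqref{eq:nonnegativity}. The paper compresses this into a single sentence, but the reasoning is the same.
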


\begin{proof}
Using Propositions~\ref{prop:mixeddegrees} and \ref{prop:convexeverywhere}, this is a consequence of Equation \eqref{eq:nonnegativity}.
\end{proof}

We say that $\bX=(\cX,\Omega)$ is a \textbf{tropical $d$-fan variety} if $\cX$ is $d$-fan set and $\Omega\in MW_d(\cX)$ is a nonnegative Minkowski weight such that $|\Omega|=\cX$. Our ultimate aim is to study mixed degrees of convex divisors on tropical fan varieties, and especially to explore when they satisfy Alexandrov--Fenchel type inequalities. We accomplish this in the next section by introducing the notion of Lorentzian fan varieties.

\section{Lorentzian fan varieties}\label{sec:lorentzianfansets}

We now come to the definition of Lorentzian fan varieties, which builds on our earlier definition of Lorentzian fans.

\begin{definition}
A \textbf{Lorentzian $d$-fan variety} is a tropical $d$-fan variety $\bX$ that can be represented by a Lorentzian fan.
\end{definition}

We have two primary goals in this section. Our first goal is to prove that $\bX$ is Lorentzian if and only if \emph{all} of its representatives on quasiprojective fans are Lorentzian fans. In other words, the Lorentzian property on quasiprojective tropical fans is independent of the fan structure. Our second goal is to use the independence on fan structures to prove an analogue of Proposition~\ref{prop:tropicalfanAF} for Lorentzian fan varieties, showing that mixed degrees of convex divisors on Lorentzian fan varieties satisfy Alexandrov--Fenchel type inequalities, and that the sequence of mixed degrees for any pair of convex divisors is log-concave and unimodal.

\subsection{The Lorentzian property descends to fan varieties}

The purpose of this subsection is to prove the following result.

\begin{theorem}\label{thm:support}
If $(\Sigma,u,\omega)$ is any representative of a Lorentzian $d$-fan variety $\bX$ for which $\Sigma$ is quasiprojective and $|\Sigma|=\cX$, then $(\Sigma,u,\omega)$ is Lorentzian.
\end{theorem}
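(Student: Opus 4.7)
The plan is to verify conditions \ref{A} and \ref{B} of Theorem~\ref{thm:characterization} for an arbitrary quasiprojective simplicial representative $(\Sigma,u,\omega)$ of $\bX$, given the existence of a Lorentzian representative $(\Sigma_0,u_0,\omega_0)$. The approach exploits the fact that both conditions are, in the end, properties of the underlying fan variety rather than the fan structure.

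For \ref{A}, unpinchedness is an intrinsic topological property of $\cX$: a simplicial fan $\Sigma'$ with $|\Sigma'|=\cX$ is unpinched if and only if, for every point $p\in\cX$ whose ``local codimension'' in $\cX$ is at least $2$, the link of $\cX$ at $p$ is connected. This characterization depends only on $\cX$, not on the fan structure, so the fact that $\Sigma_0$ is unpinched (by Lemma~\ref{lem:unpinched}) implies that $\Sigma$ is unpinched as well.

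For \ref{B}, I would reduce to the following 2D invariance statement: if $\bSigma$ is a 2D tropical fan with a $2$-cone $\sigma=\{\rho_1,\rho_2\}$ and $\bSigma'$ is obtained by inserting a new ray $\rho_0$ with marking $u'_{\rho_0}=\alpha u_{\rho_1}+\beta u_{\rho_2}$ (where $\alpha,\beta>0$), then $\bSigma$ is Lorentzian if and only if $\bSigma'$ is. Using Proposition~\ref{prop:2dvolume} and the balancing condition, one computes the new weights $\omega'(\sigma_1)=\omega(\sigma)/\beta$ and $\omega'(\sigma_2)=\omega(\sigma)/\alpha$, the new balancing coefficient $a'_{\rho_0}=\omega(\sigma)/(\alpha\beta)$, and the adjustments $a'_{\rho_i}=a_{\rho_i}+c_i\omega(\sigma)$ of the coefficients at $\rho_1,\rho_2$. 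A direct expansion in the change of variables $\tilde z=z'_{\rho_0}-\alpha z'_{\rho_1}-\beta z'_{\rho_2}$ then yields the clean decomposition
\[
\Vol_{\bSigma'}\;=\;-\tfrac{\omega(\sigma)}{\alpha\beta}\,\tilde z^{2}\;+\;\Vol_\bSigma(z'_{\rho_1},z'_{\rho_2},\ldots),
\]
so $\Hess(\Vol_{\bSigma'})$ is block-diagonal, with a strictly negative $1\times 1$ block and a block equal to $\Hess(\Vol_\bSigma)$. Consequently the two Hessians have the same number of positive eigenvalues, establishing the 2D invariance.

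The main obstacle is the combinatorial bookkeeping needed to translate the 2D invariance into the verification of \ref{B} for $(\Sigma,u,\omega)$. Using that any two simplicial fan structures on a common support are connected by stellar subdivisions, most 2D stars of $\Sigma$ are related to 2D stars of $\Sigma_0$ (or an intermediate common refinement) by a sequence of 2D stellar subdivisions in the appropriate quotient, to which the 2D invariance applies directly. The subtler case is the ``new'' 2D stars of $\Sigma'$ at codimension-$2$ cones containing a newly inserted ray, which do not correspond to a 2D star of $\Sigma$; these I would handle by recognizing that such a star decomposes, via a further sequence of 2D stellar subdivisions, as a configuration built from a product of $1$D fans, which is Lorentzian by an elementary direct check. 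Once this matching is complete, \ref{B} holds for $(\Sigma,u,\omega)$, completing the proof.
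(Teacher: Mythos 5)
Your proposal follows essentially the same route as the paper: verify conditions~\ref{A} and~\ref{B} of Theorem~\ref{thm:characterization}, noting that unpinchedness depends only on the support (as in Lemma~\ref{lem:pinchedsupport}), then connect $\Sigma$ to the given Lorentzian representative by stellar subdivisions via W{\l}odarczyk's theorem (Lemma~\ref{lem:factoring}), and propagate the $2$-dimensional Lorentzian condition across these subdivisions using an explicit ``one extra negative eigenvalue'' computation identical to Lemma~\ref{lem:2dedge}.

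The one place where you diverge from the paper, and where some care is needed, is in the treatment of the ``new'' $2$-dimensional stars that arise at codimension-$2$ cones containing the subdivision ray. The paper (Lemma~\ref{lem:starsofstellars}) classifies these stars into precisely two shapes---a ``spindle'' (two opposite rays each joined to a common set of other rays) and the normal fan of a triangle---and verifies the eigenvalue condition directly via Proposition~\ref{prop:2dvolume} (Lemmas~\ref{lem:spindle} and~\ref{lem:complete}). You instead propose to absorb these stars by further reducing them to products of $1$-dimensional fans. For the spindle case this is exactly right: that fan is literally a product of a complete line with a $1$-fan, and products of $1$-fans have one positive eigenvalue. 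For the triangle case, though, stellar subdividing alone will not turn the normal fan of a generic triangle into a product of lines---you would need a coarsening, or a common-refinement argument, or simply the direct check that all complete $2$-fans pass the Hessian test (which is the paper's route). This is a small imprecision, not a fatal gap, but the phrase ``via a further sequence of $2$D stellar subdivisions'' undersells the work: you either need to invoke W{\l}odarczyk again in the quotient (with coarsenings), or you need the elementary direct computation. In the spirit of making your plan rigorous, I would recommend proving the classification of $2$-dimensional stars of stellar subdivisions explicitly (as the paper does in Lemma~\ref{lem:starsofstellars}), since the ``combinatorial bookkeeping'' you defer is genuinely nontrivial and is where most of the content of this theorem lives.
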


We will leverage the characterization in Theorem~\ref{thm:characterization} to prove Theorem~\ref{thm:support}. Before presenting the proof, we discuss several preliminary results. The first preliminary result asserts that Property~\ref{A} of Theorem~\ref{thm:characterization} depends only on the support of a fan.

\begin{lemma}\label{lem:pinchedsupport}
If $\Sigma_1$ and $\Sigma_2$ are simplicial fans with $|\Sigma_1|=|\Sigma_2|$, then $\Sigma_1$ is unpinched if and only if $\Sigma_2$ is unpinched.
\end{lemma}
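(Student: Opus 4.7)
My plan is to reduce the lemma to the case where one fan refines the other. Since any two simplicial fans with the same support admit a common simplicial refinement, and refinement is transitive, it suffices to prove the equivalence assuming $\Sigma_2$ refines $\Sigma_1$. The main tool is that, for any $\tau \in \Sigma_i$ and $p \in \tau^\circ$, the star support equals $T_p|\Sigma|/V_\tau$, where $T_p|\Sigma|$ is the tangent cone to $|\Sigma|$ at $p$ (an object intrinsic to the support); moreover, $V_\tau$ is contained in the translation-invariance subspace of $T_p|\Sigma|$ (those $v \in V$ with $T_p|\Sigma| + \R v \subseteq T_p|\Sigma|$), so the relevant quotient maps have connected affine fibers.

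The easier direction is $\Sigma_2$ unpinched $\Rightarrow \Sigma_1$ unpinched. Given $\tau_1 \in \Sigma_1$ with $\dim \tau_1 \leq d-2$, I would choose a cone $\tau_2 \in \Sigma_2$ with $\tau_2^\circ \subseteq \tau_1^\circ$ and $\dim \tau_2 = \dim \tau_1$; such $\tau_2$ exists because $\tau_1^\circ$ is open in $V_{\tau_1}$ and is partitioned into relative interiors of $\Sigma_2$-cones, so a full-dimensional cone is needed to cover it. Then $V_{\tau_2} = V_{\tau_1}$ forces $|\Sigma_1^{\tau_1}| = |\Sigma_2^{\tau_2}|$, transferring connectedness directly.

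The harder direction, $\Sigma_1$ unpinched $\Rightarrow \Sigma_2$ unpinched, requires more care, because for $\tau_2 \in \Sigma_2$ with $p \in \tau_2^\circ$ and the cone $\tau_1 \in \Sigma_1$ satisfying $p \in \tau_1^\circ$, one may have $\dim \tau_2 < \dim \tau_1$, with no matching cone in $\Sigma_1$ of the same dimension. I would split on $\dim \tau_1$: when $\dim \tau_1 = d$, the tangent cone equals $V_{\tau_1}$ and so $|\Sigma_2^{\tau_2}| = V_{\tau_1}/V_{\tau_2}$ is a vector subspace of dimension at least $2$, visibly connected off origin; when $\dim \tau_1 = d-1$, the tangent cone is a union of half-space-like cones sharing $V_{\tau_1}$ as a common lineality, and each piece of the quotient by $V_{\tau_2}$ is connected of dimension $\geq 2$ with nontrivial common overlap, so the union is connected off the origin. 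The delicate case is $\dim \tau_1 \leq d-2$, where the unpinched hypothesis yields that $T_p|\Sigma|/V_{\tau_1} \setminus \{0\}$ is connected; pulling back through the open quotient with connected fibers $V_{\tau_1}$, this gives that $T_p|\Sigma| \setminus V_{\tau_1}$ is connected. Since $V_{\tau_1}$ has codimension at least two in the pure $d$-dimensional set $T_p|\Sigma|$, it is nowhere dense, so $T_p|\Sigma| \setminus V_{\tau_1}$ is dense in $T_p|\Sigma| \setminus V_{\tau_2}$, which is therefore also connected (adjoining points from the closure preserves connectedness); projecting by the quotient by $V_{\tau_2}$ then gives connectedness of $|\Sigma_2^{\tau_2}| \setminus \{0\}$. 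I expect the main obstacle to be this final density argument, which crucially depends on the codimension bound $\dim V_{\tau_1} \leq d-2$; without it, the more-quotiented set $T_p|\Sigma|/V_{\tau_1}$ could artificially identify separate components of the less-quotiented $T_p|\Sigma|/V_{\tau_2}$, breaking the implication.
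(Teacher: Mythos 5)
Your proof is correct, but it takes a genuinely different route from the paper's. The paper's proof is a one-observation argument: it asserts (without detailed justification) that a simplicial $d$-fan $\Sigma$ is unpinched if and only if, for every $v\in|\Sigma|$ and every linear subspace $L$ with $\dim L\le d-2$, the set $(U\setminus L)\cap|\Sigma|$ remains connected for all sufficiently small neighborhoods $U$ of $v$; since this characterization is phrased purely in terms of the support, the lemma follows immediately. Your argument instead passes to a common simplicial refinement, identifies $|\Sigma^\tau|$ with the quotient $T_p|\Sigma|/V_\tau$ of the intrinsic tangent cone, and then does a case analysis on $\dim\tau_1$ to compare stars of the coarser and finer fans. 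What the paper's formulation buys is brevity: by quantifying over all small subspaces $L$, it never has to match cones between the two fans or invoke W\l odarczyk-style refinement reductions. What your formulation buys is explicitness: the steps (full-dimensional $\tau_2^\circ\subseteq\tau_1^\circ$ in the easy direction; the product structure $T_p|\Sigma|\cong|\Sigma^{\tau_1}|\times V_{\tau_1}$, the density of $T_p|\Sigma|\setminus V_{\tau_1}$ in $T_p|\Sigma|\setminus V_{\tau_2}$ via the codimension-$\geq 2$ bound, and the separate treatment of $\dim\tau_1\in\{d-1,d\}$ in the hard direction) are all concretely checkable, and in fact constitute roughly the verification one would need to justify the paper's stated characterization. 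The one place I would tighten your write-up is the claim that the quotient $T_p|\Sigma|\setminus V_{\tau_1}\to|\Sigma_1^{\tau_1}|\setminus\{0\}$ is open with connected fibers: openness of the restricted projection is not automatic, but the conclusion you want (connectedness of the total space given connectedness of base and fibers) is cleanest via the $V_{\tau_1}$-invariance of $T_p|\Sigma|$, which gives a genuine product decomposition $T_p|\Sigma|\setminus V_{\tau_1}\cong(|\Sigma_1^{\tau_1}|\setminus\{0\})\times V_{\tau_1}$ after choosing a linear complement to $V_{\tau_1}$.
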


\begin{proof}
The lemma is a consequence of the observation that a simplicial fan $\Sigma$ is unpinched if and only if, for every $v\in|\Sigma|$ and any linear subspace $L$ of dimension at most $d-2$, the set $(U\setminus L)\cap|\Sigma|$ remains connected for any sufficiently small neighborhood $U$ of $v$. This characterization of being unpinched only depends on the support of the fan.
\end{proof}

Stellar subdivisions will play a central role in the proof of Theorem~\ref{thm:support}; we now recall the definition. Suppose that $\Sigma$ is a fan in $V$ and choose a nonzero vector $v\in|\Sigma|$. For any $\sigma\in\Sigma$ with $v\in\sigma$ and any face $\tau\preceq\sigma$ with $v\notin\tau$, define the cone
\[
\tau_v=\Big\{au+bv\mid u\in\tau\text{ and }a,b\in\R_{\geq 0}\Big\}.
\]
The \textbf{stellar subdivision of $\Sigma$ at $v$}, denoted $\Sigma_v$, is the fan in $V$ defined by
\[
\Sigma_v=\{\sigma\mid\sigma\in\Sigma\text{ and }v\notin\sigma\}\cup\{\tau_v\mid \tau\preceq\sigma\text{ for some }\sigma\in\Sigma\text{ with }v\in\sigma\setminus\tau\}.
\]

The following result of W{\l}odarczyk \cite{Wlodarczyk} is an important tool regarding stellar subdivisions. We note that W{\l}odarczyk states this result for rational fans in vector spaces over $\Q$, but the methods readily generalize to $\R$ (see the note after Theorem~8.1 in \cite{Wlodarczyk}).

\begin{lemma}[\cite{Wlodarczyk}, Theorem A]\label{lem:factoring}
If $\Sigma$ and $\Sigma'$ are simplicial fans with the same support, then there exists a sequence of simplicial fans $\Sigma_0,\dots,\Sigma_m$ such that
\begin{enumerate}
\item $\Sigma_0=\Sigma$ and $\Sigma_m=\Sigma'$, and
\item $\Sigma_i$ is a stellar subdivision of $\Sigma_{i+1}$ or $\Sigma_{i+1}$ is a stellar subdivision of $\Sigma_i$ for every $i$.
\end{enumerate}
\end{lemma}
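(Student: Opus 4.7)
The plan is to reduce to the case of refinements and then induct on the number of new rays. Given two simplicial fans $\Sigma$ and $\Sigma'$ with $|\Sigma|=|\Sigma'|$, I would first construct a simplicial common refinement $\tilde\Sigma$: start with the intersection fan $\Sigma\wedge\Sigma'=\{\sigma\cap\sigma'\mid\sigma\in\Sigma,\sigma'\in\Sigma'\}$ and take any simplicial subdivision of it (no new rays are needed beyond what is needed to triangulate; in particular, in the simplicial setting, one can arrange $\tilde\Sigma(1)=\Sigma(1)\cup\Sigma'(1)$ plus possibly some interior rays). It then suffices to produce a sequence connecting $\Sigma$ to $\tilde\Sigma$ and a sequence connecting $\Sigma'$ to $\tilde\Sigma$; concatenating the first with the reverse of the second yields the required sequence, since condition (2) in the lemma allows elementary moves in either direction.

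Next, assume $\tilde\Sigma$ refines $\Sigma$, and induct on $N=|\tilde\Sigma(1)\setminus\Sigma(1)|$. When $N=0$, simpliciality forces each cone of $\tilde\Sigma$ contained in a cone of $\Sigma$ to be determined by its rays, so $\tilde\Sigma=\Sigma$. For the inductive step, the crucial point is the following.

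\medskip
\noindent\textbf{Key Lemma.} \emph{There exists $v\in\tilde\Sigma(1)\setminus\Sigma(1)$ such that $\tilde\Sigma$ also refines the stellar subdivision $\Sigma_v$.}

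\medskip
Granting this, $\Sigma_v$ is simplicial (stellar subdivision of a simplicial fan is simplicial), $|\Sigma_v|=|\Sigma|=|\tilde\Sigma|$, and $|\tilde\Sigma(1)\setminus\Sigma_v(1)|=N-1$, so the induction hypothesis supplies a sequence from $\Sigma_v$ to $\tilde\Sigma$. Prepending the single stellar subdivision $\Sigma\to\Sigma_v$ gives the desired sequence.

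The main obstacle is the Key Lemma, which requires choosing $v$ carefully: an arbitrary new ray need not satisfy the refinement condition (one can construct a new ray $v$ lying on a low-dimensional face of $\Sigma$ such that some cone of $\tilde\Sigma$ crosses a wall of $\Sigma_v$). The strategy, following Włodarczyk, is to pick $v\in\tilde\Sigma(1)\setminus\Sigma(1)$ lying in the relative interior of a cone $\sigma_0\in\Sigma$ of \emph{maximal} dimension among all cones of $\Sigma$ whose relative interior meets $\tilde\Sigma(1)\setminus\Sigma(1)$. For such a $v$, one must verify that every cone $\tilde\sigma\in\tilde\Sigma$ with $v\in\overline{\sigma}$ for some $\sigma\in\Sigma$, $\tilde\sigma\subseteq\sigma$, is in fact contained in $\tau_v$ for some $\tau\preceq\sigma$ with $v\notin\tau$. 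The maximality of $\dim(\sigma_0)$ prevents new rays from appearing in the interior of strictly larger cones, which, combined with the local structure of $\tilde\Sigma$ near $v$ (its star is a simplicial refinement of the star of $\sigma_0$ in $\Sigma$), forces $\tilde\sigma$ to lie on a single side of every wall of $\Sigma_v$ internal to $\sigma$. This verification is local—it is carried out cone-by-cone in a neighborhood of $v$—and is purely combinatorial, so Włodarczyk's argument for rational fans in $\mathbb{Q}^n$ transfers verbatim to simplicial fans in a real vector space, justifying the remark after the statement. The complete combinatorial check is the content of \cite[\S A]{Wlodarczyk}, and for a proposal of this length I would invoke that reference rather than reproduce the case analysis.
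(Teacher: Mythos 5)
Your reduction of the general case to the case where one fan refines the other is fine (the zigzag relation is symmetric and transitive, and a common simplicial refinement always exists), but the inductive step rests on a Key Lemma that is false, and the strategy cannot be repaired. If the Key Lemma held, your induction would show that every simplicial refinement $\tilde\Sigma$ of a simplicial fan $\Sigma$ is obtained from $\Sigma$ by stellar subdivisions \emph{alone}, i.e., strong factorization, which fails. A concrete counterexample in $\R^3$: let $\Sigma$ consist of $\sigma=\mathrm{cone}(e_1,e_2,e_3)$ and its faces, and let $\tilde\Sigma$ be the medial subdivision with new rays $q_1=e_2+e_3$, $q_2=e_1+e_3$, $q_3=e_1+e_2$ and maximal cones $\mathrm{cone}(e_1,q_2,q_3)$, $\mathrm{cone}(e_2,q_1,q_3)$, $\mathrm{cone}(e_3,q_1,q_2)$, $\mathrm{cone}(q_1,q_2,q_3)$. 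The stellar subdivision $\Sigma_{q_3}$ has the internal wall $\mathrm{cone}(q_3,e_3)$, consisting of points with equal $e_1$- and $e_2$-coordinates; the cone $\mathrm{cone}(q_1,q_2,q_3)\in\tilde\Sigma$ has $q_1$ and $q_2$ strictly on opposite sides of this wall, so $\tilde\Sigma$ does not refine $\Sigma_{q_3}$, and by symmetry it refines none of $\Sigma_{q_1},\Sigma_{q_2},\Sigma_{q_3}$. Since any fan obtained from $\Sigma$ by a nonempty sequence of stellar subdivisions necessarily refines the first subdivision performed, and since stellar subdivision never deletes rays, $\tilde\Sigma$ is not reachable from $\Sigma$ by stellar subdivisions at all; one genuinely needs inverse moves. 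Note also that your proposed selection rule (take $v$ in a cone of maximal dimension among those meeting the new rays) does not help here: all three new rays lie in facets of $\sigma$ of the same dimension.

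The argument you attribute to W{\l}odarczyk is therefore not his argument: his proof of Theorem A produces only the zigzag of the statement, via a considerably more elaborate construction, and does not proceed by peeling off stellar subdivisions one new ray at a time (that would settle the strong factorization problem, which his paper explicitly does not do). For what it is worth, the paper itself offers no proof of this lemma either; it simply cites \cite[Theorem A]{Wlodarczyk} and records the remark that the argument, being combinatorial, applies verbatim to simplicial fans over $\R$ rather than only to rational fans over $\Q$. If you want to present this lemma honestly, the correct move is to cite W{\l}odarczyk outright and confine your own contribution to the observation about irrational fans, rather than to sketch an induction whose key step is false.
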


To prove Theorem~\ref{thm:support}, we will to show that the essence of Property \ref{B}---everything except possibly the quasiprojectivity---is preserved by stellar subdivisions. Toward that end, it is useful to have a complete understanding of the two-dimensional stars of stellar subdivisions.

\begin{lemma}\label{lem:starsofstellars}
Let $\Sigma$ be a simplicial $d$-fan in $V$ with $d\geq 2$, let $v\in\pi^\circ$ for some $\pi\in\Sigma(k)$, and let $\tau\in\Sigma_v(d-2)$. One of the following must be true:
\begin{enumerate}
\item[(i)] $v\notin|\N_\tau\Sigma_v|$, implying that $(\Sigma_v)^\tau=\Sigma^\tau$;
\item[(ii)] $v\in|\N_\tau\Sigma_v|\setminus\tau$, implying that $(\Sigma_v)^\tau=(\Sigma^\tau)_{\overline v}$ where $\overline v$ is the image of $v$ in $V^\tau$;
\item[(iii)] $\tau=\tau'_v$ for some $\tau'\in\Sigma(d-3)$ with $|\tau'(1)\cap\pi(1)|=k-1$, implying that $(\Sigma_v)^\tau=\Sigma^{\tau'\cup\pi}$ where $\tau'\cup\pi\in\Sigma(d-2)$ is the cone with rays $\tau'(1)\cup\pi(1)$;
\item[(iv)] $\tau=\tau'_v$ for some $\tau'\in\Sigma(d-3)$ with $|\tau'(1)\cap\pi(1)|=k-2$, implying that $(\Sigma_v)^\tau$ has the following structure: 
\begin{itemize}
\item the rays of $(\Sigma_v)^\tau$ are of the form $\{\rho^-,\rho^+,\rho_1,\dots,\rho_m\}$ with $V_{\rho^-}=V_{\rho^+}$, 
\item the $2$-cones are of the form $\sigma_i^{\pm}$ where $\sigma_i^\pm(1)=\{\rho_i,\rho^{\pm}\}$ for $i=1,\dots,m$.
\end{itemize}
\item[(v)] $\tau=\tau'_v$ for some $\tau'\in\Sigma(d-3)$ with $|\tau'(1)\cap\pi(1)|=k-3$, implying that $(\Sigma_v)^\tau$ is the normal fan of a triangle within a $2$-dimensional subspace of $V^\tau$.
\end{enumerate}
\end{lemma}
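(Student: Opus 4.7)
The plan is to establish the classification by a careful case analysis on $\tau \in \Sigma_v(d-2)$. By the definition of the stellar subdivision, every such $\tau$ is either an original cone (so $\tau \in \Sigma(d-2)$ with $v \notin \tau$) or a newly-created cone (so $\tau = \tau'_v$ for some $\tau' \in \Sigma(d-3)$ with $v \notin \tau'$ that is a face of some cone of $\Sigma$ containing $v$). Since $v \in \pi^\circ$, the cones of $\Sigma$ containing $v$ are exactly those with $\pi$ as a face, so in the second case $\tau'$ and $\pi$ must lie in a common cone of $\Sigma$.

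For $\tau \in \Sigma(d-2)$, I would distinguish whether or not $v \in |\N_\tau \Sigma_v|$. If $v$ lies outside this neighborhood, the local picture at $\tau$ is untouched by the subdivision, yielding case (i). Otherwise, $v$ lies in some cone of $\Sigma$ containing $\tau$, and I would verify that passing to the quotient by $V_\tau$ identifies the induced refinement with the stellar subdivision of $\Sigma^\tau$ at $\overline v$, yielding case (ii). For the newly-created cones $\tau = \tau'_v$, the key observation is that the cone of $\Sigma$ containing both $\tau'$ and $\pi$ has dimension at most $d$; combined with $v \notin \tau'$ (which rules out $\pi \preceq \tau'$), this forces $k-3 \leq |\tau'(1) \cap \pi(1)| \leq k-1$, giving cases (iii), (iv), and (v).

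For case (iii), I would compute directly that $V_\tau = V_{\tau'} + \R v = V_{\tau' \cup \pi}$, so the quotient spaces coincide, and then establish a bijection between the maximal cones of $(\Sigma_v)^\tau$ and those of $\Sigma^{\tau' \cup \pi}$: each $d$-cone $\sigma \succeq \tau' \cup \pi$ has a unique facet $\kappa$ containing $\tau'$ but not $v$, obtained by deleting the unique ray in $\pi(1) \setminus \tau'(1)$, and the subdivided cone $\kappa_v$ supplies the corresponding maximal cone of $\N_\tau \Sigma_v$. For cases (iv) and (v), the crucial input is the positive relation $v = \sum_{\rho \in \pi(1)} a_\rho u_\rho$ with $a_\rho > 0$, which descends in $V^\tau$ (where $\overline v = 0$ and the rays of $\tau' \cap \pi$ vanish) to a positive linear dependence among the images of the rays in $\pi(1) \setminus \tau'(1)$. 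In case (iv) the two such rays yield the opposite rays $\rho^\pm$ with $V_{\rho^-} = V_{\rho^+}$, the remaining rays $\rho_i$ come from the unique extra ray of each $d$-cone $\sigma \succeq \tau' \cup \pi$, and each such $\sigma$ contributes exactly two facets (one for each ray in $\pi(1) \setminus \tau'(1)$ that can be removed), yielding the paired 2-cones $\sigma_i^\pm$. In case (v), $\tau' \cup \pi$ is itself a maximal cone, and its subdivision contributes exactly three 2-cones whose three rays satisfy a positive linear dependence, identifying $(\Sigma_v)^\tau$ with the normal fan of a triangle.

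The main obstacle will be the careful bookkeeping required to track cones through the stellar subdivision and the star operation simultaneously, particularly verifying that in case (iii) the map at the level of maximal cones (and rays) truly biject, and that the positive relation induced by $v \in \pi^\circ$ produces precisely the opposite-ray configuration in (iv) and the triangle-normal configuration in (v). Once the structural correspondences are in place, each claimed identification of stars follows by direct inspection of the enumerated cones on both sides.
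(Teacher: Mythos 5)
Your proposal is correct and follows essentially the same route as the paper: classify $\tau$ by whether $v\in\tau$ (giving $\tau=\tau'_v$ with $k-3\leq|\tau'(1)\cap\pi(1)|\leq k-1$) or $v\notin\tau$, then analyze the $d$- and $(d-1)$-cones in $N_\tau\Sigma_v$ case by case. The paper's proof is terser—it states the classification and says the identification of $(\Sigma_v)^\tau$ in each case "follows from a careful analysis" without writing it out—whereas you carry out that analysis explicitly, including the key use of the positive relation $v=\sum_{\rho\in\pi(1)}a_\rho u_\rho$ to obtain the opposite-ray and triangle configurations in cases (iv) and (v).
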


\begin{proof}
The classification into the five possible cases follows from the fact that the first two cases account for all situations where $v\notin\tau$, whereas the latter three cases account for all situations where $v\in\tau$. In the latter three cases, we must have $\tau=\tau'_v$ for some $\tau'\in\Sigma(d-3)$ with
\[
k-3\leq |\tau'(1)\cap\pi(1)|\leq k-1.
\]
The lower bound is because $\tau'$ and $\pi$ are both contained within a common $d$-cone and $\dim(\tau')=d-3$, and the upper bound is because $v$, and thus $\pi$, is not contained within $\tau'$.

The description of $(\Sigma_v)^\tau$ in each case follows from a careful analysis of the cones of dimension $d$ and $d-1$ in the neighborhood $N_\tau\Sigma_v$. In the rational setting, we note that stellar subdivisions correspond to blowups of toric varieties, and an algebraic geometer may recognize each of the separate cases in the lemma as a different type of torus-invariant surface inside a blowup of a toric variety.
\end{proof}

We conclude our preparation for the proof of Theorem~\ref{thm:support} with three computational results about eigenvalues of quadratic volume polynomials associated to the different types of star fans that appeared in the previous lemma.

\begin{lemma}\label{lem:2dedge}
Let $\bX$ be a tropical $2$-fan variety and let $(\Sigma,\omega)$ and $(\Sigma',\omega')$ be two representatives of $\bX$ for which $\Sigma'$ is a stellar subdivision of $\Sigma$. For any markings $u$ and $u'$, the quadratic forms $\Vol_{\Sigma,u,\omega_u}$ and $\Vol_{\Sigma',u',\omega_{u'}}$ have the same number of positive eigenvalues.
\end{lemma}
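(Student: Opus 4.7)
The plan is to realize the form $\Vol_{\bSigma'(u')}$ as an orthogonal direct sum of $\Vol_{\bSigma(u)}$ with a one-dimensional negative-definite quadratic form, from which Sylvester's law of inertia gives the conclusion immediately. Since rescaling any marking $u'_\rho \mapsto \lambda u'_\rho$ with $\lambda>0$ only substitutes $z_\rho\mapsto\lambda^{-1}z_\rho$ in the volume polynomial (see \eqref{eq:starscale}), an operation that preserves the number of positive eigenvalues, we may assume $u$ and $u'$ agree on every ray common to $\Sigma$ and $\Sigma'$.

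In dimension two, a stellar subdivision $\Sigma'=\Sigma_v$ either leaves $\Sigma$ combinatorially unchanged (when $v$ lies in the relative interior of an existing ray, in which case the two volume polynomials differ only by positive rescaling of variables) or introduces exactly one new ray $\rho^*$ in the relative interior of some two-cone $\pi\in\Sigma(2)$, splitting $\pi$ into $\pi_1,\pi_2$ with $\pi_i(1)=\{\rho_i,\rho^*\}$, where $\rho_1,\rho_2$ are the rays of $\pi$. We focus on the nontrivial case. Take $u'_{\rho^*}=v$ and expand $v=\alpha_1 u_{\rho_1}+\alpha_2 u_{\rho_2}$ with $\alpha_1,\alpha_2>0$. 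Restriction of piecewise linear functions gives an injection $D(\Sigma)\hookrightarrow D(\Sigma')$ sending $D_{u,\rho_i}\mapsto D_{u',\rho_i}+\alpha_i D_{u',\rho^*}$ for $i=1,2$ and fixing all other generators; counting dimensions yields the direct sum decomposition $D(\Sigma')=D(\Sigma)\oplus\R D_{u',\rho^*}$.

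Let $B$ denote the mixed-degree bilinear form on $D(\Sigma')$, so that $\Vol_{\bSigma'(u')}$ is the associated quadratic form; by Proposition~\ref{prop:mixeddegrees} the restriction of $B$ to $D(\Sigma)$ coincides with the mixed-degree form associated to $\bSigma(u)$. I claim the above decomposition is $B$-orthogonal. For any ray $\eta\neq\rho_1,\rho_2,\rho^*$, the rays $\eta$ and $\rho^*$ share no two-cone of $\Sigma'$, so Proposition~\ref{prop:2dvolume} gives $B(D_{u',\eta},D_{u',\rho^*})=0$. For $i=1,2$, balancing at $\rho^*$ in $\bSigma'(u')$ reads $\omega'(\pi_1)u_{\rho_1}+\omega'(\pi_2)u_{\rho_2}=a'_{\rho^*}v$, which forces $\omega'(\pi_i)=\alpha_i a'_{\rho^*}$, and therefore
\[
B(D_{u,\rho_i},D_{u',\rho^*})=B(D_{u',\rho_i},D_{u',\rho^*})+\alpha_i B(D_{u',\rho^*},D_{u',\rho^*})=\omega'(\pi_i)+\alpha_i\cdot(-a'_{\rho^*})=0.
\]

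Consequently $\Vol_{\bSigma'(u')}$ decomposes orthogonally as $\Vol_{\bSigma(u)}$ on the $D(\Sigma)$-factor plus the one-dimensional form $-a'_{\rho^*}z_{\rho^*}^2$ on the $\R D_{u',\rho^*}$-factor. Solving $\deg_{\bSigma(u)}(D_{u,\rho_1}D_{u,\rho_2})=\omega(\pi)$ inside $D(\Sigma')$ using Proposition~\ref{prop:mixeddegrees} pins down $a'_{\rho^*}=\omega(\pi)/(\alpha_1\alpha_2)>0$, so this one-dimensional summand is negative definite. Sylvester's law of inertia then implies that $\Vol_{\bSigma'(u')}$ and $\Vol_{\bSigma(u)}$ have the same number of positive eigenvalues. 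The main technical obstacle is verifying that the mixed terms $B(D_{u,\rho_i},D_{u',\rho^*})$ vanish; this cancellation depends on reading the balancing relation at $\rho^*$ together with Proposition~\ref{prop:mixeddegrees}, and it is precisely the combination of those two inputs that forces the identity $\omega'(\pi_i)=\alpha_i a'_{\rho^*}$.
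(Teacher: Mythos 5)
Your proof is correct, and while the endpoint is the same as the paper's --- namely, that $\Vol_{\bSigma'(u')}$ decomposes as $\Vol_{\bSigma(u)}$ plus a rank-one negative-definite form on the new variable --- you arrive there by a more structured route. The paper simply asserts, after ``a tedious computation'' with Proposition~\ref{prop:2dvolume}, the explicit identity
\[
\Vol_{\Sigma',u',\omega'}=\Vol_{\Sigma,u,\omega}-\frac{\omega(\pi)}{\alpha_1\alpha_2}\big(z_{\rho^*}-\alpha_1 z_{\rho_1}-\alpha_2 z_{\rho_2}\big)^2,
\]
which is exactly the orthogonal direct-sum decomposition you describe (the linear form $z_{\rho^*}-\alpha_1 z_{\rho_1}-\alpha_2 z_{\rho_2}$ being the coordinate dual to $\R D_{u',\rho^*}$ inside $D(\Sigma')=D(\Sigma)\oplus\R D_{u',\rho^*}$). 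What your version buys is a genuine explanation of \emph{why} the cross-terms vanish: the balancing condition at $\rho^*$ forces $\omega'(\pi_i)=\alpha_i a'_{\rho^*}$, which is precisely what makes the image of $D(\Sigma)$ under the natural refinement map $B$-orthogonal to $D_{u',\rho^*}$. Invoking Proposition~\ref{prop:mixeddegrees} to identify the restriction of $B$ to $D(\Sigma)$ with $\deg_{\bSigma(u)}$, rather than matching coefficients term by term, is cleaner and highlights that the argument is really about refinement-invariance of mixed degrees. The one small thing you could tighten is the reduction ``we may assume $u$ and $u'$ agree'': the justification via a diagonal change of variables is right, but your citation of \eqref{eq:starscale} (which concerns star fans) is a bit off-target; the relevant fact is the equivalence relation on marked Minkowski weights and the induced diagonal substitution $z_\rho\mapsto\lambda_\rho z_\rho$ in the volume polynomial, which indeed preserves inertia by Sylvester.
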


\begin{proof}
Suppose $\Sigma'=\Sigma_v$ for some $v\in|\Sigma|$. If $v\in\rho^\circ$ for some $\rho\in\Sigma(1)$, then $\Sigma=\Sigma'$, and the associated volume polynomials are equal. Suppose, then, that $v\in\sigma^\circ$ with $\sigma\in\Sigma(2)$. Denote the rays of $\sigma$ by $\rho_1$ and $\rho_2$ with markings $u_1$ and $u_2$, and let $a_1,a_2\in\R_{>0}$ be such that $a_1u_1+a_2u_2=v$. Let $\sigma_1,\sigma_2\in\Sigma'(2)$ denote the $2$-cones in $\Sigma'$ that are contained in $\sigma$, and let $\eta$ denote the ray generated by $v$. Since the conclusion is independent of the markings $u'$, we choose $u'$ to agree with $u$ on all rays of $\Sigma$ and we set $u_\eta'=v$.  A tedious computation using Proposition~\ref{prop:2dvolume} shows that
\[
\Vol_{\Sigma',u',\omega'}=\Vol_{\Sigma,u,\omega}-\frac{\omega(\sigma)}{a_1a_2}(z_\eta-a_1z_{\rho_1}-a_2z_{\rho_2})^2.
\]
It follows that $\Vol_{\Sigma',u',\omega'}$ has the same number of positive eigenvalues as $\Vol_{\Sigma,u,\omega}$ and one additional negative eigenvalue.
\end{proof}

\begin{lemma}\label{lem:spindle}
Let $\Sigma$ be simplicial $2$-fan with the structure described in Lemma~\ref{lem:starsofstellars}(iv). If $\omega\in MW_2(\Sigma,u)$, then $\Vol_{\Sigma,u,\omega}$ has exactly one positive and one negative eigenvalue.
\end{lemma}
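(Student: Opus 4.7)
The structural rigidity of this fan should force the volume polynomial to factor as a product of two linearly independent linear forms, after which the identity $\ell_1\ell_2=\tfrac{1}{4}(\ell_1+\ell_2)^2-\tfrac{1}{4}(\ell_1-\ell_2)^2$ immediately yields the desired signature.

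First, since $V_{\rho^-}=V_{\rho^+}$ and the two rays are distinct, there is $\lambda>0$ with $u_{\rho^-}=-\lambda u_{\rho^+}$. Because each $\rho_i$ lies in exactly the two $2$-cones $\sigma_i^\pm$, the defining identity in Proposition~\ref{prop:2dvolume} for $a_{\rho_i}$ reduces to
\[
a_{\rho_i} u_{\rho_i} = \omega(\sigma_i^+)u_{\rho^+} + \omega(\sigma_i^-)u_{\rho^-} = \bigl(\omega(\sigma_i^+)-\lambda\omega(\sigma_i^-)\bigr)u_{\rho^+},
\]
and since $u_{\rho_i}\notin V_{\rho^+}$ this forces both $a_{\rho_i}=0$ and $\omega(\sigma_i^-)=\omega(\sigma_i^+)/\lambda$. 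Applying the same proposition at $\rho^+$ and $\rho^-$, the resulting equations for $a_{\rho^\pm}u_{\rho^\pm}$ differ only by the scalar $1/\lambda$, which combined with $u_{\rho^-}=-\lambda u_{\rho^+}$ yields $a_{\rho^-}=-a_{\rho^+}/\lambda^2$.

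Second, setting $\omega_i:=\omega(\sigma_i^+)$ and $a:=a_{\rho^+}$ and substituting into Proposition~\ref{prop:2dvolume}, the volume polynomial collapses to
\[
\Vol_{\Sigma,u,\omega} = \Bigl(z_{\rho^+} + \tfrac{1}{\lambda}z_{\rho^-}\Bigr)\Bigl(2\sum_i \omega_i z_{\rho_i} - a\bigl(z_{\rho^+} - \tfrac{1}{\lambda}z_{\rho^-}\bigr)\Bigr) =: \ell_1\ell_2,
\]
where the $-a z_{\rho^+}^2+(a/\lambda^2)z_{\rho^-}^2$ piece is responsible for the $a$-term in $\ell_2$ via the factorization $-a(z_{\rho^+}-\tfrac{1}{\lambda}z_{\rho^-})(z_{\rho^+}+\tfrac{1}{\lambda}z_{\rho^-})$. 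Since the $\omega_i$ are positive, $\ell_2$ has nontrivial coefficients in the variables $z_{\rho_i}$ that do not appear in $\ell_1$, so $\ell_1$ and $\ell_2$ are linearly independent.

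The difference-of-squares identity then exhibits $\Vol_{\Sigma,u,\omega}$ as a rank-two quadratic form with signature $(1,1,m+1)$, which is exactly one positive and one negative eigenvalue, as claimed. The only real bookkeeping hurdle is the first step---deducing the three balancing-forced relations among the $\omega(\sigma_i^\pm)$ and $a_{\rho^\pm}$---after which the factorization is a short algebraic manipulation and the signature is automatic.
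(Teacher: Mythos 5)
Your proof is correct and follows essentially the same approach as the paper: deduce from the balancing conditions that the $z_{\rho_i}^2$ terms vanish and that the weights and the $a_{\rho^{\pm}}$ satisfy rigid relations, factor $\Vol_{\Sigma,u,\omega}$ as a product $\ell_1\ell_2$ of two linearly independent linear forms, and read off the signature via $\ell_1\ell_2=\tfrac14(\ell_1+\ell_2)^2-\tfrac14(\ell_1-\ell_2)^2$; the paper merely normalizes the marking so that $u_{\rho^+}+u_{\rho^-}=0$ (legitimate because the signature is marking-independent), which sets your $\lambda=1$ and streamlines the bookkeeping. One small slip: the signature should be $(1,1,m)$, not $(1,1,m+1)$, since there are $m+2$ variables and the form has rank two.
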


\begin{proof}
Using notation from Lemma~\ref{lem:starsofstellars}(iv) for the rays and cones of $\Sigma$ and choosing a marking such that $u_{\rho^+}+u_{\rho^-}=0$, the balancing condition implies that $\omega(\sigma_i^+)=\omega(\sigma_i^-)$; denote this real number by $w_i$. Since the weights on $\sigma_i^\pm$ are the same, it also follows that $a_{\rho^+}=-a_{\rho^-}$; set $a=a_{\rho^-}$. Using Proposition~\ref{prop:2dvolume}, we compute
\begin{align*}
&\Vol_{\Sigma,u,\omega}=az_{\rho^+}^2-az_{\rho^-}^2+2\sum_{i=1}^mw_iz_{\rho_i}z_{\rho^+}+2\sum_{i=1}^mw_iz_{\rho_i}z_{\rho^-}\\
&=\frac{1}{4}\Big(z_{\rho^+}+z_{\rho^-}+az_{\rho^+}-az_{\rho^-}+2\sum_{i=1}^mw_iz_{\rho_i}\Big)^2-\frac{1}{4}\Big(z_{\rho^+}+z_{\rho^-}-az_{\rho^+}+az_{\rho^-}-2\sum_{i=1}^mw_iz_{\rho_i}\Big)^2,
\end{align*}
implying that $\Vol_{\Sigma,u,\omega}$ has exactly one positive and one negative eigenvalue.
\end{proof}

\begin{lemma}\label{lem:complete}
Let $\Sigma$ be the normal fan of a triangle. If $\omega\in MW_2(\Sigma,u)$ is positive, then $\Hess(\Vol_{\Sigma,u,\omega})$ has one positive eigenvalue and no negative eigenvalues.
\end{lemma}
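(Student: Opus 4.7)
The plan is to exploit the fact that, for a $2$-fan in a two-dimensional ambient vector space with exactly three rays, the volume polynomial is extremely constrained. Since $\Sigma$ is the normal fan of a triangle in a $2$-dimensional vector space $V$, we have $|\Sigma(1)|=3$ and the three markings $u_{\rho_1},u_{\rho_2},u_{\rho_3}$ span $V$. Consequently, the natural map $z_u:V^\vee\to\R^{\Sigma(1)}=\R^3$ is injective, and its image is a two-dimensional subspace of $\R^3$.

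The volume polynomial $\Vol_{\Sigma,u,\omega}$ is a homogeneous quadratic polynomial in three variables that vanishes on the image of $z_u$ and is invariant under translation by elements of that image (both of these being general properties of volume polynomials noted in Section~\ref{sec:lorentzianfans}). Its Hessian is therefore a constant symmetric $3\times 3$ matrix that annihilates the two-dimensional image of $z_u$, so $\Hess(\Vol_{\Sigma,u,\omega})$ has at least two zero eigenvalues. It follows that $\Hess(\Vol_{\Sigma,u,\omega})$ has at most one nonzero eigenvalue, so the only remaining task is to show that this eigenvalue is strictly positive (and, in particular, nonzero).

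To that end, I observe that $\Sigma$ is complete and quasiprojective (the normal fan of the triangle is witnessed as quasiprojective by the support function of the triangle itself), so $K(\Sigma)\neq\emptyset$. Picking any strictly convex divisor $D\in K(\Sigma)$ together with a lift $z\in\R^{\Sigma(1)}$ satisfying $D_u(z)=D$, the positivity property \eqref{eq:positivity} gives
\[
\Vol_{\Sigma,u,\omega}(z)=\deg_{\Sigma,\omega}(D^2)>0,
\]
so the quadratic form attains a strictly positive value. Combined with the fact that $\Hess(\Vol_{\Sigma,u,\omega})$ has at most one nonzero eigenvalue, this forces the unique nonzero eigenvalue to be positive, yielding the desired signature $(1,0,2)$. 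The argument is essentially bookkeeping: all of the real content has already been packaged into Proposition~\ref{prop:2dvolume} (the explicit form of the volume polynomial) together with the positivity assertion \eqref{eq:positivity}, so the main obstacle is simply to correctly identify that the kernel of the Hessian has exactly the expected dimension.
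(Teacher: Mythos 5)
Your proof is correct, but it takes a genuinely different and more conceptual route than the paper. The paper's proof chooses a marking with $u_{\rho_1}+u_{\rho_2}+u_{\rho_3}=0$, invokes Proposition~\ref{prop:2dvolume} together with the balancing condition to compute $\Vol_{\Sigma,u,\omega}=w(z_{\rho_1}+z_{\rho_2}+z_{\rho_3})^2$ explicitly, and reads off the eigenvalues. You instead avoid any explicit computation: you observe that the three markings span the two-dimensional space $V$, so $z_u:V^\vee\to\R^3$ is injective with two-dimensional image, that the quadratic $\Vol_{\Sigma,u,\omega}$ is invariant under translation by this image (hence its constant Hessian annihilates a two-dimensional subspace, leaving at most one nonzero eigenvalue), and that quasiprojectivity plus \eqref{eq:positivity} forces the quadratic form to take a strictly positive value, so the one remaining eigenvalue must be positive. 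Each step is valid, and the signature $(1,0,2)$ matches the paper's computation. What your approach buys is that it would work verbatim for any complete $2$-fan in a two-dimensional space without knowing the number of rays, since it never uses $|\Sigma(1)|=3$ beyond the dimension count; what the paper's explicit computation buys is an exact closed form for the volume polynomial, which is in the same spirit as the companion Lemmas~\ref{lem:2dedge} and~\ref{lem:spindle}. One small correction: your closing remark that the content is ``packaged into Proposition~\ref{prop:2dvolume}'' does not accurately describe your own argument, since you never invoke that proposition---the ingredients you actually use are translation-invariance of the volume polynomial along the image of $V^\vee$ (noted in the paper in the proof of Theorem~\ref{thm:characterization} rather than in Section~\ref{sec:lorentzianfans}) and the positivity assertion \eqref{eq:positivity}.
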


\begin{proof}
Denote the rays of $\Sigma$ by $\rho_1,\rho_2,\rho_3$ and denote the $2$-cone with rays $\rho_i,\rho_j$ by $\sigma_{ij}$. Since the conclusion is independent of our choice of $u$, we may choose a marking such that $u_{\rho_1}+u_{\rho_2}+u_{\rho_3}=0$. It then follows from the balancing condition that there is a positive number $w$ such that $\omega(\sigma_{ij})=-a_{\rho_k}=w$ for all $i,j,k$. Proposition~\ref{prop:2dvolume} then implies that
\[
\Vol_{\Sigma,u,\omega}=w(z_{\rho_1}+z_{\rho_2}+z_{\rho_3})^2,
\]
and the result follows.
\end{proof}

We now prove that the Lorentzian property descends to tropical fan varieties.

\begin{proof}[Proof of Theorem~\ref{thm:support}]

Assume that $\bX$ is a Lorentzian $d$-fan variety and let $(\Sigma,u,\omega)$ be a representative of $\bX$ for which $\Sigma$ is quasiprojective with $|\Sigma|=\cX$. We aim to show that $(\Sigma,u,\omega)$ is Lorentzian, and by Theorem~\ref{thm:characterization}, it suffices to prove that
\begin{enumerate}
\item $\Sigma$ is unpinched, and
\item $\Vol_{\Sigma^\tau,\omega^\tau,u^\tau}$ has exactly one positive eigenvalue for every $\tau\in\Sigma(d-2)$.
\end{enumerate}

By definition of Lorentzian fan varieties, we know that $\bX$ admits a Lorentzian representative $(\Sigma',u',\omega')$ with $|\Sigma'|=\cX=|\Sigma|$. By Theorem~\ref{thm:characterization}, we know that $\Sigma'$ is unpinched, and it then follows from Lemma~\ref{lem:pinchedsupport} that $\Sigma$ is unpinched, proving (1).

To verify (2), note that Lemma~\ref{lem:factoring} allows us to find a sequence of stellar subdivisions interpolating between $\Sigma$ and $\Sigma'$, and since we know that (2) holds for $(\Sigma',u',\omega')$, this reduces the proof of (2) to studying two special cases:
\begin{enumerate}
\item[(2a)] $\Sigma$ is a stellar subdivision of $\Sigma'$, or
\item[(2b)] $\Sigma'$ is a stellar subdivision of $\Sigma$.
\end{enumerate}

Assume that we are in the setting of (2a) and let $\tau\in\Sigma(d-2)$. By Lemma~\ref{lem:starsofstellars}, there are five possibilities for $\Sigma^\tau$. In each of the cases (i) and (iii), we see that $\Sigma^\tau$ is equal to $(\Sigma')^\pi$ for an appropriate cone $\pi\in\Sigma'(d-2)$, so (2) follows from our assumptions on $(\Sigma',u',\omega')$. In case (ii), $\Sigma^\tau$ is a stellar subdivision of a two-dimensional star fan of $\Sigma'$, and the conclusion follows from Lemma~\ref{lem:2dedge} and our assumption on $(\Sigma',u',\omega')$. In cases (iv) and (v), the conclusion follows from Lemma~\ref{lem:spindle} and Lemma~\ref{lem:complete}, respectively.

Now assume that we are in the setting of (2b). Let $v\in|\Sigma|$ be the point at which we are performing the stellar subdivision, and let $\tau\in\Sigma(d-2)$. There are three cases to consider: (i) $v\notin|\N_\tau\Sigma|$, (ii) $v\in|\N_\tau\Sigma|\setminus\tau$, and (iii) $v\in\tau$---these three cases essentially correspond to the first three cases in Lemma~\ref{lem:starsofstellars}, the other two cases being irrelevant in this direction of the argument. In cases (i) and (iii), the star $\Sigma^\tau$ is equal to $(\Sigma')^\pi$ for an appropriate cone $\pi\in\Sigma'(d-2)$, and the conclusion follows from our assumption on $(\Sigma',u',\omega')$. In case (ii), there is a cone $\pi\in\Sigma'(d-2)$ for which $(\Sigma')^\pi$ is a stellar subdivision of $\Sigma^\tau$, so the conclusion follows from Lemma~\ref{lem:2dedge} and our assumption on $(\Sigma',u',\omega')$.
\end{proof}

\subsection{Alexandrov--Fenchel inequalities for Lorentzian fan varieties}

In this subsection, we prove the following analogue of Proposition~\ref{prop:tropicalfanAF} for Lorentzian fan varieties.

\begin{theorem}\label{thm:fansetAF}
Let $\bX$ be a Lorentzian $d$-fan variety. For any $D_1,\dots,D_d\in \overline K(\cX)$, we have
\[
\deg_{\bX}(D_1 D_2 D_3\cdots D_d)^2\geq \deg_{\bX}(D_1^2D_3\cdots D_d)\cdot \deg_{\bX}(D_2^2D_3\cdots D_d).
\]
Furthermore, for $D_1,D_2\in\overline K(\cX)$, the sequence
\[
\big(\deg_{\bX}(D_1^kD_2^{d-k})\big)_{k=0}^d
\]
 is log-concave and unimodal.
\end{theorem}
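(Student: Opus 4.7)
The plan is to reduce the statement on the fan variety $\bX$ to the analogous statement for a Lorentzian fan representative, which is furnished by Proposition~\ref{prop:tropicalfanAF}. Given $D_1,\dots,D_d\in\overline{K}(\cX)$, my first goal is to produce a single quasiprojective, simplicial fan $\Sigma$ with $|\Sigma|=\cX$ that is simultaneously $D_i$-admissible for every $i$. Once such a $\Sigma$ is in hand, Theorem~\ref{thm:support} guarantees that the marked fan $(\Sigma,u,\omega)$---for any marking $u$, with $\omega\in MW_d(\Sigma,u)$ the unique Minkowski weight representing $\Omega$---is Lorentzian, and Proposition~\ref{prop:convexeverywhere} ensures that each restriction $D_{i,\Sigma}$ lies in $\overline{K}(\Sigma)$. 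Proposition~\ref{prop:mixeddegrees} identifies the mixed degrees on $\bX$ with the corresponding mixed degrees on $(\Sigma,\omega)$, so the AF inequality on $\bX$ follows immediately from the AF inequality on $(\Sigma,\omega)$ supplied by Proposition~\ref{prop:tropicalfanAF}. The log-concavity and unimodality claims for the sequence $\bigl(\deg_\bX(D_1^k D_2^{d-k})\bigr)_{k=0}^d$ are then formal consequences of the AF inequality, exactly as at the end of the proof of Proposition~\ref{prop:tropicalfanAF}.

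To construct $\Sigma$, I would begin with a quasiprojective, simplicial, Lorentzian representative $(\Sigma_0,u_0,\omega_0)$ of $\bX$, which exists by the definition of a Lorentzian fan variety, together with simplicial fans $\Sigma_1,\dots,\Sigma_d$ of support $\cX$ on which the respective $D_j$ is piecewise linear. Any simplicial common refinement of $\Sigma_0,\Sigma_1,\dots,\Sigma_d$ has support $\cX$ and is $D_i$-admissible for every $i$; what requires argument is that this refinement can be arranged to remain quasiprojective. Using Lemma~\ref{lem:factoring}, I would realize the refinement as a finite sequence of stellar subdivisions starting from $\Sigma_0$ and inductively maintain quasiprojectivity along the sequence: if $D\in K(\Sigma')$ witnesses the quasiprojectivity of a simplicial fan $\Sigma'$ and $\Sigma'_v$ is the stellar subdivision of $\Sigma'$ at some $v\in|\Sigma'|$, then the pullback of $D$ to $\Sigma'_v$ remains convex but is typically only weakly convex across the newly introduced cones, and a sufficiently small perturbation by the piecewise linear function on $\Sigma'_v$ supported on the ray through $v$ restores strict convexity.

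The main obstacle in this outline is precisely the inductive preservation of quasiprojectivity under stellar subdivisions just described; this is a tropical analogue of the toric Chow lemma, standard in spirit but requiring careful verification that the claimed perturbation genuinely produces a strictly convex piecewise linear function across every cone of $\Sigma'_v$, including those inherited unchanged from $\Sigma'$. Once that construction is in place, every remaining step of the proof is a direct invocation of machinery already assembled in the paper: Theorem~\ref{thm:support} transports the Lorentzian property from the given representative of $\bX$ to the newly constructed $\Sigma$, and Proposition~\ref{prop:tropicalfanAF} then delivers both the Alexandrov--Fenchel inequality and the log-concavity and unimodality of the sequence of mixed degrees of any pair of convex divisors.
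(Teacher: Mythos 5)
Your overall strategy is the same as the paper's: find a single quasiprojective, simplicial, $D_i$-admissible fan $\Sigma$ with support $\cX$, transport the Lorentzian property to it via Theorem~\ref{thm:support}, and then apply Propositions~\ref{prop:mixeddegrees}, \ref{prop:convexeverywhere}, and \ref{prop:tropicalfanAF}. You also correctly identify that the crux is constructing such a $\Sigma$, and your description of the perturbation argument showing that a stellar subdivision of a quasiprojective fan remains quasiprojective is exactly Lemma~\ref{lem:final1}.

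There is, however, a genuine gap in how you propose to produce the one-directional chain of stellar subdivisions. You cite Lemma~\ref{lem:factoring} (W{\l}odarczyk's weak factorization) to ``realize the refinement as a finite sequence of stellar subdivisions starting from $\Sigma_0$,'' but that lemma does not provide a one-directional sequence: it only gives a \emph{zigzag} in which each step is either a stellar subdivision or an \emph{inverse} stellar subdivision. Since the inverse moves (blowdowns) do not preserve quasiprojectivity in general, your inductive argument breaks at the first inverse step, and you cannot conclude that the resulting common refinement is quasiprojective. Moreover, it is simply not true that an arbitrary simplicial refinement of $\Sigma_0$ is reachable from $\Sigma_0$ by stellar subdivisions alone, so the claim is not repairable merely by reinterpreting W{\l}odarczyk. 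What the argument actually needs is the separate, more elementary Lemma~\ref{lem:final2}: for any two simplicial fans with the same support, one can construct a common refinement that \emph{is} obtainable from one of them by a one-directional sequence of stellar subdivisions, by successively subdividing along the finitely many hyperplanes cutting out the cones of the other. Replacing your invocation of Lemma~\ref{lem:factoring} with Lemma~\ref{lem:final2} (applied to $\Sigma_2=\Sigma_0$ and a common refinement $\Sigma_1$ of the $D_i$-admissible fans) closes the gap, and then the rest of your argument goes through exactly as you describe.
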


In order to prove Theorem~\ref{thm:fansetAF}, we would like to choose a particular representative $(\Sigma,u,\omega)$ and apply Proposition~\ref{prop:tropicalfanAF}---the only subtle point that could possibly get in the way is whether we can choose such a representative that is quasiprojective. We now present two lemmas that ensure we can always choose a quasiprojective representative.

\begin{lemma}\label{lem:final1}
If $\Sigma$ is a simplicial quasiprojective fan, then any stellar subdivision of $\Sigma$ is also quasiprojective.
\end{lemma}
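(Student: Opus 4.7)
The plan is to construct a strictly convex piecewise linear function on $\Sigma_v$ by perturbing one on $\Sigma$. If $v$ lies on an existing ray of $\Sigma$ then $\Sigma_v=\Sigma$ and the conclusion is immediate, so assume that $v$ lies in the relative interior of a cone $\sigma\in\Sigma$ with $\dim\sigma\geq 2$. Choose a representative $\phi\in PL(\Sigma)$ of some class in $K(\Sigma)$, which exists by the quasiprojectivity hypothesis. Since $\Sigma_v$ refines $\Sigma$, the function $\phi$ is also piecewise linear on $\Sigma_v$; it remains convex there, but it fails to be strictly convex at the new cones $\tau_v$ because $\phi$ is linear across the cone $\sigma$ that gets subdivided.

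I would then introduce the perturbation $g\in PL(\Sigma_v)$ defined on rays by $g(u_\rho)=0$ for every $\rho\in\Sigma(1)$ and $g(v)=-1$ at the new ray generated by $v$, and claim that $\phi+\epsilon g$ is strictly convex on $\Sigma_v$ for all sufficiently small $\epsilon>0$. Since $\Sigma_v$ has only finitely many cones, it suffices to produce, for each cone $\tau'\in\Sigma_v$, a positive upper bound on $\epsilon$ below which strict convexity of $\phi+\epsilon g$ at $\tau'$ holds, and then take the minimum of these finitely many bounds.

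The verification splits into two cases. For a cone $\tau'\in\Sigma\cap\Sigma_v$ left unchanged by the subdivision, $g$ vanishes on the rays of $\tau'$, so the linear function $\ell$ witnessing strict convexity of $\phi$ at $\tau'$ in $\Sigma$ can be reused to witness strict convexity of $\phi+\epsilon g$ at $\tau'$ in $\Sigma_v$, provided $\epsilon$ is small enough that the strict positivity of $\phi-\ell$ at the point $v$ (in case $v\in|N_{\tau'}\Sigma|\setminus\tau'$) absorbs the $\epsilon$-perturbation. For a new cone $\tau'=\tau_v$, the linear piece of $\phi+\epsilon g$ on $\tau_v$ extends to a linear function on $V$ of the form $\phi|_{\sigma'}+\epsilon\ell_g$, where $\sigma'\in\Sigma$ is a cone containing $\tau_v$ and $\ell_g$ is any linear extension of $g|_{\tau_v}$; one must then choose $\ell_g$ so that the resulting strict inequality $(\phi+\epsilon g)(u_\rho)>\phi|_{\sigma'}(u_\rho)+\epsilon\ell_g(u_\rho)$ holds for every ray $\rho\in N_{\tau_v}\Sigma_v(1)\setminus\tau_v(1)$.

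I expect the main obstacle to be the analysis at the new cones $\tau_v$, which essentially reduces to showing that $g$ itself is strictly convex at each such cone. That in turn amounts to producing a linear extension $\ell_g$ of $g|_{\tau_v}$ satisfying $\ell_g(u_\rho)<0$ for every outlying ray $\rho$. When the marking $u_\rho$ lies outside $V_{\tau_v}$ one may freely adjust $\ell_g$; the delicate case is when $u_\rho\in V_{\tau_v}=V_\tau+\mathbb{R}v$, in which case writing $u_\rho=w+\lambda v$ with $w\in V_\tau$ forces $\ell_g(u_\rho)=-\lambda$, so one needs $\lambda>0$. This positivity should follow from the facts that $\Sigma$ is simplicial, that $v$ lies in the strict interior of $\sigma$, and that $\rho$ is a ray of a cone of $\Sigma$ containing both $\tau$ and $v$, which together force the $v$-component of $u_\rho$ in the decomposition along $V_\tau\oplus\mathbb{R}v$ to be strictly positive.
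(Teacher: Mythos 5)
Your proposal is correct and takes essentially the same approach as the paper. The paper defines $D_v$ (which is $-g$ in your notation) taking value $1$ at $v$ and $0$ on the old rays, and observes that $D-\epsilon D_v\in K(\Sigma_v)$ for small $\epsilon>0$, delegating the verification to \cite[Prop.~11.1.6]{CoxLittleSchenck} and \cite[Prop.~5.4]{ArdilaDenhamHuh}. You fill in more of the verification; it is morally sound, though a couple of phrases are loose: when $u_\rho\notin V_{\tau_v}$ the values $\ell_g(u_\rho)$ for the rays $\rho\in\sigma(1)\setminus\tau(1)$ (with $\sigma$ the unique cone having $v\in\sigma^\circ$) are not \emph{freely} adjustable but are tied by the single relation $\sum_{\rho}c_\rho\,\ell_g(u_\rho)=-1$ coming from $v=\sum_{\eta\in\sigma(1)}c_\eta u_\eta$ with all $c_\eta>0$, which is nonetheless compatible with making every $\ell_g(u_\rho)<0$; and one does not need $\ell_g(u_\rho)<0$ for outlying rays outside $\sigma$, since for those the positive contribution of $\phi-\phi|_\sigma$ (strict convexity of $\phi$ at $\sigma$) absorbs the $\epsilon$-perturbation. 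The key positivity you identify at the end — that $v\in\sigma^\circ$ forces the $v$-coefficient to be positive — is exactly the crux of the argument.
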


\begin{proof}
In the rational setting, this is a well-known result of toric geometry \cite[Proposition~11.1.6]{CoxLittleSchenck}, and the proof generalizes to the general setting. To summarize the main point: given $v\in|\Sigma|\setminus |\Sigma[1]|$, let $D_v$ be the divisor on $\Sigma_v$ represented by the piecewise linear function that takes value $1$ at $v$ and value $0$ at $u_\rho$ for all $\rho\in\Sigma(1)$. One checks that $D-\epsilon D_v\in K(\Sigma_v)$ for any $D\in K(\Sigma)$ and sufficiently small $\epsilon>0$ \cite[Proposition~5.4]{ArdilaDenhamHuh}.
\end{proof}

\begin{lemma}\label{lem:final2}
If $\Sigma$ and $\Sigma'$ are simplicial fans in $V$ with the same support, then they admit a common refinement that can be obtained from $\Sigma$ by a sequence of stellar subdivisions.
\end{lemma}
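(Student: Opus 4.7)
The plan is to construct the common refinement iteratively via stellar subdivisions, measuring progress by a complexity function that strictly decreases at each step. Define the \emph{defect} $\delta(\Sigma,\Sigma')$ to be the nonincreasing tuple of dimensions of cones $\sigma\in\Sigma$ that are \emph{not} contained in any cone of $\Sigma'$, compared lexicographically. When the defect is empty, $\Sigma$ already refines $\Sigma'$ (and refines itself), so $\Sigma$ serves as a common refinement; thus it suffices to show that whenever $\delta(\Sigma,\Sigma')\neq\emptyset$, there is a point $v\in|\Sigma|$ at which the stellar subdivision strictly decreases the defect. Iterating then yields the desired common refinement after finitely many steps, and the resulting fan is reachable from $\Sigma$ by stellar subdivisions by construction.

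To locate $v$, I would select a cone $\sigma\in\Sigma$ contributing to the defect of \emph{minimum} dimension, so that every proper face of $\sigma$ lies in some cone of $\Sigma'$ while $\sigma$ itself does not. This forces $\sigma^\circ$ to meet the supporting hyperplane $H$ of some codimension-one cone $\pi'\in\Sigma'$ separating two cones $\sigma'_A,\sigma'_B\in\Sigma'$; otherwise $\sigma$ would fit inside the intersection of the closed halfspaces defined by the cones of $\Sigma'$ containing its faces, contradicting badness. Choose $v\in\sigma^\circ\cap H$ and perform the stellar subdivision $\Sigma_v$. The cone $\sigma$ is eliminated and replaced by cones of the form $\tau_v$ for proper faces $\tau\prec\sigma$ with $v\notin\tau$; because $v\in H$, each such $\tau_v$ lies entirely on one side of $H$ and hence inside either $\sigma'_A$ or $\sigma'_B$ (using minimality of $\sigma$ to conclude $\tau$ itself lies in one of these cones). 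Cones of $\Sigma$ strictly containing $\sigma$ that were already contained in some cone of $\Sigma'$ are subdivided into cones inheriting that containment, and cones of $\Sigma$ disjoint from $\sigma$ are untouched.

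The main obstacle is a rigorous verification that the defect strictly decreases under this operation in the lexicographic order on nonincreasing tuples. The genuinely delicate case is when a proper face $\tau\prec\sigma$ itself straddles the wall $H$---then $\tau_v$ could inherit this straddling and fail to lie in $\sigma'_A$ or $\sigma'_B$. This is circumvented by a secondary argument: such a straddling $\tau$ already contributes to the defect at dimension $\dim\tau<\dim\sigma$, contradicting the choice of $\sigma$ as having minimum contributing dimension, unless $\tau$ happens to lie in a cone of $\Sigma'$ that itself meets both sides of $H$ at $v$, in which case a careful tracking through the containment relations shows that $\tau_v$ remains within that larger cone of $\Sigma'$. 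The remaining bookkeeping---namely that no cone of dimension exceeding the largest entry of the previous defect is created, and that the count at the critical dimension strictly drops---follows from a direct inspection of the cones of $\Sigma_v$, since every new cone has $v$ as a ray and dimension at most one plus the dimension of a proper face of some cone of $\Sigma$ containing $v$.
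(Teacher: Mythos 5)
Your approach is genuinely different from the paper's, and unfortunately it has a gap in the termination argument that you yourself flag as ``the main obstacle'' but do not resolve. The lexicographic defect you define need not decrease under the subdivision you propose. The issue is this: if $\sigma$ is a bad cone of minimum dimension $k$, then \emph{every} cone $\sigma''\in\Sigma$ with $\sigma\preceq\sigma''$ is also bad (a cone containing a bad cone cannot sit inside any cone of $\Sigma'$). When you stellar-subdivide at $v\in\sigma^\circ$, each such $\sigma''$ of dimension $m>k$ is replaced by $k$ new cones of dimension $m$, and there is no reason for any of these to become good, since your choice of $v$ was tailored to $\sigma$, not to $\sigma''$. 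Thus the multiset of bad-cone dimensions can go from, say, $(m,k)$ to $(m,\dots,m,\dots)$ with $k$ copies of $m$ at the front, which is lexicographically \emph{larger}. A secondary gap: even for pieces $\tau_v\subseteq\sigma$, your claim that $\tau_v$ lies on one side of $H$ because $v\in H$ requires that $\tau$ itself lie on one side of $H$, but $\tau$ being contained in \emph{some} cone of $\Sigma'$ does not guarantee this, since that containing cone may well straddle $H$ (the hyperplane spanned by a wall of $\Sigma'$ need not bound every cone of $\Sigma'$). Your parenthetical ``careful tracking through the containment relations'' would need to be made precise, and it is not clear it can be.

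For comparison, the paper's argument avoids these difficulties by working one hyperplane at a time. It observes that $\widetilde\Sigma$ refines $\Sigma'$ as soon as every cone of $\widetilde\Sigma$ lies on one side of each of the finitely many hyperplanes $H$ spanned by the codimension-one cones of $\Sigma'$, and that a simplicial cone lies on one side of $H$ if and only if all of its $2$-dimensional faces do. It then fixes one $H$ and stellar-subdivides along each ray $H\cap\tau$ for $\tau\in\Sigma(2)$ straddling $H$; each such subdivision introduces only $2$-cones with a ray in $H$ (hence not straddling $H$), so the count of straddling $2$-cones drops by one each time. Repeating over all hyperplanes, and noting that a later subdivision never recreates a $2$-cone straddling an already-handled $H$ (new cones are subsets of old ones), yields the desired refinement. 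If you want to rescue a defect-style argument, this suggests the right invariant is something like the number of pairs $(\tau,H)$ with $\tau$ a $2$-cone straddling $H$, rather than a tuple of dimensions of arbitrary bad cones.
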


\begin{proof}
Note that each of the finitely many cones of $\Sigma'$ is an intersection of finitely-many half-spaces, and a fan $\widetilde\Sigma$ with the same support as $\Sigma'$ is a refinement of $\Sigma'$ if each $\tilde\sigma\in\widetilde\Sigma$ lies entirely on one side of each of the hyperplanes associated to the finitely-many half-spaces defining all the cones of $\Sigma'$. Thus, it suffices to prove that, for a given hyperplane $H$, we can find a sequence of stellar subdivisions of $\Sigma$ such that every cone in the resulting fan is contained entirely on one side of $H$. To do this, we simply perform a sequence of stellar subdivisions along every ray that can be obtained as $H\cap\tau$ for some $\tau\in\Sigma(2)$; it is readily verified that the resulting fan satisfies the desired property.
\end{proof}

We are now prepared to prove Theorem~\ref{thm:fansetAF}.

\begin{proof}[Proof of Theorem~\ref{thm:fansetAF}]
By Proposition~\ref{prop:mixeddegrees}, we may choose a simplicial fan $\Sigma_1$ supported on $\cX$ that is $D_i$-admissible for each $i$. Furthermore, knowing that $\bX$ is Lorentzian tells us that there is at least one simplicial, quasiprojective fan $\Sigma_2$ supported on $\cX$. By Lemma~\ref{lem:final2}, we may find a refinement $\Sigma$ of $\Sigma_1$ that is obtained from $\Sigma_2$ by a sequence of stellar subdivisions, and by Lemma~\ref{lem:final1}, we can conclude that $\Sigma$ is simplicial and quasiprojective, and furthermore, since $\Sigma$ refines $\Sigma_1$, it follows that $\Sigma$ is $D_i$-admissible for each $i$. By Proposition~\ref{prop:convexeverywhere}, each $D_i$ is convex on $\Sigma$. Let $\omega\in MW_d(\Sigma)$ be the unique Minkowski weight such that $(\Sigma,\omega)$ represents $\bX$. By Theorem~\ref{thm:support}, we conclude that $(\Sigma,\omega)$ is Lorentzian, and the result is now a direct consequence of Propositions~\ref{prop:mixeddegrees} and \ref{prop:tropicalfanAF}.
\end{proof}

\subsection{Alexandrov--Fenchel inequalities for polytopes}

We have seemingly come a long way from the original inspiration for the ideas in this paper---the Alexandrov--Fenchel inqualities for mixed volumes of polytopes---and we now bring the discussion full-circle by briefly outlining how the Alexandrov--Fenchel inequalites sit within this story.

Consider $V=\R^n$ with basis $u_1,\dots,u_n$, and set $u_0=-\sum_{i=1}^n e_i$. Let $(\Sigma,u)$ be the marked complete fan comprised of all cones spanned by proper subsets of $\{u_0,\dots,u_n\}$. Consider the constant function $\omega:\Sigma(n)\rightarrow\{1\}$; it is straightforward to verify that $\omega\in MW_n(\Sigma,u)$ and, moreover, that every Minkowski weight is a scalar multiple of $\omega$. In addition, one readily checks that $(\Sigma,u,\omega)$ is Lorentzian; for example, one can use Theorem~\ref{thm:characterization}, though it can also be checked directly from the definition. Let $\Omega\in MW_n(\R^n)$ be the Minkowski weight represented by $(\Sigma,u,\omega)$. We have argued that $(\R^n,\Omega)$ is a Lorentzian fan variety.

Given any polytope $P\subseteq\R^n$, let $\Sigma_P\subseteq\R^n$ be its (outward) normal fan (with respect to the standard inner product on $\R^n$), and let $u$ be the unit vectors on each ray of $\Sigma_P$. We can write
\[
P=\{v\in\R^n\mid v\cdot u_{\rho}\leq a_\rho\}
\]
for some real numbers $a_\rho$, and we define a corresponding divisor
\[
D_P=\sum_{\rho\in\Sigma(1)}a_\rho D_{u,\rho}\in D(\Sigma_P)\subseteq D(\R^n).
\]
This divisor is convex, more or less by definition of the normal fan, and it turns out that
\begin{equation}\label{eq:volume}
\vol_n(P)=n!\deg_{\R^n,\Omega}((D_P)^n).
\end{equation}
Formula \eqref{eq:volume} is essentially a consequence of an inductive argument that uses \eqref{eq:reduction} to reduce the right-hand side and uses the following standard fact---which, geometrically corresponds to subdividing $P$ into pyramids over its facets $\{F_\rho\mid\rho\in\Sigma(1)\}$---to reduce the left-hand side:
\[
\vol_n(P)=\frac{1}{n}\sum_{\rho\in\Sigma(1)}a_\rho\vol_{n-1}(F_\rho).
\]
From \eqref{eq:volume}, it then follows from multilinearity and symmetry of mixed volumes that, for any polytopes $P_1,\dots,P_n\in\R^n$, the mixed volume $\vol_{n}(P_1\cdots P_n)$ can be computed by
\[
\vol_n(P_1\cdots P_n)=n!\deg_{\R^n,\Omega}(D_{P_1}\cdots D_{P_n}).
\]
Thus, we see that the Alexandrov--Fenchel inqualities for mixed volumes of polytopes
\[
\vol_n(P_1P_2P_3\cdots P_n)^2\geq\vol_n(P_1^2P_3\cdots P_n)\vol_n(P_2^2P_3\cdots P_n)
\]
 are captured by Theorem~\ref{thm:fansetAF} simply by considering the special case of $\bX=(\R^n,\Omega)$. Two interesting aspects of this proof are that (i) it does not require a proof of the two-dimensional Brunn--Minkowski inequalities, and (ii) it does not require one to approximate sets of polytopes with sets of simple strongly isomorphic polytopes.

\bibliographystyle{alpha}
\newcommand{\etalchar}[1]{$^{#1}$}

\end{document}